\documentclass[a4paper,11pt]{amsart}
\usepackage[utf8]{inputenc}
\usepackage[T1]{fontenc}
\usepackage{lmodern}
\usepackage[english]{babel}
\usepackage{microtype}
\usepackage{amsmath,amssymb,amsfonts,amsthm}
\usepackage{cases}
\usepackage{mathtools,accents}
\usepackage{mathrsfs}
\usepackage{aliascnt}
\usepackage{bm}
\usepackage{bbm}
\usepackage{tikz}
\usepackage[citecolor=blue,colorlinks]{hyperref}
\usepackage{verbatim}
\usepackage{enumerate}
\usepackage{xcolor}
\usepackage{tikz-cd}
\usepackage{fullpage}
\usepackage[dvipsnames]{xcolor}

\makeatletter
\g@addto@macro\@floatboxreset\centering
\makeatother

\makeatletter
\def\newaliasedtheorem#1[#2]#3{
  \newaliascnt{#1@alt}{#2}
  \newtheorem{#1}[#1@alt]{#3}
  \expandafter\newcommand\csname #1@altname\endcsname{#3}
}
\makeatother

\newtheorem{theorem}{Theorem}[section]
\newtheorem{lemma}[theorem]{Lemma}

\newtheorem{proposition}[theorem]{Proposition}
\newtheorem{conjecture}[theorem]{Conjecture}
\theoremstyle{definition}
\newtheorem{rmk}[theorem]{Remark}
\newtheorem{remark}[theorem]{Remark}

\theoremstyle{definition}
\newtheorem{definition}[theorem]{Definition}

\newtheorem{example}[theorem]{Example}

 \newtheorem*{BHUconjI}{Riemannian Black Hole Uniqueness Conjecture}

\numberwithin{equation}{section}

\newtheorem{question}{Question}
\newtheorem{problem}[question]{Problem}
\newtheorem{conj}[question]{Conjecture}

\newcommand{\p}{\partial}
\newcommand{\bp}{\bar\partial}

\newcommand{\bl}{\bm{l}}
\newcommand{\btheta}{\bm{\vartheta}}

\newcommand{\C}{\mathbb{C}}

\newcommand{\dH}{\mathbb{H}}
\newcommand{\R}{\mathbb{R}}

\newcommand{\CP}{\mathbb{CP}}

\newcommand{\dZ}{\mathbb{Z}}
\newcommand{\dS}{\mathbb{S}}

\newcommand{\bom} {{\bm\omega}}

\newcommand{\mv} {\mathfrak v}

\newcommand{\mI} {\mathfrak I}
\newcommand{\mJ} {\mathfrak J}
\newcommand{\mR} {\mathfrak R}

\newcommand{\bH}{\mathbb H}

\DeclareMathOperator{\Aut}{Aut}

\DeclareMathOperator{\W}{W}

\DeclareMathOperator{\dvol}{dvol}

\DeclareMathOperator{\Id}{Id}

  \DeclareMathOperator{\Sc}{Scal}

\DeclareMathOperator{\Ric}{Ricci}
\DeclareMathOperator{\Rm}{Riem}

\DeclareMathOperator{\SO}{SO}

\DeclareMathOperator{\dist}{dist}

\DeclareMathOperator{\OO}{O}
\DeclareMathOperator{\I}{I}
\DeclareMathOperator{\II}{II}
\DeclareMathOperator{\III}{III}
\DeclareMathOperator{\IV}{IV}

\DeclareMathOperator{\Tr}{Tr}
\DeclareMathOperator{\tr}{tr}

\DeclareMathOperator{\Vol}{Vol}

\DeclareMathOperator{\Ae}{AE}
\DeclareMathOperator{\ALs}{AL\sharp}
\DeclareMathOperator{\GH}{GH}

\DeclareMathOperator{\ALe}{ALE}
\DeclareMathOperator{\ALF}{ALF}
\DeclareMathOperator{\ALG}{ALG}
\DeclareMathOperator{\ALH}{ALH}

\DeclareMathOperator{\EH}{EH}

\DeclareMathOperator{\ALE}{ALE}

\DeclareMathOperator{\ALFp}{ALF^+}
\DeclareMathOperator{\ALFm}{ALF^{--}}

\DeclareMathOperator{\AF}{AF}
\DeclareMathOperator{\AFa}{AF_\beta}

\usetikzlibrary{decorations.pathreplacing}

\title{Geometry of Gravitational instantons
}

\author{
Mingyang Li\textsuperscript{*}}
\thanks{\textsuperscript{*} Simons Center for Geometry and Physics, Stony Brook University; \url{mingyang.li@scgp.stonybrook.edu}}
\author{Song Sun\textsuperscript{\dag}}
\thanks{\dag\  Institute for Advanced Study in Mathematics, Zhejiang University; \url{songsun@zju.edu.cn}}

\dedicatory{Dedicated to Professor  Claude LeBrun on the occasion of his seventieth birthday}

\begin{document}
\begin{abstract}
We survey recent progress in the classification of hyperk\"ahler and Hermitian gravitational instantons (i.e., complete noncompact 4 dimensional Ricci-flat manifolds with quadratic curvature decay) , as well as the construction of non-Hermitian gravitational instantons via harmonic maps. We also present a list of open questions related to gravitational instantons.
\end{abstract}

\maketitle{}

\tableofcontents{}
\section{Introduction}
Gravitational instantons were first introduced by S. Hawking \cite{Hawking} in the study of Euclidean quantum gravity. In \cite{Hawking} they were roughly described as ``solutions of the classical Einstein equations which are non-singular on some section of complexified spacetime and in which the curvature dies away at large distances''.
In the literature, there does not seem to be a consistent definition of gravitational instantons. To fix terminology, we will use the following definition in this paper.

\begin{definition}
    A gravitational instanton is a complete, noncompact, four-dimensional oriented Riemannian manifold $(M, g)$ that satisfies the following 
    \begin{itemize}
        \item Ricci-flat: $\Ric_g\equiv 0$; 
        \item quadratic curvature decay: \begin{equation}\label{e:finite energy} |\Rm_g|=O(r^{-2}).
        \end{equation}
    \end{itemize}
\end{definition}
\begin{rmk}
\

\begin{itemize}
    \item In our definition, gravitational instantons are assumed to be \emph{oriented}. This will be convenient when discussing the \emph{Types} of gravitational instantons below.
    \item The quadratic curvature decay condition is equivalent to that $|\Rm_g|$ is square integrable, see Remark \ref{rmk-finite energy}. 
    \item In a large body of literature, it is implicitly assumed that the metric is \emph{hyperk\"ahler}, which means that the holonomy group of $g$ is contained in $SU(2)$. In our definition, we do not assume this extra property. Instead, hyperk\"ahler gravitational instantons are treated as special cases of Type $\I$ gravitational instantons. 
\end{itemize}
\end{rmk}
Gravitational instantons are  examples of Riemannian Einstein metrics. Notice that in dimensions less than 4, an Einstein metric must have constant sectional curvature, and hence is locally given by the standard space forms. This fact is related to the classical uniformization theorem in dimension 2. It also underpins the development of Ricci flow as a tool to study the geometry of 3-manifolds. In dimensions 4 and above, Einstein metrics are more flexible and they may form \emph{singularities} in families. Gravitational instantons provide local singularity models for Einstein metrics in dimension 4, thus they are expected to play an important role in the understanding of 4-dimensional geometry.

Gravitational instantons have been extensively studied in both mathematics and physics. Two fundamental questions in this area involve the \emph{construction} and \emph{classification} of gravitational instantons. These boil down to understanding the geometry of a difficult nonlinear PDE system. The classification problem can be further divided into two questions.

\

	\textbf{(A)} Classify the asymptotic structures of gravitational instantons.

    \
    
    This divides gravitational instantons into \emph{discrete} families, each characterized by distinct asymptotic behavior. An intriguing feature in the study of gravitational instantons is the existence of many different model ends, such as \(\ALE\), \(\AF\), etc.

    Given a \(\sharp\) model end \((M_0, g_0)\), we say that a gravitational instanton \((M, g)\) is \(\sharp\) if there exists a compact set \(K \subset M\) such that \(M \setminus K\) can be smoothly identified with \(M_0\), and
\[
\sup_{M \setminus K} \bigl| \nabla_{g_0}^k (g - g_0) \bigr|_{g_0} \leq C_k \, \rho^{-k-\epsilon},
\]
where \(\rho\) denotes the distance function with respect to \(g_0\).

\

	\textbf{(B)} Classify gravitational instantons with a given asymptotic structure. 
    
    \

    This involves understanding the \emph{bulk} geometry of gravitational instantons. Gravitational instantons with a given asymptotic structure can form a \emph{continuous} moduli space.

\

As we will explain in Section \ref{s-3}, we can divide gravitational instantons into three Types. 
\begin{enumerate}
	\item Type $\I$: locally (hyper-)K\"ahler ($\W^+\equiv 0$, i.e., anti-self-dual).
	\item Type $\II$: locally Hermitian but not locally K\"ahler ($\W^+$ has two distinct eigenvalues everywhere).
	\item Type $\III$: not locally Hermitian ($\W^+$ has three distinct eigenvalues generically).
\end{enumerate}
 Type $\I$ and Type $\II$ gravitational instantons are intimately related to complex geometry and can be viewed as gravitational instantons equipped with special structures. On the other hand, most known examples of gravitational instantons are all of Type $\I$ or $\II$.

\

A major recent achievement in this field is the essential classification of hyperk\"ahler and Hermitian gravitational instantons due to the work by many people in the past decade or so. These involve intriguing interactions between Riemannian geometry, complex geometry, PDEs, and also ideas from physics. On the other hand, systematic examples of Type III gravitational instanton have only been constructed very recently. In this article we will survey these  progress and discuss some open questions for future study.

\

{\bf Acknowledgments:}
The authors have benefited from discussions and collaborations with many colleagues on this topic over the past few years, especially Lars Andersson, Gao Chen, Simon Donaldson, Lorenzo Foscolo, Hans-Joachim Hein, Shaosai Huang, Marcus Khuri, Claude LeBrun, Hongyi Liu, John Lott, Jeff Viaclovsky, Gilbert Weinstein, and Ruobing Zhang.  They are also grateful for the invitation to the program ``Einstein 4-Manifolds and Gravitational Instantons'' at the Simons Center for Geometry and Physics in January 2026, where part of this article is written.

\section{General Riemannian geometric consequences}
\label{s-2}
Let $(M, g)$ be a gravitational instanton. Since it has vanishing Ricci curvature, one may deduce some basic properties of $(M, g)$ by appealing to general theory on complete Riemannian manifolds with non-negative Ricci curvature. 

Since $g$ is not flat, by the  Cheeger-Gromoll splitting theorem we know that $M$ has exactly one end.
Next, it follows from the  Bishop inequality that  for any $p\in M$, the ratio $$\nu(g; r):=\Vol(B(p,r))/\omega_4r^4$$ is a non-increasing function of $r\in (0, \infty)$, where $\omega_4$ denotes the volume of a unit ball in $\R^4$. 
The limit $$\nu_\infty:=\lim_{r\rightarrow\infty} \nu(g;r)\in [0, 1]$$ is
independent of $p$.
In particular,  for all $r>0$ we have $$\Vol(B(p,r))\leq \nu_\infty \omega_4 r^4.$$
Furthermore,   $\nu_\infty=1$ if and only if $(M, g)$ is isometric to the standard flat $\R^4$.

In general, by Gromov’s precompactness theorem, one obtains \emph{asymptotic cones} of $(M, g)$ at infinity.

\begin{definition}[Asymptotic cones]
   An asymptotic cone $(\mathcal C, O)$ is a pointed Gromov-Hausdorff limit of rescaled spaces $(M, \lambda_j^2g, p)$ along a sequence $\lambda_j\rightarrow0$.
\end{definition}
By assumption, the rescaled sequence has locally uniformly bounded curvature away from $O$. Note, however, that \emph{a priori} the limit may depend on the choice of the sequence \(\{\lambda_j\}\), so the asymptotic cone need not be unique. Moreover, a priori, an asymptotic cone is not known to be a genuine metric cone. These are the central difficulties involved in the classification of asymptotic structures of gravitational instantons; they are both confirmed in the special case of hyperk\"ahler and Hermitian gravitational instantons, see Theorem \ref{thm:SZ} and Theorem \ref{thm:Hermitian A}. 

\

We say that $(M, g)$ is \emph{non-collapsed} if $\nu_\infty>0$. This means that for a fixed $p\in M$, the unit ball around $p$ with respect to the rescaled metric $$g_{\lambda}:=\lambda^{-2}g$$ has volume bounded below by $c>0$ (i.e., the  volume  is non-collapsing) for all $\lambda\geq0$.
 In this case, it follows from the work of Bando-Kasue-Nakajima \cite{BKN} that $(M, g)$ is $\ALE$ (Asymptotically Locally Euclidean), i.e., there is a flat orbifold $(\R^4/\Gamma, g_0)$ for some $\Gamma\subset SO(4)$, whose end can be smoothly identified with $M\setminus K$ for some compact set $K$, and there is a $\delta>0$ such that for all $k\in\mathbb{Z}_{\geq0}$, we have
$$|\nabla^k_{g_0}(g-g_0)|_{g_0}\leq C_k r^{-\delta-k}$$
for some $C_k>0$. Here $r=d(p, \cdot)$ denotes the distance from $p$ on $M$, and it  is comparable to the standard radial function on $\R^4/\Gamma$. In particular, $(M, g)$ has  a \emph{unique} asymptotic cone, given by $\R^4/\Gamma$. 

At the other extreme,  a result of Calabi and Yau \cite{Calabi1970RicciII,Yau1976FunctionTheory} shows that $g$ has at least linear volume growth, that is,  there exists  $c>0$ such that $$\Vol(B(p, r))\geq cr \ \text{for all} \  r\geq 1.$$
 There is a class of gravitational instantons, namely \emph{$\ALH$} gravitational instantons (see Section \ref{ss:asymptotic models hyperkahler instantons}), which has precisely linear volume growth.


\

There are some general topological constraints on \(M\) arising from the fact that \(g\) has nonnegative Ricci curvature. A result of Anderson \cite{Anderson1990TopologyRicci} implies that
\[
b_1(M) \leq 3,
\]
and that \(M\) has a finite fundamental group if \((M, g)\) is non-collapsed. A result of Shen–Sormani \cite{ShenSormani} implies that
\[
b_3(M) = 0.
\]

 If $M$ is non-collapsed, then it has finite topological type (i.e., $M$ is diffeomorphic to the interior of a manifold with boundary), since it is $\ALE$. When $M$ is collapsed, then by \cite{CFG} the end of $M$ admits an $\mathcal N$ (i.e., nilpotent)-structure. Roughly speaking, the latter means that, locally around each point, up to a finite cover there is an effective action of a nilpotent Lie group, which satisfies suitable compatibility conditions.

\begin{lemma}\label{lem:Gauss-Bonnet}
$(M, g)$ has finite curvature energy, i.e., 
        \begin{equation}
             \int_ M |\Rm_g|^2\dvol_g<\infty.
        \end{equation} Furthermore, there is an exhaustion of $M$ by bounded open domains $\Omega_R(R\gg1)$ with smooth boundary, such that
        \begin{equation}\label{eqn:formula euler number}
     \chi(\Omega_R)=\frac{1}{8\pi^2}
        \int_ M |\Rm_g|^2\dvol_g+\delta(M),
        \end{equation}
        where $\delta(M)=\frac{1}{|\Gamma|}$ if $M$ is  $\ALe$ with asymptotic cone $\R^4/\Gamma$ and $\delta(M)=0$ if $M$ is collapsed. 
    \begin{proof}
 We will apply the Gauss-Bonnet-Chern theorem which, in our setting, asserts that for any domain $\Omega\subset M$ with smooth boundary $\p\Omega$, we have 
 \begin{equation}
 \label{e:GBC1}
\chi(\Omega) = \frac{1}{8\pi^2 }\int_{\Omega} |\Rm_h|^2\dvol_h + \int_{\p \Omega} TP_{\chi}.	
\end{equation}
The  transgression form  is given by
\begin{equation}\label{e:GBC2}
TP_{\chi} = \frac{1}{4\pi^2} \cdot \Big( -  R_{ikkj}\cdot \II_{ij} + \frac{1}{3}H^3 + \frac{2}{3}\Tr(\II^3)- H\cdot |\II|^2\Big) \dvol_{\p \Omega},	
\end{equation}
where $\II$ and $H$ denote the second fundamental form and the mean curvature of $\p \Omega$, respectively, and $i$, $j$, $k$ are in the tangential direction of $\p \Omega$.

    If $(M, g)$ is non-collapsed, then we know it is $\ALE$ by \cite{BKN}. Identifying the end of $M$ with that of $\mathbb R^4/\Gamma$ as above, for $R\gg1$ we may choose $\Omega_R\subset N$ such that $\p\Omega_R$ is $S^3_R/\Gamma\subset \mathbb R^4/\Gamma$, where $S^3_R$ denotes the standard round sphere of radius $R$. Clearly $\chi(\Omega_R)$ is independent of $R\gg1$. Applying \eqref{e:GBC1} to $\Omega_R$ and letting $R\rightarrow\infty$, we see that $g$ has finite curvature energy and the boundary term limits to $1/|\Gamma|$, \eqref{eqn:formula euler number} follows. Indeed, since $M$ has finite topological type in this case, $\chi(\Omega_R)=\chi(M)$.     
    
    If $(M, g)$ is collapsed, using the $\mathcal N$-structure, we can exhaust $M$ by bounded open domains $\Omega_R$ with $\chi(\Omega_R)$ independent of $R$ for $R\gg1$. By applying the chopping theorem of Cheeger-Gromov \cite{CheegerGromov1985Chopping},  we may also assume that $\p \Omega_R$ has uniformly bounded second fundamental form  and $\Vol(\p \Omega_R)\rightarrow 0$ as $R\rightarrow\infty$. We then apply \eqref{e:GBC1} to $\Omega_R$ and let $R\rightarrow\infty$. In this case the boundary term limits to zero, so \eqref{eqn:formula euler number} holds. 
    \end{proof}
\end{lemma}

\begin{remark}\label{rmk-finite energy}
    Although not necessary for the analysis of gravitational instantons in this article, the quadratic curvature decay assumption \eqref{e:finite energy} is indeed equivalent to that $(M, g)$ has finite curvature energy. This follows from Lemma \ref{lem:Gauss-Bonnet} and the $\epsilon$-regularity theorem of Cheeger-Tian \cite{CT}.
\end{remark}

\section{Riemannian geometry in dimension four}
\label{s-3}
Recall that the Riemann curvature tensor of a general Riemannian metric $g$ can be viewed as a self-adjoint endomorphism  of the bundle of 2-forms
$$\Rm: \Lambda^2\rightarrow \Lambda^2.$$ The standard representation theory of $SO(n)$ decomposes $\Rm$ into the sum of three components: one involving the scalar curvature $S$, one involving the trace-free Ricci curvature $\overset{\circ}{\Ric}$, and one involving the Weyl curvature $W$. In dimension four, since $SO(4)$ is a double cover of the product $SO(3)\times SO(3)$, we obtain a refined decomposition. Assuming $M$ is oriented,  the bundle $\Lambda^2$  splits as the direct sum of the bundle of self-dual and anti-self-dual 2-forms $$\Lambda^2=\Lambda^+\oplus \Lambda^-.$$
The unit sphere in $\Lambda_p^+$ can be identified with the set of orthogonal almost complex structures on $T_pM$ that are compatible with the given orientation.

Accordingly, we may write $\Rm$ as 
\begin{equation*}
\Rm=\begin{pmatrix}
  \W^++\frac{\Sc}{12} & \overset{\circ}{\Ric}\\ 
  \overset{\circ}{\Ric} & \W^-+\frac{\Sc}{12}
\end{pmatrix}.
\end{equation*} 
  At each point we may view $\W^+$ as a trace-free symmetric  $3\times 3$ matrix.  

\ 

For convenience of discussion, we divide 4-dimensional oriented Ricci-flat metrics into three classes.

\begin{enumerate}
	\item Type $\I$: $\W^+\equiv0$ (i.e.,  $g$ is anti-self-dual).  
	\item Type $\II$: $\W^+$ has exactly one eigenvalue of multiplicity two everywhere.  
	\item Type $\III$: $\W^+$ has three distinct eigenvalues generically.
\end{enumerate}

It is not completely obvious that there are exactly three Types as above. The non-trivial point is that if $\W^+$ has exactly two distinct eigenvalues on an open domain, then this must hold  everywhere. This was proved in \cite{Der}.  Using the terminology originating in the
Petrov classification of algebraic types, the Type $\I$, $\II$, $\III$ cases are also called Types $O, D, I$ respectively. Sometimes, Type $\III$ is also called \emph{algebraically general}, while Type $\I$ and $\II$ are called \emph{algebraically special}.

As we shall explain below, in the first two cases, the Ricci-flat metrics are closely related to complex geometry. This fact leads to many deep results for Type $\I$ and $\II$ gravitational instantons. At present,  we have a relatively satisfactory theory in these two cases.

\subsection{Type \texorpdfstring{$\I$}{I} case}
A Ricci-flat metric $g$ satisfies $\W^+=0$ if and only if the induced Levi-Civita connection on $\Lambda^+$ is flat. In this case, the metric $g$ has local holonomy contained in $SU(2)\subset SO(4)$. Note that under the double cover map $$SO(4)\rightarrow SO(3)\times SO(3),$$ $SU(2)$ maps onto one of the $SO(3)$ factors.  Having local holonomy contained in $SU(2)$ is the same as saying that $g$ is locally  \emph{hyperk\"ahler}. Namely, locally there is a triple of orthogonal complex structures $J_\alpha \ (\alpha=1, 2, 3)$ satisfying the quaternion relations  $$J_1J_2=-J_2J_1=J_3$$ and satisfying $$\nabla_g J_\alpha=0.$$
The set of parallel compatible complex structures is naturally parametrized by the unit 2-sphere $\mathbb S^2$: for each $\bm u=(u_1, u_2, u_3)\in \mathbb S^2$, we  associate the complex structure 
$$J_{\bm u}:=u_1J_1+u_2J_2+u_3J_3.$$
Locally, there is a triple of parallel (hence closed) 2-forms $\bom=\{\omega_{\alpha}|\alpha=1, 2, 3\}$ (where $\omega_\alpha=g(J_\alpha, \cdot)$), which gives rise to an orthonormal basis of $\Lambda^+$ at each point, i.e., 
\begin{equation}\label{eqn:hyperkahler}\frac{1}{2}\omega_\alpha\wedge \omega_\beta=\delta_{\alpha\beta} \cdot \mu
\end{equation}
where $\mu=\dvol_g$. Conversely, it is also straightfoward to see that a locally hyperk\"ahler metric $g$ is naturally a Ricci-flat metric of Type $\I$ with respect to the orientation defined by the complex structures $J_\alpha$. 

There is a characterization of the hyperk\"ahler condition in terms of a differential system involving only the triple $\bom$.  Suppose  that on a 4-manifold, we are given a triple of closed 2-forms $\bom=\{\omega_1, \omega_2,\omega_3\}$ and a volume form $\mu$ such that \eqref{eqn:hyperkahler} holds, then there is a unique hyperk\"ahler metric $g$ such that $\bom$ arises from the above construction.   Such a triple $\bom$ is usually referred to as a \emph{hyperk\"ahler triple}.

If we fix a choice of the complex structure, say $J_1$, then we can interpret the hyperk\"ahler condition in terms of a complex Monge-Amp\`ere equation. This established a connection with complex geometry. We may fix a K\"ahler form $\omega:=\omega_1$ and a holomorphic 2-form $\Omega:=\omega_2+\sqrt{-1}\omega_3$, and they satisfy the equation 
\begin{equation}\label{eqn:complex MA}\omega^2=\frac{1}{2}\Omega\wedge\bar\Omega.
\end{equation}
From this one can also see that a 4-dimensional Ricci-flat metric is locally hyperk\"ahler if and only if it is locally K\"ahler. 

In practice, it is often straightforward to write down the holomorphic 2-form  $\Omega$ using complex/algebraic geometry and obtaining a hyperk\"ahler triple reduces to finding $\omega$. Locally, $\omega$ has a K\"ahler potential $\varphi$, which solves a Monge-Amp\`ere type PDE.  On compact manifolds, the existence of K\"ahler-Einstein metrics is completely understood in terms of algebraic geometric data, due to the work of Yau \cite{Yau}, Aubin \cite{Aubin},  and Chen-Donaldson-Sun \cite{CDS}. In the noncompact setting, the connection to complex algebraic geometry is also important in the construction and classification of Type $\I$ gravitational instantons.

\subsection{Type  \texorpdfstring{$\II$}{II} case}\label{ss:Type II case}

Now suppose that $g$ is  a Ricci-flat metric of Type $\II$. Locally, we can choose a self-dual 2-form $\omega$, which is the eigen-form associated to the non-repeated eigenvalue of $\W^+$ and is normalized to have unit norm. Then $\omega$ defines an orthogonal almost complex structure $J$ such that $\omega=g(J\cdot, \cdot)$. A crucial observation by Derdzi\'nski \cite{Der} is that $J$ is \emph{integrable}. Furthermore, if we denote  $$\lambda:=2\sqrt{6}|\W^+|_g,$$ then the conformal metric
$$g_K:=\lambda^{2/3}g$$ 
is K\"ahler with respect to $J$. Since $g$ is Ricci-flat, $g_K$ has vanishing Bach tensor. In particular, $g_K$  is \emph{extremal} in the sense of Calabi as observed by LeBrun \cite{LeBrunEinsteinMetricsComplexSurfaces}, i.e., 
    $$\mathbb K:=J\nabla_{g_K}\Sc_{g_K}$$ 
is a non-zero Hamiltonian Killing field, where  the scalar curvature of $g_K$ satisfies $$|\Sc_{g_K}|=\lambda^{1/3}.$$ It then follows that $$\mathbb K=-J\nabla_g \lambda^{-1/3}$$ is also a Killing field for the original Ricci-flat metric $g$. 
In particular, a Type $\II$ Ricci-flat metric is \emph{locally} conformal to a K\"ahler metric and admits non-trivial continuous symmetries. Notice that the choice of $\omega$ is determined up to sign, so, after passing to a double cover if necessary, we may assume $g$ is \emph{globally} conformal to a  K\"ahler metric.

Conversely, note that for a K\"ahler metric in 4 dimensions, $\W^+$ always has a repeated eigenvalue $-\frac{\Sc}{12}$, and the Weyl curvature is conformally invariant. Therefore, if a Ricci-flat metric is locally conformal to a K\"ahler metric (for some complex structure), then it must be either Type $\I$ or Type $\II$. Moreover, by the Riemannian Goldberg-Sachs theorem \cite{PrzanowskiBroda1983LocallyKahler}, if a Ricci-flat metric is Hermitian with respect to an integrable almost complex structure, then it is either Type $\I$ or Type $\II$. It is Type $\I$ precisely when $W^+=0$.

\ 

For simplicity of discussion, we introduce the following terminologies in this paper 

\begin{definition}
    A 4-dimensional oriented Ricci-flat metric $g$ is called: 
    \begin{itemize}
        \item \emph{K\"ahler} if there is an orthogonal complex structure $J$ compatible with the orientation such that $\nabla_{J}g=0$;
        \item \emph{Hermitian} (or \emph{conformally K\"ahler}) if it is not K\"ahler and is conformal to a K\"ahler metric.
    \end{itemize}
\end{definition}

The above discussion then implies that being K\"ahler implies Type $\I$, and being Hermitian implies Type $\II$. Conversely, for simply-connected manifolds, Type $\I$ implies being K\"ahler, and Type $\II$ implies being Hermitian.

\section{Type \texorpdfstring{$\I$}{I} gravitational instantons}
In this section, we discuss Type $\I$ gravitational instantons. Notice that a Ricci-flat metric of Type $\I$ is in general \emph{not} globally hyperk\"ahler. It is the case when the manifold is simply-connected. 
 We  focus primarily on the case of  hyperk\"ahler gravitational instantons and  briefly discuss the general Type $\I$ case in Section \ref{ss:general Type I}.

\subsection{Gibbons-Hawking ansatz}
The ansatz of Gibbons-Hawking \cite{GH78} provides an explicit construction of 4-dimensional hyperk\"ahler metrics admitting a continuous symmetry. It also plays an important role in describing  model ends for gravitational instantons.

Let $V$ be a positive harmonic function defined on an open set $Q\subset \mathbb R^3$. If $H_2(Q; \mathbb R)=0$, then we can find a 1-form $\theta$ such that $$*dV=d\theta,$$ where $*$ denotes the Hodge star operator on $\R^3$. Consider the product space $P=Q\times \mathbb S^1$  equipped with the Riemannian metric  $g$ given by 
\begin{equation}g=V\sum_{\alpha=1}^3dx_\alpha^{2}+V^{-1}(dt+\theta)^{ 2}, 
\end{equation}
where $x_\alpha$'s are standard coordinates on $\mathbb R^3$, and $t$ denotes the standard coordinate on $\mathbb S^1=\mathbb R/\mathbb Z$.  Clearly $g$ is invariant under the obvious free $\mathbb S^1$ rotation in the $\mathbb S^1$ factor.

It is straightforward to check that $g$ is a hyperk\"ahler metric. For example, one can write down an $\mathbb S^1$-invariant triple of closed self-dual 2-forms $\bom=(\omega_1, \omega_2, \omega_3)$, by setting
\begin{align*}
   \omega_1=Vdx_1\wedge dx_2+dx_2\wedge \theta,\\
    \omega_2=Vdx_2\wedge dx_3+dx_1\wedge \theta, \\
    \omega_3=Vdx_3\wedge dx_1+dx_3\wedge \theta.
    \end{align*}

Conversely, if we are given a 4-dimensional hyperk\"ahler metric $g$ with a non-vanishing Killing field $\xi$, which is tri-holomorphic (i.e., holomorphic with respect to all the complex structures $J_\alpha$), then locally $g$ must arise from the above construction. Indeed, one can recover the coordinate $x_\alpha$ as  moment maps with respect to the  hyperk\"ahler form $\omega_\alpha$ and recover the function $V^{-\frac{1}{2}}$ as the length of $\xi$ and $\theta$ as the metric dual 1-form of $\xi$. The key point is that the presence of an $\mathbb S^1$ symmetry allows us to reduce the  nonlinear PDE governing the hyperk\"ahler structure to the Laplacian equation on $\mathbb R^3$, which is a simple \emph{linear} PDE. 

There are certain variants of the above construction which will be important later. First, we can let $Q$ be a domain in a general parallelizable flat 3 manifold and we may allow $Q$ to have non-trivial topology. The above construction still applies if we can choose a global parallel orthonormal frame of $Q$ provided the function $V$ satisfies the  integrality condition   that
\begin{equation}\label{eqn:integrality condition}\frac{1}{2\pi}\int_{C}  *dV\in \mathbb Z, \ \text{for all 2-cycles}\  C\in H_2(U;\mathbb Z).
\end{equation} 
In this case, we may replace $P$ by a principal $U(1)$ bundle $$\pi: P\rightarrow Q,$$ and replace the 1-form  $dt+\theta$  by a 1-form $\Theta$ on $P$ such that $-\sqrt{-1}\Theta$ is a connection 1-form whose curvature is $*dV$.  One can check that gauge equivalent connections lead to
isometric metrics; thus, $g$ depends only on the gauge equivalence class of $\Theta$. Different gauge equivalence classes differ by tensoring with a flat connection, and the set of isomorphism classes of
flat connections is given by $H^1(Q;\R)/H^1(Q; \mathbb Z)$.

Next, we want to include the case when the $\mathbb S^1$ action has fixed points. For this purpose we need to allow $V$ to be $+\infty$ on a discrete subset of $Q$. By B\^ocher's theorem, locally $V$ must be of the form $v+br^{-1}$ for a smooth harmonic function $v$ across the singularity and  a positive number $b$, where $r$ is the local distance function on the flat $\R^3$. Away from the singularity, for $V$ to satisfy the integrality condition \eqref{eqn:integrality condition},  $b$ must be of the form $\frac{k}{2}$ for some non-negative integer $k$. 

 In the simplest situation we can take $Q=\mathbb R^3$ and $$V=\frac{1}{2r}.$$ Then we recover the standard flat hyperk\"ahler metric on $\R^4$. Fixing a choice of a compatible complex structure we may identify $\R^4$ with $\C^2$, then the $\mathbb S^1$ action is given by \begin{equation}\label{eqn:S1 action}\lambda.(z_1, z_2)=(\lambda^{-1} z_1, \lambda z_2).
 \end{equation}The projection map $\pi:\mathbb C^2\rightarrow \mathbb R^3$ in this case is the \emph{Hopf fibration}, which can be explicitly expressed as $$(z_1, z_2)\mapsto (\text{Re}(z_1z_2), \text{Im}(z_1z_2), \frac{1}{2}(|z_1|^2-|z_2|^2)).$$ 
 If we instead take $$V=\frac{k}{2r}, \ \ k\in \mathbb Z_{>0},$$ then we get the flat orbifold $\C^2/\dZ_k$, given by the quotient of $\mathbb C^2$ by an action of $\mathbb  Z_k$ of the form $$(z_1, z_2)\mapsto (\zeta_k z_1, \zeta_k^{-1}z_2).$$
  For a general function $$V=v+\frac{k}{2r}$$ as above,  one obtains an orbifold hyperk\"ahler metric; the metric is smooth if and only if \(k = 1\).

\

Next we explain some important families of hyperk\"ahler metrics obtained using the Gibbons-Hawking ansatz.

\begin{example}[{\bf Taub-NUT metric} \cite{Taub1951, NewmanTamburinoUnti1963, Hawking}] \label{example:1}
The (positively oriented) Taub-NUT gravitational instanton can be obtained by applying the Gibbons-Hawking ansatz to \(Q = \mathbb{R}^3\), with harmonic function
\[
V = \frac{1}{2r} + 1.
\]
Since the difference from the flat model above is only the addition of a constant to \(V\), the underlying smooth manifold is again \(\mathbb{R}^4\), and the projection map \(\pi\) remains the Hopf fibration. A nontrivial fact, due to LeBrun \cite{LeBrunRicciFlatCnNotFlat}, is that for any choice of compatible complex structure, the resulting complex manifold is biholomorphic to the standard \(\mathbb{C}^2\), with the \(\mathbb{S}^1\)-action given by the standard one in \eqref{eqn:S1 action}.

However, the geometry of the Taub-NUT metric is quite different from that of the flat metric. For example, near infinity, the Hopf fibers have uniformly bounded diameter. As a consequence, the metric has \emph{cubic} volume growth. It is also easy to see that the Taub-NUT metric has a unique asymptotic cone at infinity, namely the flat \(\mathbb{R}^3\).

The Taub-NUT metric is a complete Ricci-flat metric of cohomogeneity one under the action of \(SU(2)\). In this form, it can be written explicitly as
\[
g = \frac{\rho + 1}{4\rho}\, d\rho^{2}
    + \rho(1+\rho)\, (\sigma_{1}^{2} + \sigma_{2}^{2})
    + \frac{\rho}{\rho + 1}\, \sigma_{3}^{2},
\]
where \(\sigma_1, \sigma_2, \sigma_3\) form a left-invariant coframe on \(SU(2)\), and \(\rho \in [0,\infty)\).

\end{example}

\begin{example}[{\bf Calabi-Eguchi-Hanson metric} \cite{EguchiHanson1978, Calabi1979}]\label{example:2}
    The (positively oriented) Calabi-Eguchi-Hanson gravitational instanton can be obtained by applying the Gibbons-Hawking ansatz to $Q=\mathbb R^3$, with the harmonic function
        $$V=\frac{1}{2|\bm x-\bm x_1|}+\frac{1}{2|\bm x-\bm x_2|},$$
    for $2$ distinct points $\bm x_1, \bm x_2$ in $\mathbb R^3$. The $\mathbb S^1$ action now has two fixed points. The space $P$ contains a totally geodesic embedded 2-sphere $S$ with self-intersection $-2$, which is given by the inverse image under $\pi$ of the line segment connecting $\bm x_1$ and $\bm x_2$. Indeed, $P$ can be seen to be diffeomorphic to  the total space $T^*S$, but the underlying complex manifold depends on the choice of the compatible complex structure $J_{\bm u}$. If $\bm u$ is not parallel to the segment $\bm x_1 \bm x_2$, then  $P$ is bi-holomorphic to the affine hypersurface $\{z_1^2+z_2^2+z_3^2=1\}$ in $\C^3$. If $\bm u$ is parallel to $\bm x_1\bm x_2$, $S$ becomes a complex submanifold and  $P$ is bi-holomorphic to the total space of the holomorphic line bundle $\mathcal O(-2)$ over $\mathbb C\mathbb P^1$. In the latter complex structure, the construction is a special case of a more general \emph{Calabi ansatz}. 

  The Calabi-Eguchi-Hanson metric is an \(\ALE\) gravitational instanton, with asymptotic cone \(\mathbb{C}^2/\mathbb{Z}_2\). This follows from the observation that for \(\bm{x}\) large, the function \(V\) is approximately \(\frac{2}{2r}\). 
From another point of view, if we allow the points \(\bm{x}_1\) and \(\bm{x}_2\) to vary, we obtain a family of hyperk\"ahler metrics. As \(\bm{x}_1\) and \(\bm{x}_2\) coalesce, the 2-sphere \(S\) collapses and the metric develops a singularity; the limiting space is the flat cone \(\mathbb{C}^2/\mathbb{Z}_2\).

    Similar to the Taub-NUT metric, the Calabi-Eguchi-Hanson metric is also of cohomogeneity one with respect to the group $SU(2)$. It has an explicit expression given by  \[
        g
= \frac{dr^2}{1 - \frac{a^4}{r^4}}
+ \frac{r^2}{4} \left(1 - \frac{a^4}{r^4}\right) \sigma_3^2
+ \frac{r^2}{4} \left( \sigma_1^2 + \sigma_2^2 \right),
\]
where \(r \geq a\) and $a>0$ is a constant.
\end{example}

\begin{example}\label{example:3}
 We can combine and generalize the above two examples as follows. Let \(Q = \mathbb{R}^3\), and choose \(k \geq 2\) distinct points \(\bm{x}_1, \cdots, \bm{x}_k \in \mathbb{R}^3\). For \(T \geq 0\), define
\[
V = \sum_{j=1}^{k} \frac{1}{2|\bm{x} - \bm{x}_j|} + T.
\]
Applying the Gibbons-Hawking ansatz to \(V\) yields a hyperk\"ahler gravitational instanton.

Each line segment \(\bm{x}_i \bm{x}_j\) gives rise to a 2-sphere in \(P\). Indeed, one can show that \(P\) is diffeomorphic to a smooth affine hypersurface of the form
\[
\{z_1^2 + z_2^2 + f_k(z_3) = 0\},
\]
where \(f_k\) is a generic polynomial of degree \(k\). The underlying complex manifold is more subtle to describe. For a complex structure \(J_{\bm{u}}\), whenever the segment \(\bm{x}_i \bm{x}_j\) is parallel to \(\bm{u}\), the corresponding 2-sphere becomes holomorphic with respect to \(J_{\bm{u}}\).

The asymptotic geometry of the resulting hyperk\"ahler metric depends on \(T\). If \(T = 0\), then for \(\bm{x}\) large, \(V\) is approximately \(\frac{k}{2r}\), so the metric is \(\ALE\) with asymptotic cone \(\mathbb{C}^2/\mathbb{Z}_k\). 
If \(T > 0\), then for large \(r\), \(V\) is approximately \(\frac{k}{2r} + T\), so the \(\mathbb{S}^1\)-fiber has bounded length and the metric is asymptotic to a \(\mathbb{Z}_k\)-quotient of the Taub-NUT metric. In this case, the metric has cubic volume growth and a unique asymptotic cone given by \(\mathbb{R}^3\). These are called the \emph{multi-Taub-NUT metrics}.
\end{example}


\subsection{Asymptotic models}\label{ss:asymptotic models hyperkahler instantons}
We now introduce several families of asymptotic model ends for gravitational instantons. Throughout this section, unless otherwise specified, we focus on the end at infinity of the spaces under consideration.

We begin with the four standard model ends: \(\ALE\), \(\ALF\), \(\ALG\), and \(\ALH\), which are asymptotically locally products of the form \(\mathbb{T}^k \times \mathbb{R}^{4-k}\) for \(k = 0,1,2,3\), respectively. The terminology \(\ALF\) originally stands for ``Asymptotically Locally Flat,'' whereas \(\ALG\) and \(\ALH\) are not derived from abbreviations.

\subsubsection{$\ALE$ models}
For a finite subgroup \(\Gamma \subset SO(4)\) acting freely on \(S^3\), an \(\ALE\) (asymptotically locally Euclidean)-\(\Gamma\) model end is given by the flat cone \(\mathbb{R}^4/\Gamma\). Such finite subgroups \(\Gamma\) have been completely classified. 

After identifying \(\mathbb{R}^4 \simeq \mathbb{C}^2\), we may assume that \(\Gamma\) is a subgroup of \(U(2) \subset SO(4)\). In particular, an \(\ALE\) model end is always K\"ahler with respect to some choice of complex structure. It is hyperk\"ahler if, in addition, \(\Gamma\) can be chosen to lie in \(SU(2)\). In this case, \(\Gamma\) is one of the \emph{ADE} groups, which are classified as follows:
\begin{itemize}
    \item $A_k(k\geq 0)$: this is the cyclic group of order $k+1$, generated by the diagonal matrix 
     \[
\begin{pmatrix}
\zeta_{k+1}& 0 \\
0 & \zeta_{k+1}^{-1}
\end{pmatrix}.
\]
    \item $D_k(k\geq 3)$: this is the binary dihedral group generated by 
    \[
\begin{pmatrix}
\zeta_{k-2}& 0 \\
0 & \zeta_{k-2}^{-1}
\end{pmatrix},  \ \ \begin{pmatrix}
0 & 1 \\
-1 & 0
\end{pmatrix}.
\]
\item $E_k(k=6,7, 8)$: these correspond to the binary tetrahedral, octahedral, icosahedral groups.
\end{itemize}

The complete classification of finite subgroups of $U(2)$ acting freely on $S^3$ can be found in \cite[Theorem 4.11]{Scott1983Geometries}.

Notice that the $\ALE$-$A_k$ model end admits an $\mathbb S^1$ symmetry, so it is given by the Gibbons-Hawking ansatz. Indeed, one can take $Q=\R^3\setminus B_1$ and $$V=\frac{k+1}{2r}.$$

\subsubsection{$\ALF$ models}

An $\ALF$-$A_k\ (k\in \dZ)$ model end  is given by  the Gibbons-Hawking ansatz applied to $Q=\mathbb R^3$ and the harmonic function $$V=\frac{k+1}{2r}+T$$
for some $T>0$. In particular, it is hyperk\"ahler.  When $k\geq 0$, it is given by the $\mathbb Z_{k+1}$ quotient of the Taub-NUT metric.

An $\ALF$-$D_k(k\in \dZ)$ model end is the quotient of an $\ALF$-$A_{2k-5}$ end by a further $\mathbb Z_2$ action. The $\mathbb Z_2$ action can be described in the Gibbons-Hawking ansatz as an involution $\bm x\mapsto -\bm x$ on $\R^3$ and $t\mapsto -t$. An $\ALF$-$D_k$ model end is also hyperk\"ahler.

A general $\ALF$ model end is a finite free isometric quotient of an $\ALF$-$A_k$ end for some $k$. It is not necessarily hyperk\"ahler. 

Note that an \(\ALF\) model end has cubic volume growth. At infinity, it is a locally asymptotically flat \(\mathbb{S}^1\)-fibration. The asymptotic cone is \(\mathbb{R}^3\) in the \(A_k\) case, \(\mathbb{R}^3/\mathbb{Z}_2\) in the \(D_k\) case, and more generally an orbifold of the form \(\mathbb{R}^3/\Gamma\).

\subsubsection{$\ALG$ models} 

For $\beta\in (0, 1]$, take a 2-dimensional flat cone $\mathbb C_\beta$ with angle $2\pi\beta$, the cotangent bundle $T^*\mathbb C_\beta$ admits a canonical flat metric. Indeed, the metric is hyperk\"ahler. Take a lattice sub-bundle, we obtain a flat end, which we denote by $\ALG_\beta$.

An $\ALG_\beta$ model end is always K\"ahler. It  is hyperk\"ahler exactly when the lattice sub-bundle is preserved by the monodromy $M_\beta$, which is the case exactly when $\beta\in \{\frac12, \frac13, \frac23, \frac14, \frac34, \frac16, \frac56, 1\}$ (see for example \cite[Section 3.2]{SZ}).

An $\ALG$ model end has quadratic volume growth. At infinity it is a locally asymptotically flat $T^2$ fibration, and the asymptotic cone is exactly $\mathbb C_\beta$.

\subsubsection{$\ALH$ models}
An $\ALH$ model end is given by  $[0, \infty)\times F$ for a compact flat 3-manifold $F$. It is  hyperk\"ahler if $F$ is a flat torus. 

An $\ALH$ model end has linear volume growth, with asymptotic cone given by the ray $[0, \infty)$.

\

Next we introduce two two exceptional families of hyperk\"ahler model ends. 
They are given by the Gibbons-Hawking ansatz applied to slightly more general flat 3-manifolds. 

\subsubsection{$\ALH^*$ models}

Consider the product $$Q=\mathbb T\times [1, \infty)$$ for an oriented flat 2-torus $\mathbb T=\R^2/\Lambda$. Denote by $g_{\mathbb T}+dz^2$ the product metric  on $Q$ and fix the orthonormal frame $e_1=\p_x, e_2=\p_y, e_3=\p_z$, where $\{x, y\}$ are the standard coordinates on $\R^2$. Denote by $A$ the area of $\mathbb T$. Consider the harmonic function $$V=\frac{2\pi kz}{A}$$ for $k\in \mathbb Z_{>0}$. 
The integrality condition \eqref{eqn:integrality condition} is satisfied, so  we can apply the Gibbons-Hawking ansatz here.  Fix a choice of $\Theta$, then we obtain   an  $\ALH^*_k$ model end. As is explained in \cite[Section 2]{HSVZ22}, for a different choice of $\Theta$, the hyperk\"ahler metric only differs by a diffeomorphism. 

Since $V$ is itself invariant under the translation along $\mathbb T$, $g$ admits extra symmetries. Indeed, there is a tri-holomorphic isometric action of the 3-dimensional Heisenberg group on $(P, g)$, and the action is transitive on the level set of $z$.

An $\ALH^*$ model end exhibits interesting \emph{multi-scale} Riemannian geometry: as $z$ tends to infinity, the size of the $\mathbb S^1$ orbits shrinks, whereas the size of the base $\mathbb T^2$ expands. See Figure \ref{f:Calabi model}. It is easy to see that the volume growth rate is given by $\frac{4}{3}$, and the asymptotic cone is the ray $[0, \infty)$.

\begin{figure}[h]
\begin{center}
\begin{tikzpicture}[scale = 0.8]
\draw (2,.5) to [out = 0, in = 235] (4,2);
\draw (4, 2) to [out=55, in=255] (4.5,3);
\draw (4, -2) to [out=-55, in=-255] (4.5,-3);
\draw (2,-.5) to [out = 0, in = 125] (4,-2);
\draw[red, fill=pink](2,0) ellipse (.2 and .5);
\draw[gray](2,0) ellipse (.2 and .05);
\draw[red, fill=pink] (3,0) ellipse (.1 and .82);
\draw[gray](3,0) ellipse (.1 and .05);
\draw[red, fill=pink](4,0) ellipse (.05 and 2);
\draw[gray](4,0) ellipse (.05 and .02);
\node[align = left, below] at (7, 0.5) {\footnotesize{$\mathbb S^1$ bundle over $\mathbb T^2$}};
\draw[dashed, ->] (5.4, 0.1) -- (4.2, 0.); 

\end{tikzpicture}
\end{center}
\caption{The Calabi model space}	
\label{f:Calabi model}
\end{figure}

The \(\ALH^*\) model ends admit several interesting descriptions from the viewpoint of complex geometry. On the one hand, one may choose a compatible complex structure on \((P,g)\) such that the natural projection \(P \to \mathbb{T}\) is holomorphic. In this case, \(P\) is biholomorphic to the complement of the zero section in the disk bundle inside the total space of a holomorphic line bundle \(L\) of degree \(k\) over \(\mathbb{T}\). Thus the complete end of \(P\) can be compactified complex-analytically by adding the zero section of \(L\) (the holomorphic structure on \(L\) depends on the choice of the connection \(\Theta\)).

From this perspective, the construction fits into a special case of the \emph{Calabi ansatz}, which produces special K\"ahler metrics on a Hermitian line bundle \((L,h)\) over a compact K\"ahler manifold, of the form
\[
\sqrt{-1}\,\partial\bar{\partial} F(|\xi|_h^2).
\]
The general Calabi ansatz extends to higher dimensions; see, for example \cite{BandoKobayashi1990, TianYau1990, HSVZ2, chen2024calabiyaumetricscalabitype}.

On the other hand,  we can choose another compatible complex structure on $(P, g)$ so that the natural projection map $P\rightarrow \mathbb S^1\times (1, \infty)$ is holomorphic, for some totally geodesic circle $\mathbb S^1\subset \mathbb T$. Notice that $\mathbb S^1\times [1, \infty)$ can be holomorphically identified with a punctured disk $\Delta^*$. Then $P$ becomes an elliptic fibration over $\Delta^*$, with monodromy of type $I_k$. From standard theory the complete end of $P$ can be compactified into a smooth complex surface which is an elliptic fibration over $\Delta$, with a singular fiber at $0$, of Type $I_k$ in terms of the Kodaira classification. 

\subsubsection{$\ALG^*$ models}
Similarly we consider the flat product $$Q=\mathbb S^1\times (\mathbb R^2\setminus B_1), $$
where $B_1$ denotes the standard unit ball in $\R^2$. Denote by $\ell$ the length of $\mathbb S^1$. Take the harmonic function
$$V=\frac{k\log r}{\ell}$$
for $k\in\mathbb Z_{>0}$, where $r$ is the standard radial function on $\mathbb R^2$. Again by fixing a $U(1)$ connection $-\sqrt{-1}\Theta$ the Gibbons-Hawking construction gives rise to a $U(1)$ invariant hyperk\"ahler metric $g$ on a $U(1)$ bundle $P\rightarrow Q$ of degree $k$. This is what we call an $\ALG^*$-$I_k$ model end.  

An $\ALG^*$-$I_k^*$ model end is a free $\mathbb Z_2$ quotient of an $\ALG^*$-$I_k$ model end, where the $\mathbb Z_2$ action covers the standard involution on $\R^2$ and on $\mathbb S^1$. 

As above, an \(\ALG^*\)-\(I_k\) model end admits a tri-holomorphic isometric action of the 3-dimensional Heisenberg group. On the \(\mathbb{Z}_2\)-quotient, however, only a 2-dimensional torus action descends. The level sets of \(r\) are \emph{infranil manifolds}, namely circle bundles over a Klein bottle. The geometry also exhibits a multi-scale structure: when viewed as a torus fibration, the fibers degenerate as \(r \to \infty\).

One can check that the volume growth is quadratic. The asymptotic cone is \(\mathbb{R}^2\) in the \(\ALG^*\)-\(I_k\) case, and \(\mathbb{R}^2/\mathbb{Z}_2 = \mathbb{C}_\pi\) in the \(\ALG^*\)-\(I_k^*\) case.

From the viewpoint of complex geometry, one may choose a compatible complex structure on \(P\) such that the projection map \(P \to \mathbb{R}^2 \setminus B_1\) is holomorphic. Identifying \(\mathbb{R}^2 \setminus B_1\) holomorphically with the punctured disk \(\Delta^*\), the space \(P\) becomes an elliptic fibration over \(\Delta^*\), with monodromy of type \(I_k\) or \(I_k^*\). Moreover, it admits a compactification to an elliptic fibration over \(\Delta\).

\begin{remark}\label{rem:curvature decay rate}
  Among the asymptotic model ends described above, the \(\ALE\), \(\ALG\), and \(\ALH\) ends are flat, while the \(\ALF\) ends have cubic curvature decay. The \(\ALH^*\) ends exhibit precisely quadratic curvature decay, and the \(\ALG^*\) ends have curvature decay of order \(O\!\left(r^{-2}(\log r)^{-1}\right)\).
\end{remark}

\subsection{Constructions of examples}
There are diverse constructions of examples of hyperk\"ahler gravitational instantons in both the mathematics and physics literature. 

$\ALE$-$A_k$$(k\geq 1)$ hyperk\"ahler gravitational instantons can be constructed  using the Gibbons-Hawking ansatz, as given in Example \ref{example:3}. General $\ALE$ hyperk\"ahler gravitational instantons were constructed by Kronheimer \cite{Kronheimer89C} using the technique of hyperk\"ahler quotients (see \cite{HKLR87}).

 $\ALF$-$A_k$$(k\geq 1)$ hyperk\"ahler gravitational instantons can  be constructed using the Gibbons-Hawking ansatz, as given in Example \ref{example:3}. 
$\ALF$-$D_k$ $(k\geq 0)$ gravitational instantons can be constructed by the twistor theory and generalized legendre tranform (see for example \cite{Hit84, LR88, IR96, CK98, CK99, CH05}) and the  gluing method (see for example) \cite{BM11, Auv18, SS21, Zhu24}. $\ALF$-$D_{0}$ gravitational instanton (up to scaling) is famously known as the \emph{Atiyah-Hitchin metric}, which was originally constructed as the moduli space of centered charge two $SU(2)$ monopoles on $\R^3$ (see \cite{AH88}). It has fundamental group $\mathbb Z_2$, and its double cover is an $\ALF$-$D_1$ gravitational instanton; general $\ALF$-$D_1$ gravitational instantons were constructed by Dancer \cite{Dancer1993} using Nahm's equation and hyperk\"ahler quotients. $\ALF$-$D_2$ case was first speculated by Page \cite{Page1979} as a \emph{periodic but non-stationary  gravitational instanton}, and this was rigorously constructed by Hitchin \cite{Hit84} using the twistor theory.

Examples of $\ALG, \ALH, \ALG^*, \ALH^*$ hyperk\"ahler gravitational instantons can be constructed using solutions to the complex Monge-Amp\`ere equation  by the work of Tian-Yau-Hein \cite{TianYau1990, Hein}. These involve abstract PDE method and the metrics do not have explicit formulae in general.  In \cite{Hein} these were produced on the complement of anti-canonical fiber $D$ in a rational elliptic surface $X$. 
More specifically, in terms of the Kodaira classification of singular fibers, the metrics are 
\begin{itemize}
    \item $\ALH$ if $D$ is smooth.
    \item $\ALG$ if $D$ is singular with finite monodromy. Here the correspondence between the parameter $\beta$ and the Kodaira type is given as follows
   $$ \begin{tabular}{cccccccc}
Kodaira Type & $\I_0^*$ & $\II$ & $\II^*$ & $\III$ & $\III^*$ & $\IV$ & $\IV^*$ \\

$\beta$ & $\frac12$ & $\frac16$ & $\frac56$ & $\frac14$ & $\frac34$ & $\frac13$ & $\frac23$ \\
\end{tabular}.$$
    \item $\ALH_k^*$ if $D$ singular with infinite monodromy of Kodaira type $I_k$. Here the parameter $k\in \{1, \cdots, 9\}$.
    \item $\ALG^*$-$I_k^*$ if $D$ is singular with infinite monodromy of Kodaira type $I_k^*$. Here the parameter $k\in \{1, 2, 3, 4\}$.
\end{itemize}
The strategy of the construction goes as follows. First it is easy to write down an explicit holomorphic volume form $\Omega$ on $X\setminus D$. Then one can write down a background K\"ahler metric $\tilde\omega$ on $X\setminus D$ which agrees with an explicit \emph{semi-flat} hyperk\"ahler metric near $D$ and which has the asymptotics determined by the Kodaira type of $D$. Finally one solves the complex Monge-Amp\`ere equation \eqref{eqn:complex MA} using  $\tilde\omega$ as a background metric, i.e., by writing $\omega=\tilde\omega+\sqrt{-1}\p\bp\phi$ and solving the equation for $\phi.$

Special examples of these can be constructed using the gluing technique (see for example \cite{BM11, Auv18}). Examples of $\ALH^*$ hyperk\"ahler gravitational instantons can also be constructed on the complement of an anti-canonical divisor $D$ in a del Pezzo surface $X$ (see \cite{TianYau1990, HSVZ2}), where the background metric is a constructed using the Calabi ansatz, noticing that a neighborhood of $D$ is modeled on the total space of a positive line bundle over $D$. 

There are also many works in the literature which aim to construct hyperk\"ahler gravitational instantons (and hyperk\"ahler metrics in general dimensions too) using gauge theory. These include for example the moduli space of singular monopoles and the Hitchin moduli space of Higgs bundles, see for example \cite{Hitchin1987SelfDuality, Hitchin1987Stable, AH88,Nakajima1996, CK98, CK99, CherkisKapustin2001, Hausel2001, CherkisKapustin2002, CherkisKapustin2003, BiquardBoalch2004, Boalch2012Hyperkahler, GMN2013, Neitzke2014HyperkahlerNotes, Foscolo2016, Foscolo2017Gluing, Boalch2018Wild, MazzeoSwobodaWeissWitt2019, Fredrickson2020, ChenLi, TripathyZimet2203.13730,  Mochizuki2023HitchinAsymptotics,HHL} and the references therein.

\subsection{Classification Results A: Asymptotic Structure}
From the above discussion, we see that there is a wide variety of asymptotic models for hyperk\"ahler gravitational instantons. Nevertheless, a complete classification of their asymptotic ends is now available. Notice that a flat hyperk\"ahler gravitational instanton must be a product $\R^k\times \mathbb T^{4-k}$, so we may focus on the non-flat case. 

\begin{theorem}[Asymptotic classification \cite{SZ}] \label{thm:SZ}
Let $(M, g)$ be a hyperk\"ahler gravitational instanton which is not flat, then it must be $\ALE, \ALF, \ALG, \ALH, \ALG^*$ or  $\ALH^*$.
\end{theorem}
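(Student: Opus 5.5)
The plan is to analyze the asymptotic cones of $(M,g)$ and then upgrade cone-level information to a precise model end, separating cases by the dimension of the collapsing. If $(M,g)$ is non-collapsed, \cite{BKN} already gives $\ALE$, so assume $\nu_\infty=0$. By the quadratic curvature decay, on each annulus $A_j=\{\lambda_j^{-1}/2<r<2\lambda_j^{-1}\}$ the rescaled metrics $\lambda_j^2 g$ have bounded curvature and collapse, so Cheeger--Fukaya--Gromov theory applies: the rescaled annuli carry $\mathcal N$-structures and, after passing to local universal covers, converge smoothly to flat hyperk\"ahler limits with nilpotent (hence, using flatness and the hyperk\"ahler holonomy, abelian or Heisenberg) symmetry. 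The first step is to show that any asymptotic cone $\mathcal C$ away from $O$ is a flat Riemannian orbifold of dimension $m\in\{1,2,3\}$, whose regular part is a flat cone over a lower dimensional space, using that the limit of $\lambda_j^2 g$ with the lifted hyperk\"ahler triple $\bom$ is flat and that Ricci-flatness plus finite energy (Lemma \ref{lem:Gauss-Bonnet}) forces $\int_{A_j}|\Rm|^2\to 0$, i.e.\ curvature is $o(r^{-2})$ on average.

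Next I would prove uniqueness of the cone and identify its dimension $m$. The key tool is a monotone quantity: finite curvature energy means the scale-invariant energy on dyadic annuli tends to zero, so the collapsing fibers (tori or nilmanifolds) and the base cone cannot change their topological type between consecutive scales; combined with a connectedness argument over scales this pins down $m$ and the cone $\mathcal C=\mathbb R^3/\Gamma$, $\mathbb C_\beta$, or $[0,\infty)$. For $m=3$ the fibers are circles and one obtains, at large scale, an almost free $\mathbb S^1$ action; averaging the hyperk\"ahler structure produces an approximately Gibbons--Hawking description with $V=T+\frac{k}{2r}+O(r^{-2})$, yielding $\ALF$. For $m=1$ the fibers are 3-dimensional flat or Heisenberg nilmanifolds: the torus case gives $\ALH$ via the standard decay of harmonic functions on half-cylinders, while the Heisenberg case forces a harmonic $V$ on $\mathbb T\times[1,\infty)$ of linear growth, giving $\ALH^*$. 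For $m=2$ the fibers are $T^2$ (giving $\ALG$ with monodromy determined by the cone angle $\beta$) or Heisenberg/infranil quotients forcing $V\sim\frac{k\log r}{\ell}$, giving $\ALG^*$.

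The final step in each case is the perturbation: having an approximate model end, one writes $g$ as a model metric plus a small hyperk\"ahler triple perturbation $\bom-\bom_0$, which satisfies a linear elliptic system (closedness plus the algebraic constraint \eqref{eqn:hyperkahler}) on the model, and uses separation of variables on the (collapsed) model to show the error decays at a polynomial rate $\rho^{-\epsilon}$, with weighted Schauder estimates giving all derivatives.

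I expect the main obstacle to be the intermediate collapsing regime, namely ruling out oscillation between different cones across scales and showing the infinitesimal symmetry is global on the end (especially distinguishing $\ALG$ from $\ALG^*$ and $\ALH$ from $\ALH^*$, where the decay is only logarithmic or the fibers shrink at a multi-scale rate). I would attack this first by constructing, on each annulus, a canonical $\mathcal N$-structure compatible with the hyperk\"ahler triple and gluing consecutive ones using the smallness of scale-invariant energy, then applying an ODE-type growth/decay dichotomy for harmonic functions on the quotient base.
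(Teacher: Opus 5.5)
Your first step rests on a false claim, and the rest of the argument inherits it. Finite energy does give $\int_{A_j}|\Rm|^2\to 0$. But in the collapsed regime $\Vol(A_j)=o(r^4)$, so this says nothing about $r^2|\Rm|$. Cheeger--Tian $\epsilon$-regularity only returns $|\Rm|=O(r^{-2})$, never $o(r^{-2})$. A concrete counterexample is an $\ALH^*$ end: the annular energy is of order $r^{-8/3}\to 0$, yet $r^2|\Rm|$ stays bounded below. There the rescaled local universal covers converge to a \emph{non-flat} Gibbons--Hawking metric with affine-linear $V$, not to a flat one. Consequently your ``monotone quantity'' mechanism (small scale-invariant energy prevents fibers and base from changing between scales) has no content. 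Nothing in your proposal shows that an asymptotic cone is flat, is a metric cone, or is unique, and that is exactly the content of Theorem \ref{thm:unique tangent cone at infinity}. Connectedness of the set of cones does give uniqueness, but only once you know the possible cones form a rigid list.

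The missing idea is the analysis of the cone point through the special affine structure.
\begin{itemize}
\item In the 3-dimensional case the limit metric on $\mathcal C^\circ$ is $V g^\flat$, with $g^\flat$ an incomplete flat metric and $V>0$ harmonic.
\item A priori $V$ may carry a singular term $c/r$ at $O$, which would make the cone look like $ds^2+\frac{s^2}{4}g_{S^2}$, which is not flat. Nothing local forbids this; excluding it needs global input.
\item The paper first proves a growth estimate for $V^{-1}$ by a maximum-principle argument, showing that $g^\flat$ completes by adding one point.
\item It then uses that the affine coordinates have monodromy in $\R^3\rtimes\mathbb Z_2$, together with Eschenburg--Schroeder's classification of flat ends.
\item Finally it proves $c=0$ from Gauss--Bonnet--Chern with finite energy and the Ohta--Ono bound for symplectic fillings of spherical 3-manifolds. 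An analogous singular special K\"ahler analysis handles dimension 2.
\end{itemize}

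Your final step is also circular. Averaging over $\mathcal N$-orbits destroys the nonlinear constraint \eqref{eqn:hyperkahler}, so you cannot read off a harmonic $V$ from the averaged structure. Linearizing $\bom-\bom_0$ about a model also presupposes that $g$ is already polynomially close to that model. The paper instead proves Theorem \ref{thm:perturbation}. CFG averaging gives an $\mathcal N$-invariant metric with $O(r^{-\epsilon})$ error. A quantitative implicit function theorem, which needs the cone classification as input, then corrects it to an $\mathcal N$-invariant hyperk\"ahler metric. Only after that are the invariant ends classified; for example, the bounds $r^{-\delta}\le V^{-1/2}\le r^{\delta}$ give $V=c+\frac{k+1}{2r}+O(r^{-2})$.
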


In the non-collapsed case it follows from the classical result of Bando-Kasue-Nakajima \cite{BKN} that $(M, g)$ is $\ALE$.  The proof of Theorem \ref{thm:SZ} is mainly concerned with the collapsed case. It relies on the interaction between the general collapsing theory of Cheeger-Fukaya-Gromov \cite{CFG} in Riemannian geometry and the analysis of the hyperk\"ahler equation \eqref{eqn:hyperkahler}. Below we describe some key steps in the argument.

We always assume $(M, g)$ is collapsed. 
The first major step in the proof is to classify the \emph{asymptotic cones} of $(M, g)$. Recall that an asymptotic cone $(\mathcal C, O)$ is a pointed Gromov-Hausdorff limit of $(M_j, g_j, p_j):=(M, \lambda_j^2g, p)$ for a sequence $\lambda_j\rightarrow0$. A priori, $\mathcal C$ may depend on the choice of the sequence $\{\lambda_j\}$, and it may not even be a genuine metric cone. These were only known \cite{Kasue,MNO} to hold under the assumption that $(M,g)$ has faster-than-quadratic curvature decay, using general Riemannian geometric arguments. The following result establishes these facts for hyperk\"ahler gravitational instantons. 

\begin{theorem}[Classification of Asymptotic Cones, \cite{SZ}]\label{thm:unique tangent cone at infinity}
    There is a unique asymptotic cone of $(M, g)$, which is a flat metric cone of one of the following forms
    $$\R^3, \R^3/\mathbb Z_2, \mathbb C_\beta(\beta\in \{1,\frac12, \frac13, \frac23, \frac14, \frac34, \frac16, \frac56\}), \mathbb R, \mathbb R_{\geq 0}$$
\end{theorem}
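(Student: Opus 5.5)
We are in the collapsed case. Fix a sequence $\lambda_j\to 0$ and set $(M_j,g_j,p_j)=(M,\lambda_j^2 g,p)$. By quadratic curvature decay, on each annulus $A_j(a,b)=\{a<d_{g_j}(p,\cdot)<b\}$ the curvature is uniformly bounded. Since the volume is collapsing, Cheeger--Fukaya--Gromov \cite{CFG} applies: after passing to local universal covers the metrics converge smoothly, and $A_j$ converges in the Gromov--Hausdorff sense to a limit $X$ that is locally the quotient of a smooth Riemannian manifold by a nilpotent group action. In addition, the hyperk\"ahler triple $\bom$ is parallel and of unit norm in the rescaled metrics, so it lifts to the local covers and passes to the limit. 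This gives a limit hyperk\"ahler structure that is invariant under the nilpotent group $N$ of the collapsing $\mathcal N$-structure, and $\dim X=4-\dim N\in\{1,2,3\}$.

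\textbf{Case analysis on the collapsing dimension.} If $\dim N=1$, the limit on the cover is a hyperk\"ahler metric with a Killing field. We would show this field is tri-holomorphic, for instance because it is the limit of the collapsing circle directions and so preserves $\bom$. Then the Gibbons--Hawking description applies: the limit space is locally flat $\R^3$ with a positive harmonic function $V$ of the fiber length. The key claim is that $V$ is constant on the regular part, which makes the base flat. If $\dim N=2$, the fibers are tori and the limit is a $T^2$-invariant hyperk\"ahler metric. This is the semi-flat situation: the base carries a special K\"ahler metric, and requiring it to arise as a scale-invariant limit forces it to be a flat cone $\mathbb C_\beta$. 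If $\dim N=3$, the limit is one-dimensional, hence $\R$ or $\R_{\ge0}$; the case $\R$ is excluded because $M$ has one end, except in the way that case is actually realized, which must be checked carefully. Flatness away from $O$ should follow from curvature control: $|\Rm_{g_j}|$ is bounded on annuli, and we need it to tend to zero. We would get this from finite energy (Lemma \ref{lem:Gauss-Bonnet}) combined with $\epsilon$-regularity on the collapsed covers, since energy on dyadic annuli tends to zero.

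\textbf{Cone structure and admissible angles.} The limit $\mathcal C$ is a flat length space of dimension $\le 3$ with a singularity only at $O$. The volume monotonicity used earlier is not available in the collapsed case, so we would instead use that $\mathcal C$ is again scale-invariant: an asymptotic cone of an asymptotic cone is an asymptotic cone, by a diagonal argument. We would then argue with the link. A flat surface that is smooth away from one point is a cone $\mathbb C_\beta$. A flat 3-dimensional limit, with the involution coming from non-orientability of the circle fibration, is $\R^3$ or $\R^3/\dZ_2$. For $\mathbb C_\beta$, the monodromy of the limiting $T^2$-lattice bundle around $O$ must preserve the triple $\bom$. This forces an element of finite order in $SL(2,\dZ)$ compatible with rotation by $2\pi\beta$, which gives the listed values of $\beta$.

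\textbf{Uniqueness.} This is the main obstacle. The set of asymptotic cones is connected in the pointed Gromov--Hausdorff topology, since the family $\lambda\mapsto(M,\lambda^2g,p)$ is continuous. The list above is discrete except for the parameter $\beta$, and $\beta$ takes finitely many values. So the dimension is constant, and within dimension 2 the angle is constant. The hard part is justifying the flatness step: in particular showing that the Gibbons--Hawking $V$, or the special K\"ahler base, becomes flat in the limit. We would attempt this through an integral estimate for $V$ on annuli, a three-annulus type argument for the harmonic function, which rules out the log-type growth seen in the $\ALG^*$ and $\ALH^*$ models from producing non-flat cones.
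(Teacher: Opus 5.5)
Your overall architecture is sound: Cheeger--Fukaya--Gromov collapsing, lifting $\bom$ to local covers, a Gibbons--Hawking or semi-flat description of the limit, and connectedness of the set of asymptotic cones for uniqueness. That last argument is fine once every asymptotic cone is known to be one of the listed spaces. The steps that carry the content of the theorem, however, are either wrong or assumed. First, the flatness step fails. Finite energy only gives $\int_{A_j(1,2)}|\Rm_{g_j}|^2\dvol_{g_j}\to 0$ on the collapsed annuli downstairs. Lifting to the local universal cover $\widetilde B_j$ of a unit ball $B_j\subset A_j(1,2)$ multiplies the energy by roughly $\Vol(\widetilde B_j)/\Vol(B_j)\to\infty$, so $\epsilon$-regularity on the covers yields nothing. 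The conclusion $|\Rm_{g_j}|\to 0$ is also false in general. $\ALH^*$ ends have finite energy but $|\Rm|\sim r^{-2}$ exactly, so $|\Rm_{g_j}|\not\to 0$. Their local cover limits are non-flat Heisenberg-invariant Gibbons--Hawking metrics with $V$ linear in $z$. What has to be flat is the quotient metric on $\mathcal C^\circ$, and that must come from the structure on $\mathcal C^\circ$, not from curvature decay.

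Second, and this is the real gap, you never analyse the singularity at $O$. A priori nothing is known about the topology of $\mathcal C$ near $O$, so ``a flat surface smooth away from one point is a cone $\mathbb C_\beta$'' presupposes the hard step. Your claim that ``$\mathcal C$ is scale-invariant'' is also unjustified: only the \emph{set} of asymptotic cones is invariant under rescaling. Scale invariance would not suffice anyway. $V=c/r$ is positive harmonic on $\R^3\setminus\{0\}$, and $V(dr^2+r^2g_{S^2})=ds^2+\frac{s^2}{4}g_{S^2}$ with $s=2\sqrt{cr}$ is a genuine non-flat metric cone (the Gromov--Hausdorff limit of $\C^2/\dZ_k$ as $k\to\infty$). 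So no three-annulus or homogeneity argument can force $V$ to be constant.

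The paper proceeds in two steps that you are missing. (i) It uses the special affine coordinates $x,y,z$ and a maximum-principle growth bound on $V^{-1}$ to show that $g^\flat=V^{-1}g_\infty$ is completed near $O$ by a single point. In place of an unavailable removable-singularity theorem, it uses the monodromy in $\R^3\rtimes\dZ_2$ and the Eschenburg--Schroeder classification of flat ends, so that B\^ocher's theorem gives $V=c/r+v$. (ii) It proves $c=0$ by a global topological argument combining Gauss--Bonnet--Chern, finite energy, and the Ohta--Ono bounds for symplectic fillings of spherical 3-manifolds.

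The two-dimensional case likewise requires analysing a singular special K\"ahler structure near $O$. Your finite-order monodromy argument presumes a non-degenerate limiting lattice, which need not exist. $\ALG^*$-$I_k^*$ ends have infinite-order monodromy on $M$ yet asymptotic cone $\mathbb C_{1/2}$, because the fibre lattice degenerates under rescaling.
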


We give a sketch of the proof of Theorem \ref{thm:unique tangent cone at infinity} here. Let $(\mathcal C, O)$ be an asymptotic cone. By the quadratic curvature decay assumption,  we know  that locally away from $O$, the convergence to $\mathcal C$ is collapsing with uniformly bounded curvature. General Cheeger-Fukaya-Gromov theory implies that $\mathcal C^\circ:=\mathcal C\setminus \{0\}$ is locally given by the quotient of a smooth Riemannian manifold by a compact Lie group and the collapsing is locally along an infranil fibration, possibly with singular fibers. In our special setting of 4-dimensional hyperk\"ahler metrics one can show that $\mathcal C^\circ$ is indeed a smooth Riemannian manifold, and the collapsing is locally along a nilpotent fibration. In particular, the fibers are either tori or three dimensional Heisenberg manifolds.

The hyperk\"ahler structure on $M$ gives a special \emph{affine structure} on $\mathcal C^\circ$ (the precise meaning depends on the dimension of $\mathcal C^\circ$). Take any $q\in \mathcal C^\circ$ and a sequence $q_j\in (M_j, g_j)$ converging to $q$. One can find $r>0$ such that the universal cover $\widetilde B_j$ of $B(q_j, r)$ is locally volume non-collapsing, and after passing to subsequences it converges to a  4-dimensional smooth limit manifold $\widetilde B_\infty$ equipped with a hyperk\"ahler metric. Moreover, the action of the deck transformation group $G_j$ on $\widetilde B_j$ converges to the tri-holomorphic isometric action of a Lie group $G_\infty$ of positive rank, and $B(q, r)$ is  given by the quotient $\widetilde B_\infty/G_\infty$.
To understand the global geometry of $\mathcal C$ one needs to  analyze the singularity behavior at $O$. For general Einstein equation, this is a challenging question in the volume collapsing situation. Notice that there is no a priori knowledge about the topology of $\mathcal C$ around $O$. The main observation in \cite{SZ} is that, in our hyperk\"ahler setting, the singularity at $O$ can be controlled by the special affine structure in a punctured neighborhood of $O$. 
 
Let us focus on the case when the dimension of $\mathcal C^\circ$ is  3. In this case $G_\infty$ is generated by a single tri-holomorphic Killing field. As a converse to the Gibbons-Hawking ansatz, the limit Riemannian metric $g_\infty$ on $\mathcal C^\circ$ is locally of the form 
$$g_\infty=V(dx^2+dy^2+dz^2),$$
where the coordinates $x, y, z$ can be chosen to be globally well-defined up to a translation and a simultaneous rescaling by a constant $C\neq 0$,
and $V>0$ is a harmonic function with respect to the flat metric $$g^\flat:=V^{-1}g_\infty.$$ 

The analysis essentially reduces to the study of a positive harmonic function on an (incomplete) flat 3-manifold. A crucial step is to show that, near \(O\), the metric \(g^\flat\) can be completed by adding exactly one point. This requires a growth estimate for \(V^{-1}\) near the singularity.

To obtain such an estimate, observe that if \(V^{-1}\) were to grow too quickly, then \(V\) would decay correspondingly fast and hence extend across the singularity as a harmonic function in the sense of metric measure spaces. The maximum principle would then imply that \(V\) is \emph{bounded} near the singularity, leading to a contradiction.

If one knew that the metric completion of \(g^\flat\) were a flat 3-dimensional cone (i.e., \(\mathbb{R}^3\) or \(\mathbb{R}^3/\mathbb{Z}_2\)), then B\^{o}cher's theorem would imply that \(V\) has the form
\begin{equation}\label{eqn:harmonic function local form}
V = \frac{c}{r} + v,
\end{equation}
for some \(c > 0\) and a smooth harmonic function \(v\) extending across the singularity. In practice, however, such a removable singularity theorem is not available (see Question~4.13 in \cite{SZ}).

Instead, \cite{SZ} exploits the additional structure coming from the special affine coordinates \(\{x,y,z\}\), whose monodromy lies in \(\mathbb{R}^3 \rtimes \mathbb{Z}_2\), together with the classification of ends of flat manifolds by Eschenburg-Schroeder \cite{ES}.

In our setting, one can further show that the constant \(c\) must vanish. This uses the Gauss-Bonnet-Chern theorem, the finite curvature energy of \((M,g)\), and a result on topological lower bounds for symplectic fillings of spherical 3-manifolds \cite{OhtaOno2005}. As a consequence, the singularity is a 3-dimensional smooth orbifold with a unique tangent cone given by flat \(\mathbb{R}^3\) or \(\mathbb{R}^3/\mathbb{Z}_2\).

A similar analysis applies when \(\mathcal{C}^\circ\) has dimension \(2\), where the singularity of \(\mathcal{C}\) is described by a singular \emph{special K\"ahler structure}, and the possible monodromies can be classified.

Using the local analysis together with the completeness of \(\mathcal{C}\), one can further show that, in general, \(\mathcal{C}\) must be isometric to \(\mathbb{R}^k \times \mathbb{T}^{3-k}\) for \(k = 1,2,3\), or to a \(\mathbb{Z}_2\)-quotient thereof. By the rigidity of these spaces, it follows that there is a unique asymptotic cone, which is a metric cone belonging to the list in Theorem~\ref{thm:unique tangent cone at infinity}.

\

The second major step in proving Theorem~\ref{thm:SZ} is to \emph{create} local symmetries on the end of \(M\). The Cheeger-Fukaya-Gromov theory yields an \(\mathcal{N}\)-structure on the end. In our setting, this means, roughly speaking, that there exists a local free action of a nilpotent group \(\mathcal{N}\) on \(M \setminus K\), for some compact set \(K \subset M\), such that under rescaling to the asymptotic cone \(\mathcal{C}\), the \(\mathcal{N}\)-orbits correspond precisely to the collapsing fibers.

The following result reduces the analysis of the asymptotic behavior of \(g\) to the \(\mathcal{N}\)-invariant case.

\begin{theorem}[Perturbation Theorem \cite{SZ}]\label{thm:perturbation}
    By enlarging $K$ if necessary, there is an $\mathcal N$-invariant hyperk\"ahler metric $\widetilde g$ on $M\setminus K$ and $\epsilon>0$ such that for all $k\geq0$,
    \begin{equation}\label{eqn:perturbation}
        \|\nabla_g^k(\widetilde g-g)\|_{g}\leq C_k r^{-\epsilon-k}.
    \end{equation}
\end{theorem}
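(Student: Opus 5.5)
The plan is to first build an approximately invariant metric by averaging, and then correct it to an exactly hyperk\"ahler, exactly $\mathcal N$-invariant one by solving an invariant linearized problem.

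\textbf{Averaging.} Work on the end $M\setminus K$, where the Cheeger--Fukaya--Gromov theory gives an $\mathcal N$-structure whose orbits are the collapsing fibers. By the quadratic curvature decay, the diameter of these orbits relative to $r$ tends to zero. Pass to the local universal covers $\widetilde B_j$ of balls $B(q,r(q)/2)$ with $r(q)=d(p,q)$. These are non-collapsed with bounded geometry after rescaling by $r(q)^{-2}$, and on them the deck group is approximated by a nilpotent Lie group $\mathcal N$ acting almost isometrically. Using the Cheeger--Fukaya--Gromov averaging construction (the center of mass over the orbit with respect to a Haar measure), produce a metric $g_1$ and, separately, a triple $\bom_1=(\omega_{1,1},\omega_{1,2},\omega_{1,3})$, obtained by averaging the hyperk\"ahler triple $\bom$ of $g$ over $\mathcal N$-orbits. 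Because $\mathcal N$ acts by tri-holomorphic isometries in the limit, averaging commutes with $d$ up to controlled errors. The averaged forms $\bom_1$ are then exactly $\mathcal N$-invariant and closed, and they satisfy \eqref{eqn:hyperkahler} up to an error $Q(\bom_1)$ bounded by $C_k r^{-\epsilon-k}$ in $C^k$.

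The polynomial rate $r^{-\epsilon}$ has to be extracted, and this uses the uniqueness and flatness of the asymptotic cone from Theorem~\ref{thm:unique tangent cone at infinity}. The key step is to compare the geometry at scale $r$ with the cone geometry at a rate, and this comparison is where I expect the main difficulty. I would first try a three-annulus or monotonicity argument for the deviation from invariance: measure it by $\|\mathcal L_X\bom\|$ over the averaged vector fields $X$, and show that it decays by a fixed factor on dyadic annuli. The mechanism is that on the invariant flat limit the only non-decaying invariant deformations are finite dimensional and are excluded by the cone structure.

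\textbf{Correction.} Next, perturb $\bom_1$ to an exact hyperk\"ahler triple within the $\mathcal N$-invariant class. Write $\widetilde\bom=\bom_1+d\bm a$ with $\bm a$ an $\mathcal N$-invariant triple of 1-forms, normalized so that the linearization of the hyperk\"ahler equation becomes an elliptic Dirac/Laplace-type operator $d^+\oplus d^*$. Restricted to invariant forms, this operator descends to an operator on the base $\mathcal C^\circ$ near infinity: the Gibbons--Hawking harmonic function equations in the $\ALF$/$\ALH^*$ cases, and torus-invariant or special K\"ahler equations in the $\ALG$/$\ALG^*$/$\ALH$ cases. On the base, set up weighted H\"older spaces $C^{k,\alpha}_{\delta}$ with weight $r^{\delta}$. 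Choose $\delta$ avoiding the indicial roots of the model operator on the flat cone, so that there is a bounded right inverse on $\{r>R\}$ with suitable boundary conditions. Then run a contraction mapping for $R\gg1$, using that the nonlinearity is quadratic and the error is $O(r^{-\epsilon})$.

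\textbf{Conclusion.} The resulting invariant triple $\widetilde\bom$ defines an $\mathcal N$-invariant hyperk\"ahler metric $\widetilde g$. The bound \eqref{eqn:perturbation} follows by combining the decay of $\bom_1-\bom$ with the weighted estimate for $d\bm a$. The higher derivative bounds come from scaled Schauder estimates on the local covers, which give the factor $r^{-k}$. The main obstacle remains obtaining the polynomial rate $\epsilon>0$ in the averaging step, rather than merely $o(1)$; the inversion step is comparatively standard once the indicial roots are computed from the explicit models.
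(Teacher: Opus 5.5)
Your architecture is the one the paper sketches: average over the $\mathcal N$-orbits, then correct $\bom_1+d\bm a$ within $\mathcal N$-invariant triples by an implicit function argument for $d^+\oplus d^*$. As written, though, the proposal does not prove the statement. The rate $\epsilon>0$, which is exactly what separates \eqref{eqn:perturbation} from a soft $o(1)$ closeness result, is left open, and the mechanism you suggest for it targets the wrong quantity.

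Averaging returns an invariant triple unchanged, and since it is built from pullbacks it commutes with $d$ exactly. So $\bom_1-\bom$ is just the non-invariant part of $\bom$. Its size is bounded by $\sup|\phi^*\bom-\bom|$ over the group elements that move points within an orbit. In the picture rescaled to unit size at distance $r$, where curvature is bounded and the local universal covers are non-collapsed, this is controlled by the relative orbit size $\mathrm{diam}(\mathcal N\text{-orbit})/r$. That is why, in the paper's sketch, the estimate for the averaged metric is credited to the Cheeger--Fukaya--Gromov construction itself.

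A three-annulus argument excluding non-decaying invariant deformations on the flat limit is therefore beside the point. It also has no clean setting, because $\mathcal L_X\bom$ does not satisfy a fixed linear equation: the $X$ are manufactured from $g$. What you need is a polynomial bound on the relative orbit size, and this is where Theorem~\ref{thm:unique tangent cone at infinity} enters. For example, if $\mathcal C=\mathbb R^3$, the local limits on universal covers are Gibbons--Hawking metrics whose quotient $V g^\flat$ is flat with $V$ harmonic for $g^\flat$. This forces $V$ to be constant, so the Killing field has constant length on every limit. Hence the orbit length changes by a factor $1+o(1)$ across each dyadic annulus, it is $O(r^{\delta})$ for every $\delta>0$, and the relative orbit size is $O(r^{-1+\delta})$.

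You have also misplaced the hard step. The paper singles out the quantitative implicit function theorem as the part that depends crucially on the classification of the asymptotic cone. Your claim that the inversion is standard ``once the indicial roots are computed from the explicit models'' is circular. Identifying the end as $\ALF$, $\ALG$, $\ALH$, $\ALG^*$ or $\ALH^*$ is the conclusion of Theorem~\ref{thm:SZ}, which is derived \emph{from} the perturbation theorem.

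At this stage you know only the asymptotic cone and the $\mathcal N$-structure. The invariant linearized operator depends on the unknown orbit geometry along the end: orbit lengths, twisting of the fibration, and in the nilpotent case the relative scaling of the fiber directions. What must be proved is a right inverse for the invariant linearization with bounds uniform over all scales. It has to use only the scale-by-scale closeness to the flat cone with collapsed fibers that the cone classification provides.
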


The proof of Theorem~\ref{thm:perturbation} involves two main ingredients. First, it was shown in \cite{CFG} that by averaging over the \(\mathcal{N}\)-orbits, one can construct an \(\mathcal{N}\)-invariant Riemannian metric \(\widetilde{g}\) satisfying \eqref{eqn:perturbation}. However, in general, this averaging procedure destroys the special structures associated with \(g\).

Secondly, a quantitative implicit function theorem is established in \cite{SZ} to further perturb \(\widetilde{g}\) to a new metric that is hyperk\"ahler. This step relies crucially on Theorem~\ref{thm:unique tangent cone at infinity} and is inspired by related gluing constructions for hyperk\"ahler metrics on K3 surfaces (see \cite{Foscolo2016, HSVZ22}).

\

To complete the proof of Theorem~\ref{thm:SZ}, it remains to classify \(\mathcal{N}\)-invariant hyperk\"ahler ends. We illustrate this in the case where the asymptotic cone \(\mathcal{C}\) is \(\mathbb{R}^3\). In this situation, the \(\mathcal{N}\)-structure is given by an \(\mathbb{S}^1\)-fibration, and the metric \(\widetilde{g}\) is locally described by the Gibbons-Hawking ansatz.

As explained above, the orbit space \(\mathcal{Q}\) of the local \(\mathbb{S}^1\)-action inherits a special affine structure. In this setting, one can show that \(\mathcal{Q}\) may be identified with the complement of a compact set in \(\mathbb{R}^3\). One consequence of Theorem~\ref{thm:perturbation} is that the length of the \(\mathbb{S}^1\)-orbits, given by \(V^{-1/2}\), satisfies
\begin{equation}
r^{-\delta} \leq V^{-1/2} \leq r^{\delta}
\end{equation}
for all \(\delta > 0\). It follows that \(V\) must have the form
\[
V = c + \frac{k+1}{2r} + O(r^{-2}),
\]
for some constants \(c \in \mathbb{R}\) and \(k \in \mathbb{Z}\). Consequently, \(\widetilde{g}\), and hence \(g\), is asymptotic to the \(\ALF\)-\(A_k\) model end.

\

We remark that a partial classification of gravitational instantons was obtained by Chen-Chen \cite{Chen-Chen1} under the additional assumption that the curvature decays at the rate \(O(r^{-2-\epsilon})\) for some \(\epsilon > 0\). In this case, the metric is locally asymptotic to a flat torus bundle, and only the four families \(\ALE\), \(\ALF\), \(\ALG\), and \(\ALH\) can occur.

The arguments in \cite{Chen-Chen1} are based on the approach of Minerbe \cite{Minerbe}, which studies the holonomy of short geodesic loops at infinity via \emph{ODE} comparison. By contrast, the approach developed in \cite{SZ} is quite different: it makes use of the full strength of Cheeger-Fukaya-Gromov theory. This has the advantage of, on the one hand, accommodating possible \emph{multi-scale} collapsing phenomena at infinity, and on the other hand, highlighting the role of the hyperk\"ahler equation as an elliptic \emph{PDE}.

\

\subsection{Classification Results B: Global Structure}
To classify hyperk\"ahler gravitational instantons, Theorem~\ref{thm:SZ} reduces the problem to understanding all hyperk\"ahler gravitational instantons \((M,g)\) with a prescribed asymptotic model. There are several aspects to consider:
\begin{itemize}
\item (Complex geometry) For a choice of compatible complex structure \(J\), describe the metric \(g\) in terms of the complex-geometric data of \((M,J)\).
\item (Torelli problem) Characterize the metric \(g\) in terms of cohomological data (i.e., periods).
\end{itemize}


\subsubsection{Complex geometry}
To understand the complex geometry of gravitational instantons, we recall the longstanding compactification conjecture of Yau \cite{Yau1982problem}.

\begin{conjecture}\label{conj:Yau}
A complete Ricci-flat K\"ahler manifold can be compactified into a compact complex manifold by adding a divisor at infinity.
\end{conjecture}

This conjecture is known to fail in general; a counterexample is given by the 4-dimensional complete hyperk\"ahler metrics constructed in \cite{AKL}. The following result confirms a version of Conjecture~\ref{conj:Yau} in complex dimension \(2\).

\begin{theorem}[Compactification Theorem] \label{thm:compactification}
    Given a hyperk\"ahler gravitational instanton $(M, g)$, there is a compatible complex structure $J$ such that $(M, J)$ is bi-holomorphic to the complement of a divisor $D$ in an algebraic surface $X$. 
\end{theorem}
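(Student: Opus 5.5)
We combine the asymptotic classification with the perturbation theorem and then compactify the model end complex-analytically. By Theorem \ref{thm:SZ}, $(M,g)$ is $\ALE$, $\ALF$, $\ALG$, $\ALH$, $\ALG^*$ or $\ALH^*$. In each case we pick a compatible complex structure $J=J_{\bm u}$ adapted to the model end, namely one for which the model end is holomorphically a punctured neighborhood of a divisor in a compact surface:
\begin{itemize}
\item For $\ALE$, the end $\C^2/\Gamma$ compactifies by adding a quotient of $\CP^1$ at infinity. With a generic $J$ this is a weighted projective compactification.
\item For $\ALF$-$A_k$, we use the Gibbons--Hawking description. Following LeBrun's observation for Taub-NUT, the end is biholomorphic to the end of an affine surface of the form $xy=p(z)$, which compactifies by adding a chain of rational curves. $\ALF$-$D_k$ is handled by taking the $\mathbb Z_2$ quotient.
\item For $\ALG$, $\ALH$, $\ALG^*$ and $\ALH^*$, we choose $J$ so that the torus fibers are holomorphic. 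The end is then an elliptic fibration over $\Delta^*$, or for $\ALH^*$ the punctured disk bundle of a positive line bundle over an elliptic curve. These compactify by adding a Kodaira fiber, or the elliptic curve respectively, as described in Section \ref{ss:asymptotic models hyperkahler instantons}.
\end{itemize}

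The main obstacle is transferring this from the model end to $(M,J)$. The metric $g$ is only asymptotic to the model with decay $O(r^{-\epsilon})$, so the complex structure at infinity is only approximately the model one. The plan is first to apply Theorem \ref{thm:perturbation} to replace the model by an $\mathcal N$-invariant hyperk\"ahler end $\widetilde g$ whose complex structure is exactly a compactifiable one. Then we solve $\bp$-equations on the end with weighted $L^2$ (H\"ormander-type) estimates, using the Ricci-flatness and the decay of $J-\widetilde J$. This produces holomorphic functions or sections on $(M,J)$ with prescribed polynomial growth that approximate the model coordinates at infinity.

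Concretely, in the $\ALE$ and $\ALF$ cases we would produce enough holomorphic functions of polynomial growth to embed $M$ into affine space as an affine surface. We would then show the ring of polynomial-growth holomorphic functions is finitely generated with the expected Hilbert series, by comparison with the model via a three-circle or frequency argument. The projective closure then gives $X$. In the elliptic cases we would construct a holomorphic map to $\Delta^*$ perturbing the model fibration. Its fibers are then $J$-holomorphic elliptic curves with the model monodromy, and Kodaira's theory lets us glue in the singular fiber to get a compact surface. For $\ALH^*$ we would instead find a holomorphic coordinate on the punctured disk bundle and glue in the zero section.

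Finally, algebraicity follows because $X$ carries an effective divisor $D$ with $D^2\ge 0$ supporting a big line bundle, for example in the $\ALH^*$ case $D^2=k>0$. Hence $X$ is Moishezon, and a smooth Moishezon surface is projective. In the elliptic cases we may instead use that $X$ is a compact elliptic surface with a section-type fiber class, so its algebraic dimension is at least one, and we check projectivity via Kodaira's classification.
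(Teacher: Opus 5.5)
\textbf{Gap: the transfer mechanism in the $\ALF$ case.} Your reduction to Theorem \ref{thm:SZ} and your treatment of the conical ($\ALE$) case fit what the paper cites. So does the idea, in the elliptic cases, of perturbing the single function defining the fibration and then filling in the Kodaira fibre. The problem is the mechanism you commit to for $\ALF$ (and implicitly for $\ALH$, $\ALH^*$): holomorphic functions of \emph{polynomial} growth approximating the model coordinates, an affine embedding, and finite generation via a three-circle/frequency argument.

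On an $\ALF$-$A_k$ model end with $V=T+\frac{k+1}{2r}$, rotate coordinates so that $w=x_2+\sqrt{-1}x_3$ is $J$-holomorphic. The holomorphic functions are then $w$, of linear growth, and the fibre coordinates $u,v$ (your $x,y$). These satisfy $uv=w^{k+1}$ up to a constant, with $\log|u|=cTx_1+O(\log r)$ and $\log|v|=-cTx_1+O(\log r)$ for some $c>0$, so they grow \emph{exponentially} in the distance. Already on Taub-NUT $\cong\C^2$, the holomorphic functions of polynomial growth are exactly the polynomials in $w$: restrict to the $\C^*$-fibres $\{w=\mathrm{const}\}$ and estimate Laurent coefficients. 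So polynomial-growth functions cannot embed $M$, there is no Hilbert series to compare, and the three-circle argument has nothing to act on.

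The same problem arises for $\ALH$, where the base coordinate is of the form $e^{a(s+\sqrt{-1}\theta)}$. It also arises for the line-bundle complex structure you choose on $\ALH^*$, where the natural holomorphic functions grow super-exponentially; this is exactly why \cite{HSVZ2} need H\"ormander estimates with weights matched to that growth. Polynomial weights only fit $\ALE$, $\ALG$ and $\ALG^*$.

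For $\ALF$, a workable route is to use only the linear-growth holomorphic function asymptotic to $w$. Its fibres near infinity are $\C^*$'s, and one compactifies fibrewise to a ruled surface, consistent with the rational ruled surface compactification of Chen--Chen \cite{Chen-Chen2}. For $A_k$, Minerbe \cite{Minerbe2009Rigidity} instead shows directly that the metric is Gibbons--Hawking. Note also that Theorem \ref{thm:SZ} alone does not exclude $\ALF$-$A_k$ ends with $k+1\le 0$, nor $D_k$ ends that are quotients of them (Atiyah--Hitchin is one). For these, the description ``$xy=p(z)$'' and the $\mathbb Z_2$-quotient shortcut are not available as stated.

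\textbf{Gap: algebraicity in the elliptic cases.} There $D$ is a fibre with $D^2=0$, so $\mathcal O(D)$ is not big. A compact elliptic surface has algebraic dimension at least one but need not be projective, and ``check via Kodaira's classification'' does not supply the missing input. You need something like: show that $\Omega$ extends meromorphically with a simple pole along $D$, so that $D$ is anticanonical, and deduce rationality from this. That is how one arrives at the rational elliptic surfaces described in the paper.

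Finally, the detour through Theorem \ref{thm:perturbation} buys nothing. The $O(r^{-\epsilon})$ approach to the model is already part of Theorem \ref{thm:SZ}. Moreover, $\widetilde g$ is only $\mathcal N$-invariant, not the model itself, so compactifiability of its complex structure would itself need proof.
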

 Theorem~\ref{thm:compactification} follows from Theorem~\ref{thm:SZ}, together with several works that treat each asymptotics separately, which we will describe below. Given Theorem~\ref{thm:compactification}, a natural question is to further classify the hyperk\"ahler metric in terms of more refined complex-geometric data; this will also be discussed below.

In the \(\ALE\) case, by the classical work of Kronheimer \cite{Kronheimer89T}, we know that for \emph{any} choice of compatible complex structure \(J\), \((M,J)\) is given by the minimal resolution of a partial smoothing of a complex singularity \(\mathbb{C}^2/\Gamma \subset \mathbb{C}^3\) for some \(\Gamma \subset SU(2)\). These spaces can be compactified into a projective orbifold by adding a natural orbifold divisor at infinity.
The metric \(g\) is, up to scaling, characterized by the choice of a K\"ahler class on \((M,J)\). More generally, it belongs to the class of asymptotically conical Ricci-flat K\"ahler metrics in arbitrary dimension; see \cite{BandoKobayashi1990, TianYau1991, ConlonHein2015ACCY}.

In the \(\ALF\)-\(A_k\) case, it was proved by Minerbe \cite{Minerbe2009Rigidity} that all such metrics arise from the Gibbons-Hawking ansatz, and are precisely the multi-Taub-NUT metrics. For any choice of compatible complex structure \(J\), \((M,J)\) is given by the minimal resolution of a partial smoothing of an \(A_k\) singularity. In particular, one must have \(k \geq 1\).

In the general \(\ALF\) case, it was proved by Chen-Chen \cite{Chen-Chen2} that for any choice of \(J\), \((M,J)\) admits a compactification to a rational ruled surface. In the \(\ALF\)-\(D_k\) case, the hyperk\"ahler metrics can be classified in terms of their twistor spaces, and are precisely those constructed by Cherkis-Hitchin-Ivanov-Kapustin-Lindstr\"om-Ro\v{c}ek \cite{Hit84, LR88, IR96, CK98, CK99, CH05}. In particular, one must have \(k \geq 0\).

For the remaining four families, it is known that for \emph{some} choice of compatible complex structure \(J\), \((M,J)\) can be compactified into a rational elliptic surface by adding an anti-canonical fiber. This is proved by Chen-Chen \cite{Chen-Chen1, Chen-Chen3} in the \(\ALG\) and \(\ALH\) cases, by \cite{ChenViaclovsky2022QuadraticVolume} in the \(\ALG^*\) case, and by \cite{CollinsJacobLin2021SLagTori, CJL1, HSVZ2} in the \(\ALH^*\) case. In particular, in the \(\ALG\) case one must have \(\beta \in \{\tfrac12, \tfrac13, \tfrac23, \tfrac14, \tfrac34, \tfrac16, \tfrac56\}\); in the \(\ALG^*\) case one must have \(\ALG^*\text{-}I_k^*\) with \(k \in \{1,2,3,4\}\); and in the \(\ALH^*\) case one must have \(k \in \{1,\dots,9\}\).

Furthermore, the hyperk\"ahler metrics can be obtained via a generalized Hein construction by incorporating additional parameters into \cite{Hein}, except possibly in the $\ALH$ case (see the discussion in \cite{Chen-Chen3}. A common strategy in these compactification results is to construct a holomorphic function realizing \((M,J)\) as an elliptic fibration over \(\mathbb{C}\). Such a function \(f\) can be readily constructed on the asymptotic model for a suitable choice of \(J\). One then grafts \(f\) to \(M\) and uses weighted Fredholm theory to first produce a harmonic function \(\widetilde{f}\) asymptotic to \(f\), and subsequently prove that \(\widetilde{f}\) is in fact holomorphic.

In the \(\ALH^*\) case, it is proved in \cite{HSVZ2} (see also \cite{LinCollins2024SYZSurvey}) that, for a different choice of complex structure \(I\), the manifold \((M,I)\) admits a compactification to a weak del Pezzo surface, and the hyperk\"ahler metric arises from the construction in \cite{TianYau1990}. 

This compactification is more delicate, since the natural \(I\)-holomorphic functions on the asymptotic model exhibit super-exponential growth at infinity. As a result, the strategy based on weighted Fredholm theory encounters difficulties, due to the fact that in the construction of \cite{TianYau1990}, the decay rate of the metric is much slower than that of the complex structure. 
In \cite{HSVZ2}, a new approach is introduced, based on the more robust \(L^2\)-estimates in several complex variables pioneered by H\"ormander. This method also admits higher-dimensional generalizations, relating asymptotically Calabi metrics to weak Fano manifolds; see \cite{HSVZ2, chen2024calabiyaumetricscalabitype}.

\subsection{Torelli type theorems}
Torelli-type theorems for hyperk\"ahler gravitational instantons are motivated by the corresponding results for hyperk\"ahler metrics on the \emph{K3 manifold}. Recall that the K3 manifold \(\mathcal K\) is, by definition, the underlying oriented smooth 4-manifold of a complex K3 surface. It is known that \(\mathcal K\) is simply connected, and the cup product on \(H^2(\mathcal K; \mathbb{Z})\) has signature \((3,19)\).
Denote by
\[
\mathcal D = \OO(3,19)/(\OO(3)\times \OO(19))
\]
the Grassmannian of \emph{positive} 3-dimensional subspaces of \(H^2(\mathcal K; \mathbb{R})\). There is a natural action of the automorphism group \(\Aut(H^2(\mathcal K; \mathbb{Z}))\) on \(\mathcal D\).

 Denote by $\widehat{\mathcal M}$ the set of hyperk\"ahler metrics on $\mathcal K$ with unit diameter. Then we have a \emph{period map}
  \begin{equation}
 \mathcal P: \widehat{\mathcal M}\rightarrow \mathcal D; g\mapsto  \mathbb H^+_g, 
 \end{equation}
 where $\mathbb H^+_g\subset H^2(\mathcal K, \mathbb R)$ denotes the three dimensional subspace of $H^2(\mathcal K; \R)$ represented by the span of the hyperk\"ahler triple $\{\omega_1, \omega_2, \omega_3\}$. The Torelli theorem in this case states that 
 \begin{itemize}
 	\item(Injectivity) $g_1$ and $g_2$ are isometric, i.e., $g_2=\varphi^*g_1$ for  $\varphi\in \text{Diff}(\mathcal K)$,
 if and only if $\mathcal P(g_1)=\gamma.\mathcal P(g_2)$ for some $\gamma\in \Aut(H^2(\mathcal K; \mathbb Z))$.
  	\item(Surjectivity) $\mathcal P$ is surjective onto the open subset $\mathcal D^\circ\subset\mathcal D$ of \emph{non-degenerate} periods.  Here a 3-dimensional positive subspace $V$ of $H^2(\mathcal K; \R)$ is said to be non-degenerate if for any  $\delta\in H_2(\mathcal K; \mathbb Z)$ with $\delta.\delta=-2$, there is an element $v\in V$ such that $\int_{\delta}v\neq0$.
 \end{itemize}

Torelli-type results for hyperk\"ahler gravitational instantons have been established by Kronheimer \cite{Kronheimer89T} in the \(\ALE\) case, by Chen-Chen \cite{Chen-Chen2, Chen-Chen3} in the \(\ALF\) case, by Chen-Viaclovsky-Zhang \cite{CVZ} and Lee-Lin \cite{LL} in the \(\ALG\) and \(\ALG^*\) cases, and by Collins-Jacob-Lin \cite{CJL1} and Lee-Lin \cite{LL} in the \(\ALH^*\) case.

In the \(\ALE\) case, the underlying smooth manifold \(\mathcal E\) is the minimal resolution of \(\mathbb{C}^2/\Gamma\) for some \(\Gamma\). The period domain \(\mathcal D\) is given by the Grassmannian of all 3-dimensional subspaces of \(H^2(\mathcal E; \mathbb{R})\), together with a suitable non-degeneracy condition. In the \(\ALF\) case, the situation is similar, but there is an additional continuous parameter corresponding to the asymptotic length of the circle fibration.

In the remaining cases, the statements are more intricate, and we refer to the references above for precise formulations. The proofs of the Torelli theorems in these settings rely crucially on the compactification to rational elliptic surfaces discussed above, as well as on gluing constructions involving K3 manifolds, which we describe below.

\subsection{Connections to compactfied moduli space of K3}\label{ss:GI and K3}
Denote by $$\mathcal M:=\widehat{\mathcal M}/\text{Diff}(\mathcal K)$$ the  space of isometry classes of unit diameter hyperk\"ahler metrics on $\mathcal K$. There is a natural Gromov-Hausdorff compactification $\overline{\mathcal M}^{\GH}$. There is a subset $\overline{\mathcal M}^{\partial}$ consisting of non-collapsed Gromov-Hausdorff limits, and the complement $\overline{\mathcal M}^{\GH}\setminus \overline{\mathcal M}^{\partial}$ consists of \emph{collapsed} Gromov-Hausdorff limits. 
Hyperk\"ahler gravitational instantons are closely related to understanding $\overline{\mathcal M}^{\GH}$ and the singularity formation of hyperk\"ahler metrics in $\mathcal M$. 

The \emph{gluing construction} allows one to realize certain hyperk\"ahler gravitational instantons as \emph{bubble limits}, i.e., rescaled pointed Gromov-Hausdorff limits, of degenerating hyperk\"ahler metrics on \(\mathcal K\). A prototypical example is provided by the \emph{metric} Kummer construction.

One begins with a flat 4-torus \(\mathbb{T}^4\) and considers its \(\mathbb{Z}_2\)-quotient by the standard involution. The quotient \(\mathbb{T}^4/\mathbb{Z}_2\) is then a hyperk\"ahler orbifold with 16 singularities, each locally modeled on the flat cone \(\mathbb{R}^4/\mathbb{Z}_2\). One can resolve these singularities by gluing in 16 copies of the Calabi-Eguchi-Hanson space, suitably rescaled so that the totally geodesic \((-2)\)-spheres have small area. A quantitative implicit function theorem is then applied to deform the resulting metric to a hyperk\"ahler metric on \(\mathcal K\).

As the areas of these \((-2)\)-spheres tend to zero, the metrics converge in the Gromov-Hausdorff sense to the orbifold limit. In particular, \(\mathbb{T}^4/\mathbb{Z}_2\) lies in \(\overline{\mathcal M}^{\partial}\), and the Calabi-Eguchi-Hanson space appears as a bubble limit of hyperk\"ahler metrics on \(\mathcal K\). 

More generally, gluing constructions for $\ALE$ hyperk\"ahler gravitational instantons are well studied (see, for example, \cite{Topiwala1987KEK3, Donaldson2012Kummer, OdakaOshima2021CollapsingK3}). The upshot is that, given any hyperk\"ahler orbifold, there exists a gluing construction that realizes it as an element of $\overline{\mathcal M}^\partial$, and $\ALE$ gravitational instantons arise as bubble limits. See also \cite{tazoe2025bubblinglimitsnoncollapsing} for determining these bubbles limits in case of algebraic degenerations of polarized K3 surfaces, in terms of period mappings. 

However, not all $\ALE$ gravitational instantons can arise in this way, since $\mathcal K$ has fixed topology. For example, if an $\ALE$-$A_k$ gravitational instanton bubbles off, then $k \leq 16$. It is therefore an interesting problem to determine which $\ALE$ gravitational instantons can occur; equivalently, this amounts to determining which $ADE$ singularities can appear on an orbifold $K3$ surface. 

\

There are extensive results on constructing examples of collapsing hyperk\"ahler metrics on $\mathcal K$. Since $\mathbb T^4/\mathbb Z_2$ lies in $\overline{\mathcal M}^{\GH}$ and clearly admits collapse to lower-dimensional spaces of the form $\mathbb T^k/\mathbb Z_2$ for $k = 1,2,3$, it follows that these limits also belong to $\overline{\mathcal M}^{\GH}$. More systematic constructions require gluing non-$\ALE$ hyperk\"ahler gravitational instantons.

More general three-dimensional limits in $\overline{\mathcal M}^{\GH}$ were constructed by Foscolo \cite{Fos19}, where the nontrivial bubble limits are given by $\ALF$-$A_k$ or $D_k$ gravitational instantons. Again, there are bounds on $k$ arising from the fixed topology.

Two-dimensional limits in $\overline{\mathcal M}^{\GH}$ can be obtained by collapsing the fibers of an elliptic $K3$ surface, beginning with the work of Gross-Wilson \cite{GW00}, which considers an elliptic $K3$ surface with $24$ singular fibers of Kodaira type $I_1$. In this setting, away from the singular fibers the metrics are modeled on \emph{semi-flat metrics} introduced by Greene-Shapere-Vafa-Yau \cite{GreeneShapereVafaYau1990Stringy}, while the nontrivial bubble limits are given by the Taub-NUT metrics.

An interesting feature is the presence of an intermediate scale at which one observes an incomplete model metric, namely the \emph{Ooguri-Vafa metric}. This metric can be constructed by applying the Gibbons-Hawking ansatz to the Green's function on a domain in $\mathbb S^1 \times \mathbb R^2$ (note that this space is non-parabolic, so the Green's function cannot be globally positive).

The case of collapsing general elliptic $K3$ surfaces was studied in \cite{CVZ20} (see also earlier work \cite{GrossTosattiZhang2016GH, OdakaOshima2021CollapsingK3}), where the bubble limits around singular fibers with finite monodromy are shown to be $\ALG$ gravitational instantons.

A one-dimensional limit $(X_\infty, d_\infty)$ in $\overline{\mathcal M}^{\GH}$ must be isometric to the unit interval $[0,1]$. However, general theory also endows $X_\infty$ with additional structure, namely a \emph{renormalized limit measure} $\nu_\infty$. As shown in \cite{HondaSunZhang2019CollapsingHK}, there is a canonical affine structure on $X_\infty$, and with respect to an affine coordinate $z$ one can write the metric as
$$
g_\infty = c_1 L \, dz^2,
$$
and the renormalized limit measure as
$$
\nu_\infty = c_2 L \, dz,
$$
where $L$ is a piecewise linear function of $z$ and $c_1, c_2 > 0$ are constants.

The case where $L$ is constant was constructed by Chen-Chen \cite{Chen-Chen3} by gluing two $\ALH$ hyperk\"ahler gravitational instantons. From the complex-geometric point of view, this corresponds to doubling a rational elliptic surface to obtain an elliptic $K3$ surface.

The case of non-constant $L$ was constructed by Hein-Sun-Viaclovsky-Zhang \cite{HSVZ22} via a gluing construction of two $\ALH^*$ gravitational instantons with a nontrivial neck region. This neck region can be viewed as a counterpart of the Ooguri-Vafa metric, and can be constructed by applying the Gibbons-Hawking ansatz to the Green's function on a domain in $\mathbb T^2 \times \mathbb R$.

From the complex-geometric point of view, it is shown in \cite{SZ19} that this models Type $\mathrm{II}$ degenerations of polarized $K3$ surfaces. In fact, a similar picture extends to the so-called \emph{small complex structure limits} of higher-dimensional Calabi-Yau manifolds, although the construction of the neck region becomes significantly more involved.
A similar neck region was subsequently used in \cite{CVZ} to show that $\ALG^*$ gravitational instantons can arise as bubble limits of hyperk\"ahler metrics on $\mathcal K$.

As a converse to the above gluing constructions, the collapsing theory developed in \cite{SZ} provides a description of families of hyperk\"ahler metrics in $\mathcal M$ that undergo collapse. First, the possible limits in $\overline{\mathcal M}^{\GH} \setminus \overline{\mathcal M}^\partial$ are classified: they are either a flat $T^3/\mathbb Z_2$, a singular special K\"ahler metric on $S^2$ (with integral monodromy, as observed in \cite{Ouyang2025CollapsingK3Special}), or the unit interval $[0,1]$.

Furthermore, after passing to a subsequence, the collapse over the regular set of the limit occurs along a nilpotent fiber bundle, and the metrics can be perturbed to hyperk\"ahler metrics that are invariant under the nilpotent group action. The analysis is similar to that carried out in the proof of Theorem~\ref{thm:SZ}.

The collapsing theory developed in \cite{SZ} also provides a framework for controlling the period map $\mathcal P$ as one approaches the boundary of $\overline{\mathcal M}^{\GH}$. In particular, a strategy (see \cite{SZ}, Section 7) was proposed to study the relationship between $\overline{\mathcal M}^{\GH}$ and certain Satake compactifications of $\Aut(H^2(\mathcal K; \mathbb Z)) \setminus \mathcal D$. It is also possible to show that any  
degenerating sequence in $\mathcal M$ is given by a suitable generalization of the known gluing constructions, based on the analysis of bubble limits and classification of gravitational instantons (with possible orbifold singularities). 

Foundational work in this direction was carried out by Odaka-Oshima \cite{OdakaOshima2021CollapsingK3, odaka2020polystablelogcalabiyauvarieties, odaka2020pldensityinvarianttype}, where precise conjectural relationships were formulated and partially established. The techniques and strategy of \cite{SZ} were subsequently applied in \cite{Liu} to give a simple proof of the surjectivity of the period map for $K3$ surfaces, and later in \cite{OuyangTian2025CompactificationK3} to complete the proof of a conjecture from \cite{OdakaOshima2021CollapsingK3}.
It is plausible that these can also be applied to the study of moduli spaces of gravitational instantons (see Problem~\ref{p:moduli of GI}). For example, it is tempting to give a new proof of the Torelli theorem for hyperk\"ahler gravitational instantons, as in \cite{Liu}.

\subsection{General Type \texorpdfstring{$\I$}{I} case}\label{ss:general Type I}
As mentioned earlier, a Type $\mathrm{I}$ gravitational instanton $(M,g)$ need not be globally hyperk\"ahler.

If $\pi_1(M)$ is finite, which is the case when $(M,g)$ is non-collapsed, then the universal cover $(\widetilde M, \widetilde g)$ is a hyperk\"ahler gravitational instanton, and one may appeal to the classification theorem discussed above. The problem then reduces to studying free actions of finite groups on hyperk\"ahler gravitational instantons. This is completely understood in the non-collapsed case by the results of Souvaina \cite{Souvaina} and Wright \cite{Wright}. In a suitable complex structure, such spaces arise as smoothings of $\mathbb C^2/\Gamma$ for $\Gamma \subset U(2)$.

When $\pi_1(M)$ is infinite, the situation is more subtle.  Its universal cover carries a complete hyperk\"ahler metric, but can not have quadratic curvature decay, unless $(M, g)$ is flat.  At present, all known such examples are flat, so given by a quotient of $\R^4$.  See Question~\ref{q:Type I infinite fund}.

\section{Type \texorpdfstring{$\II$}{II} gravitational instantons} \label{s:Type II}

In this section we discuss Type II gravitational instantons. Such a metric $(M, g)$ is \textit{locally} Hermitian but \emph{not} locally K\"ahler. In the following we will focus on the case when $(M, g)$ is  Hermitian. As explained before, in general one may need to pass to a double cover. 

As in Section \ref{ss:Type II case} we denote by $g_K=\lambda^{2/3}g$ the associated K\"ahler metric.  A straightforward computation yields the equation
\begin{equation}\label{eqn:scalar curvature equation}6\Sc_{g_K}\Delta_{g_K}\Sc_{g_K}-12|\nabla_{g_K}\Sc_{g_K}|_{g_K}^2+\Sc_{g_K}^3=0.
\end{equation}
Since the curvature of $g$ decays at infinity, we know that the function $$|\Sc_{g_K}|=\lambda^{1/3}=(2\sqrt{6}|\W^{+}|_g)^{1/3}$$ tends to zero at infinity. A simple maximum principle argument then implies that $\Sc_{g_{K}}$ must be everywhere positive, and thus $$\Sc_{g_K}=\lambda^{1/3}.$$

\subsection{LeBrun-Tod ansatz}
\label{ss:lebrun-tod}
Given a Hermitian gravitational instanton $(M,g)$, one can locally perform a K\"ahler reduction to obtain an associated conformal K\"ahler metric $g_K$ with respect to the vector field $\mathbb K$. The corresponding moment map is given by $\xi = \Sc_{g_K}$. Let $x + i y$ be a local holomorphic coordinate on the complex quotient. Then, away from the zero set of $\mathbb K$, the metric $g_K$ can be locally expressed via the LeBrun-Tod ansatz \cite{LeBrun91, Tod26}.

Denote by $\eta$ the dual 1-form to $\mathbb K$, and set $$W:=|\mathbb K|_{g_K}^{-2}.$$ Then $g_K$ can be written in the form
\begin{equation}\label{eq:lebrun-tod kahler metric}
    g_K=Wd\xi^2+W^{-1}\eta^2+We^v(dx^2+dy^2),
\end{equation}
with 
\begin{equation}\label{eqn:linearized Toda}
    (We^v)_{\xi\xi}+W_{xx}+W_{yy}=0.
\end{equation}
Here,  $\eta$ satisfies that
\begin{equation}
    d\eta=(We^v)_{\xi} dxdy+W_xdyd\xi+W_yd\xi dx,
\end{equation}
and the complex structure is given by 
$$J:d\xi\mapsto -W^{-1}\eta,\ dx\mapsto -dy.$$
One can check (see \cite{BG}) that equation \eqref{eqn:linearized Toda} together with the condition that $g_K$ is conformal to a Ricci-flat metric are equivalent to that $v$  satisfies a \emph{twisted $SU(\infty)$ Toda }equation
\begin{equation}\label{eq:twisted toda equation}
    (e^v)_{\xi\xi}+v_{xx}+v_{yy}=-\xi We^v,
\end{equation}
and $W$ is determined by
\begin{equation}\label{eq:W in lebrun-tod}
    W=\frac{12}{\xi^3}-\frac{6v_\xi}{\xi^2}.
\end{equation}
By setting 
$$\varrho:=1/\xi,\ V:=\xi^2W,\ u:=v-4\log\xi,$$
one can also rewrite the above ansatz as
\begin{equation}\label{eq:lebrun-tod ricci flat metric}
    g=\varrho^2 g_K=V(d\varrho^2+e^u(dx^2+dy^2))+V^{-1}\eta^2,
\end{equation}
with $u$ satisfying the usual $SU(\infty)$ Toda equation
\begin{equation}\label{eq:toda equation}
    (e^u)_{\varrho\varrho}+u_{xx}+u_{yy}=0,
\end{equation}
and $V$ given by
\begin{equation}\label{eq:V in lebrun-tod}
    V=-12\varrho+6\varrho^2u_{\varrho}.
\end{equation}

\subsection{Asymptotic models}

In this subsection, we introduce some Ricci-flat model ends which are Hermitian.

\subsubsection{$\ALE$ models}

For a finite subgroup of $\Gamma\subset U(2)$ acting freely on $S^3$, we have the standard flat K\"ahler metric $g_\Gamma$ on $(\mathbb{C}^2\setminus\{0\})/\Gamma$. Now the conformal metric $$\bar g_\Gamma:=\frac{1}{r^4}g_{\mathbb{C}^2/\Gamma}$$
is also flat; indeed, under the isometry explicitly given by
$$y\mapsto \frac{y}{|y|^2},$$
$(\mathbb{C}^2\setminus\{0\})/\Gamma$ with the conformal metric $\bar g_{\Gamma}$ can be isometrically identified with $(\mathbb{R}^4\setminus\{0\})/\Gamma$, but the complex structure becomes  a non-standard one. In this way an $\ALE$-$\Gamma$ model end is Hermitian.

\subsubsection{$\ALF$ models}

Recall that the $\ALF$-$A_k$ $(k\in \mathbb Z)$ hyperk\"ahler model end is given by the Gibbons-Hawking ansatz applied to $\R^3$ with the harmonic function 
$$V=1+\frac{k+1}{2r}.$$  It has an explicit expression as
$$g=(1+\frac{k+1}{2r})(dr^2+r^2(d\theta^2+\sin^2\theta d\phi^2))+(1+\frac{k+1}{2r})^{-1}(dt+\frac{(k+1)\cos\theta}{2}d\phi)^2, \quad  r\in (0, \infty).$$
Here $0\leq\theta\leq\pi,0\leq\phi\leq2\pi$, and $0\leq t\leq2\pi$.

If we reverse the orientation, then the metric $g$ becomes Hermitian. Indeed, one can write down the associated complex structure explicitly as
$$J:dr\mapsto(1+\frac{k+1}{2r})^{-1}(dt+\frac{(k+1)\cos\theta}{2}d\phi),\ d\theta\mapsto-\sin\theta d\phi,$$
and the corresponding conformal extremal K\"ahler metric is given by 
$$g_K=(r+\frac{k+1}{2})^{-2}g.$$ 
In this way an $\ALF$-$A_{k}$ model end, with the reversed orientation, is Hermitian. A finite isometric quotient of this, which preserves the complex structure $J$, is then an $\ALF$ model end for Hermitian gravitational instantons.

Notice that the hyperk\"ahler $\ALF$-$D_k$ model ends, with the reversed orientation, are not Hermitian,  since the $\mathbb Z_2$ action does not preserve the complex structure $J$. On the other hand, there are $\ALF$ model ends which do not fall into the $A_k$ family.  As an example, one can consider the isometric quotient of the $\ALF$-$A_k$ end by the action $\phi\mapsto \phi+\pi,t\mapsto t+\pi$.

As a side remark, for the hyperk\"ahler $\ALF$-$A_k$ model end, there is another complex structure given by 
$$J':dr\mapsto-(1+\frac{k+1}{2r})^{-1}(dt+\frac{(k+1)\cos\theta}{2}d\phi),\ d\theta\mapsto-\sin\theta d\phi,$$
such that  the conformal metric $r^{-2}g$ is  scalar-flat K\"ahler.

\begin{remark}
 In \cite{LiSun2025}, the $\ALF$ model ends are divided into $\ALF^+$ and $\ALF^{-}$ types, corresponding to the hyperk\"ahler and Hermitian orientations, respectively. In this survey, we do not make this distinction.
\end{remark}

\subsubsection{$\AF$ models}
For $\beta\in [0, 1)$, an $\AF_\beta$ model end is the quotient of the form $$\mathbb R^4/\mathbb Z=(\mathbb R^3\times \mathbb R^1)/\mathbb Z=\mathbb{R}^1\times(\mathbb{R}^2\times\mathbb{R}^1)/\mathbb{Z}, $$ where the generator of $\mathbb{Z}$ acts as a rotation by $2\pi\beta$ on $\mathbb{R}^2$ and a translation by $1$ on $\mathbb{R}^1$.  When $\beta=0$ this agrees with the $\ALF$-$A_{-1}$ model. For $\beta\neq 0$, the $\AFa$ model is locally hyperk\"ahler, but not hyperk\"ahler. 

The $\AFa$ model ends exhibit interesting asymptotic behavior at infinity. When $\beta = p/q$ for coprime integers $p,q$, the space has an asymptotic cone given by $\mathbb C_{1/q} \times \mathbb R$, where $\mathbb C_{1/q}$ denotes the two-dimensional flat cone with cone angle $2\pi/q$. When $\beta \notin \mathbb Q$, the space instead has an asymptotic cone given by the two-dimensional flat half-plane $\mathbb H$.

With respect to the decomposition $\mathbb R^4 = \mathbb R^3 \times \mathbb R$, let $\rho$ denote the radial coordinate on $\mathbb R^3$ and $t$ the coordinate on the $\mathbb R$ factor. Let $\mathbb S^2$ be the unit sphere in $\mathbb R^3$. Then the flat metric on $\mathbb R^4$ can be written as
$$
d\rho^2 + \rho^2 g_{\mathbb S^2} + dt^2.
$$
There exists a complex structure $J$ such that $J(d\rho) = -dt$, and whose restriction to $\mathbb S^2$ agrees with the standard complex structure. With respect to $J$, the conformally rescaled metric $\rho^{-2} g_{\mathbb R^4}$ is K\"ahler. In particular, the $\AF_\beta$ model end is Hermitian.
\ 

There are two non-standard asymptotic models that can be obtained from explicit solutions of \eqref{eq:toda equation}.

\subsubsection{Special Kasner models}
Let $u = \log(-\varrho)$, and define on $(-\infty,-1) \times \mathbb{R}^3$ the \emph{special Kasner metric} by
\begin{equation}\label{kasner}
g_{\text{Kasner}} = -6\varrho \bigl(d\varrho^2 - \varrho (dx^2 + dy^2)\bigr) - \frac{1}{6\varrho} \, dt^2.
\end{equation}
A special Kasner model end is obtained by taking the quotient of this space by a lattice $\Lambda \subset \mathbb{R}^3$. This metric is Hermitian with respect to both orientations.

This Type $\mathrm{II}$ metric belongs to the Kasner family of Ricci-flat metrics, which are of the form
\begin{equation}\label{eqn:Kasner}
dr^2 + r^{2p_1} d\theta_1^2 + r^{2p_2} d\theta_2^2 + r^{2p_3} d\theta_3^2,
\end{equation}
where $p_1, p_2, p_3 \in \mathbb{R}$ satisfy
\begin{equation}
    p_1 + p_2 + p_3 = 1, \qquad p_1^2 + p_2^2 + p_3^2 = 1.
\end{equation}
The metric in \eqref{kasner} corresponds to the case $p_1 = p_2 = \tfrac{2}{3}$ and $p_3 = -\tfrac{1}{3}$.

It has quadratic volume growth and exactly quadratic curvature decay.	

\subsubsection{Anti-$\ALH^*$ models}
We take $u = 0$ and define on $(-\infty,-1) \times \mathbb{R}^3$ the metric
\begin{equation}\label{Anti ALH* model}
g_{\ALH^*} = -12\varrho \bigl(d\varrho^2 + dx^2 + dy^2\bigr) - \frac{1}{12\varrho}\bigl(dt - 12x\,dy\bigr)^2.
\end{equation}
This metric is isometric to the hyperk\"ahler $\ALH^*$ model defined earlier, but is Hermitian with respect to the opposite orientation. An anti-$\ALH^*$ model is obtained by taking a quotient of $g_{\ALH^*}$ by a cocompact lattice $\Gamma$ in the Heisenberg group.

It has $\frac{4}{3}$ volume growth rate and exactly quadratic curvature decay.

\begin{remark}
    Notice that for the special Kasner model and the anti-$\ALH^*$ models, the scalar curvature of the associated conformal K\"ahler metric $g_K$ is negative. So by the discussion at the beginning of this section, they cannot serve as the asymptotic model of a Hermitian gravitational instanton. 
\end{remark}

\subsection{Examples}

Some hyperk\"ahler gravitational instantons can be viewed as Type $\mathrm{II}$ metrics with respect to the reversed orientation. This includes the Calabi-Eguchi-Hanson metric on $T^*S^2$ with reversed orientation, which is $\ALE$, and the anti-Taub-NUT metric on $\mathbb R^4$, which is $\ALF$.

\

 Page \cite{PAGE1978249} discovered a “Taub-NUT instanton with horizon,” now commonly referred to as the \emph{Taub-Bolt metric}. In this construction, the \emph{nut} (i.e., the fixed point of the $S^1$ action on $\mathbb R^4$) is replaced by a \emph{bolt}, namely an $S^2$. The underlying topology is $\mathbb{CP}^2 \setminus \{\mathrm{pt}\}$. The metric is of cohomogeneity one and admits the explicit form
\begin{equation}
ds^{2}
= \frac{r^{2}-1}{r^{2}+1-\frac{5}{2}r}\,dr^{2}
+ \frac{r^{2}+1-\frac{5}{2}r}{r^{2}-1}\,(d\psi + 2\cos\theta\, d\phi)^{2}
+ (r^{2}-1)\,(d\theta^{2}+\sin^{2}\theta\, d\phi^{2}).
\tag{4.35}
\end{equation}
The coordinates $(r,\theta)$ satisfy $r \ge r_{0}$ and $0 \le \theta \le \pi$, where $r_{0}$ is the larger root of $r^{2}+1-\frac{5}{2}r$. The coordinates $(\psi,\phi)$ satisfy the periodicity conditions
$$
(\psi,\phi)\sim(\psi+8\pi,\phi), \qquad (\psi,\phi)\sim(\psi+4\pi,\phi+2\pi).
$$
One can verify that the Taub-Bolt metric is of Type $\mathrm{II}$ for both orientations. It is asymptotic to the Taub-NUT metric and is therefore $\ALF$. With respect to the orientations $dr \wedge d\theta \wedge d\phi \wedge d\psi$ and $-dr \wedge d\theta \wedge d\phi \wedge d\psi$, the underlying complex surfaces can be identified with $\mathcal{O}(-1)$ and $\mathcal{O}(1)$, respectively. We refer to the former as the anti-Taub-Bolt gravitational instanton and the latter as the Taub-Bolt gravitational instanton.

\

The \emph{Kerr} family of gravitational instantons arises from the Kerr solutions to the vacuum Einstein equations via a Wick rotation (i.e., replacing $t \mapsto i t$) \cite{GibbonsHawking1977}. Modulo scaling, this yields a one-parameter family of metrics on $S^2 \times \mathbb R^2$, parametrized by $\beta \in [0,1)$, each of which is asymptotic to the $\AF_\beta$ model end. When $\beta = 0$, one recovers the \emph{Schwarzschild} metric.

An explicit formula is available (see, for example, \cite{ChenTeo2}):
\begin{equation}
g_a = \frac{\Delta}{\Sigma} (d\phi_1 + a \sin^2 \vartheta \, d\phi_2)^2 
+ \frac{\sin^2 \vartheta}{\Sigma} \bigl(a \, d\phi_1 - (s^2 - a^2) d\phi_2\bigr)^2  
+ \Sigma\left(\frac{1}{\Delta} ds^2 + d\vartheta^2\right),
\label{eq:kerr_instanton}
\end{equation}
where
\begin{equation}
\Sigma = s^2 - a^2 \cos^2 \vartheta, \qquad \Delta = s^2 - 2ms - a^2.
\end{equation}
Here $a \in \mathbb R$, $s \ge s_a$, and $\vartheta \in [0,\pi]$, where
\[
s_a = m + \sqrt{m^2 + a^2}.
\]
We fix $m = \tfrac{1}{2}$ to remove the scaling freedom.

One can verify that the Kerr family consists of Type $\mathrm{II}$ metrics for both orientations.

\

A surprising discovery by Chen-Teo \cite{Chen-Teo} is the existence of a one-parameter family (modulo scaling) of gravitational instantons on $(S^2 \times \mathbb R^2)\sharp \overline{\mathbb{CP}}^2$, parametrized by $\beta \in (0,1)$. These metrics are constructed via the inverse scattering transform and admit an explicit expression in terms of rational functions. It was reconstructed using toric K\"ahler geometry by Biquard-Gauduchon \cite{BG}, see the discussion in Section \ref{ss:BG}. There is also the study of Chen-Teo gravitational instanton from the twistor theory point of view  \cite{DunajskiTod2024TwistorChenTeo}.

It was shown by Aksteiner-Andersson \cite{AA} that the Chen-Teo metrics are of Type $\mathrm{II}$ for exactly one choice of orientation. With this choice, they are referred to as the Chen-Teo gravitational instantons.

\subsection{The results of Biquard-Gauduchon}\label{ss:BG}

One observes that all the examples above are \emph{toric}, in the sense that there is a $\mathbb T^2$-action preserving both the metric and the complex structure. In \cite{BG}, Biquard-Gauduchon provided a unified construction and interpretation of toric Type $\mathrm{II}$ gravitational instantons with $\ALF$ or $\AF$ asymptotics.  In this subsection, we briefly sketch their construction. Note that the terminology in \cite{BG} differs from ours: both $\ALF$ and $\AF$ asymptotics are referred to there as $\ALF$.

As we have seen in Section~\ref{ss:lebrun-tod}, a Hermitian gravitational instanton $(M,g)$ already admits a Killing field $\mathbb{K}$. Under an additional symmetry, by choosing the coordinate $x+iy$ appropriately, one can further assume in the LeBrun-Tod ansatz \eqref{eq:lebrun-tod ricci flat metric}-\eqref{eq:V in lebrun-tod} that the functions $u$ and $V$ are independent of the variable $y$. As observed by Ward \cite{Ward}, a B\"acklund transformation then shows that local solutions to the reduced $SU(\infty)$ Toda equation
\begin{equation}\label{eqn:reduced Toda}
(e^u)_{\varrho\varrho} + u_{xx} = 0
\end{equation}
correspond to axisymmetric harmonic functions $U$ on $\mathbb{R}^3$, i.e.,
\begin{equation}\label{eqn:axisymmetric harmonic}
\frac{1}{\rho}(\rho U_\rho)_{\rho} + U_{zz} = 0,
\end{equation}
where $\rho$ denotes the distance to the $z$-axis. The correspondence is given by
$$
\varrho = \tfrac{1}{2}\rho U_\rho, \qquad x = -\tfrac{1}{2}U_z, \qquad u = \log \rho^2.
$$
Conversely, any local solution $u$ to \eqref{eqn:reduced Toda} arises from such a construction. Note that \eqref{eqn:reduced Toda} is nonlinear, whereas \eqref{eqn:axisymmetric harmonic} is linear.

Therefore, in the toric setting, the LeBrun-Tod ansatz can be further reduced to axisymmetric harmonic functions. Recall that the ansatz \eqref{eq:lebrun-tod ricci flat metric}-\eqref{eq:V in lebrun-tod} is obtained by performing K\"ahler reduction with respect to the Killing field $\mathbb{K}$. To align with the notation of Biquard-Gauduchon \cite{BG}, we instead consider the rescaled Killing field
$$
X := \sqrt{6k}\,\mathbb{K},
$$
for a nonzero constant $k$. The only change in the formulas \eqref{eq:lebrun-tod ricci flat metric}-\eqref{eq:V in lebrun-tod} is that \eqref{eq:V in lebrun-tod} becomes
\begin{equation}
    V = \frac{1}{k}\bigl(-2\varrho + 6\varrho^2 u_{\varrho}\bigr).
\end{equation}

By the above discussion, starting from an axisymmetric harmonic function $U$, one can write the Ricci-flat metric $g$ in the form
\begin{equation}\label{eq:harmonic functioin ansatz}
    g = e^{2\nu}(d\rho^2 + dz^2) + V\rho^2 dy^2 + V^{-1}(dt - F\,dy)^2,
\end{equation}
where
\begin{align}
    e^{2\nu} &= \frac{1}{4} V\rho^2\bigl(U_{\rho z}^2 + U_{zz}^2\bigr),\\
    V &= -\frac{1}{k}\left(\rho U_\rho + \frac{U_\rho^2 U_{zz}}{U_{\rho z}^2 + U_{zz}^2}\right),\\
    F &= -\frac{1}{k}\left(-\frac{\rho U_\rho^2 U_{\rho z}}{U_{\rho z}^2 + U_{zz}^2} + \rho^2 U_z + 2H\right).
\end{align}
Here $H$ is a harmonic conjugate of $U$, and these expressions are valid on the domain where $V > 0$ and $U_{\rho z}^2 + U_{zz}^2 > 0$.

To understand the global geometry, one must impose appropriate behavior of the axisymmetric harmonic function $U$ near the $z$-axis in $\mathbb{R}^3$. This was analyzed by Biquard-Gauduchon \cite{BG} using toric K\"ahler geometry. More precisely, the associated K\"ahler metric $(M,g_K)$ is toric, and hence corresponds to a convex unbounded Delzant polytope $\mathbf{\Delta} \subset \mathbb{R}^2$, together with a K\"ahler potential defined on its interior $\mathbf{\Delta}^\circ$. The smoothness of the metric and the $\ALF/\AF$ asymptotics are encoded in the behavior of the potential near $\partial \mathbf{\Delta}$ and at infinity.

It was shown that $(\rho,z)$ defines a diffeomorphism from $\mathbf{\Delta}$ onto the right half-plane $\mathbb{H}$, which can be viewed as the quotient of $\mathbb{R}^3$ by rotations about the $z$-axis. Thus, the axisymmetric harmonic function $U$ is globally defined on $\mathbb{H}^\circ$. By relating the above ansatz to the K\"ahler potential, it was proved in \cite{BG} that the behavior of $U$ near the $z$-axis is given as follows:
\begin{proposition}
There exists a convex piecewise linear function $f(z) > 0$, whose slope increases from $-1$ to $1$, such that near the $z$-axis, $U - f(z)\log \rho^2$ remains bounded.
\end{proposition}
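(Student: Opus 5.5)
I would prove this by passing through toric Kähler geometry, as in \cite{BG}. The Hermitian gravitational instanton $(M,g)$ is assumed toric, so the conformal Kähler metric $g_K=\lambda^{2/3}g$ is a toric Kähler metric. By the Abreu--Guillemin description it is encoded by a symplectic potential $\phi$ on the interior of an unbounded convex Delzant polytope $\mathbf{\Delta}\subset\R^2$ in moment coordinates $(\mu_1,\mu_2)$, and smoothness of $g_K$ up to the boundary forces $\phi-\tfrac12\sum_i \ell_i\log \ell_i$ to be smooth on $\mathbf{\Delta}$, where the $\ell_i$ are the affine defining functions of the edges. Here $\mathbb K$ is one of the torus generators, so its moment map $\xi=\Sc_{g_K}$ is one of the affine coordinates, up to normalization. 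The other torus direction corresponds to $\p_y$ in the LeBrun--Tod ansatz. The plan is to write the Ward/Bäcklund variables $(\rho,z)$ and the function $U$ explicitly in terms of $\phi$, and then read off the singular behaviour of $U$ at $\rho=0$ from the $\ell_i\log\ell_i$ terms.

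\textbf{Step 1: identify $\rho$ and $U$ on $\mathbf{\Delta}$.} Since $u=\log\rho^2$ and $e^u$ is the conformal factor of the quotient metric in \eqref{eq:lebrun-tod ricci flat metric}, $\rho^2$ should equal, up to the factor $\varrho^2=\xi^{-2}$, the determinant of the Gram matrix of the two Killing fields, namely $\rho^2\sim \det(\mathrm{Hess}\,\phi)^{-1}$ up to explicit powers of $\xi$. I would verify this and then define $U$ through $\varrho=\tfrac12\rho U_\rho$, $x=-\tfrac12 U_z$, with $x$ expressed via $\p\phi/\p\mu$. The outcome should be a formula of the shape $U=\mathcal L(\phi)$: a Legendre-type expression in $\phi$ and its first derivatives, weighted by powers of $\xi$.

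\textbf{Step 2: boundary asymptotics.} Near the interior of an edge $\ell_i=0$, we have $\det\mathrm{Hess}\,\phi\sim c/\ell_i$, so $\rho^2\sim c\,\ell_i$. The $\tfrac12\ell_i\log\ell_i$ term in $\phi$ contributes to $U$ a term $a_i(z)\log\ell_i$, hence $a_i(z)\log\rho^2+O(1)$. The coefficient comes from the derivatives $\p_\mu\ell_i$, which are affine along the edge, and so, after changing to the coordinate $z$, it is linear in $z$ on each edge. This gives a piecewise linear $f(z)$, one linear piece for each edge. Vertices map to points on the $z$-axis, and because $\phi$ is smooth across them after subtracting the log terms, $f$ is continuous there. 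Convexity of $f$ should follow from convexity of $\mathbf{\Delta}$, since the slope jumps are determinants of consecutive primitive edge normals, and the Delzant condition makes these equal to $1$. Positivity of $f$ should come from the sign $\Sc_{g_K}>0$ established at the start of this section.

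\textbf{Step 3: the two unbounded edges.} The slopes $\mp1$ at $z\to\mp\infty$ are fixed by the $\ALF/\AF$ asymptotics. On the model ends the harmonic function is explicit, namely the Newtonian potential of a rod with unit density, which gives slope $\pm1$. Matching the asymptotics of $\phi$ at infinity in $\mathbf{\Delta}$ with the model then pins down the normals of the two unbounded edges.

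I expect the main obstacle to be Step 1 together with the uniformity in Step 2. Two things need care: the change of coordinates $\mu\mapsto(\rho,z)$ must be shown to be a homeomorphism of $\mathbf{\Delta}$ onto $\mathbb H$, and the remainder must stay bounded uniformly near the vertices, where two log terms interact. To handle the vertices I would first work in the smooth local model $\C^2$ near each fixed point, then use that $U-f\log\rho^2$ is axisymmetric harmonic away from the axis, and apply a removable singularity argument for bounded axisymmetric harmonic functions, which reduces to Bôcher's theorem on $\R^3$.
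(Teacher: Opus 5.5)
Your route is the one the paper attributes to \cite{BG}: relate the Ward/LeBrun--Tod data to the toric potential and read off the boundary behaviour. Two of your ingredients are correct. In moment coordinates $(\xi,\mu_2)$ dual to $(\mathbb K,\partial_y)$ one has, up to a constant, $\rho^2=\varrho^4/\det\mathrm{Hess}\,\phi\sim c\,\ell_i$ near an edge. Positivity of $f$ does come from $\Sc_{g_K}>0$.

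However, the step that carries the content of the proposition, piecewise linearity of $f$, is not established, and the mechanism you give cannot work. The $\partial_\mu\ell_i$ are constants, not affine functions. Since $\varrho=\frac12\rho U_\rho$, the coefficient of $\log\rho^2$ is forced to be the boundary value of $\varrho$, i.e.\ (up to a constant) of $1/\xi$. This is not affine along an edge on which $\xi$ varies. Conversely, if $f$ is linear with nonzero slope on such an edge, then $z$ is affine in $1/\xi$ there, so $z$ is \emph{not} an affine function of the moment coordinates along the edge. Thus ``affine in $\mu$, hence linear in $z$'' is not the reason, and leaving $U=\mathcal L(\phi)$ unspecified hides exactly the computation that matters.

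The missing idea is the other half of Ward's correspondence. Up to normalizing constants, $x=-\frac12 U_z$ is the Legendre-dual coordinate $\partial\phi/\partial\mu_2$, i.e.\ the real part of the holomorphic coordinate $\partial\phi/\partial\mu_2+\sqrt{-1}\,t_2$ on the K\"ahler quotient by $\mathbb K$. The term $\frac12\ell_i\log\ell_i$ then gives $x=\frac12(\partial_{\mu_2}\ell_i)\log\rho^2+O(1)$ near the $i$-th edge. The identity $\varrho_z=-\rho\,x_\rho$ (equality of mixed partials of $U$) then yields $f'=\partial_z\varrho|_{\rho=0}=-\lim_{\rho\to0}\rho\,x_\rho=-\partial_{\mu_2}\ell_i$, which is constant on the edge. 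So the logarithmic terms control $f'$, not $f$: up to normalization, $f'_i$ is the $\partial_y$-component of the primitive normal. This agrees with the rod vectors $\overline{\mathfrak v}_j=f'_j(\partial_y+F_j\partial_t)$. Boundedness of $U-f\log\rho^2$ near the open edges then follows by integrating $U_\rho=2\varrho/\rho$.

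Two further steps fail as stated.

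\emph{Convexity.} The jumps of $f'$ are not determinants of consecutive primitive normals equal to $1$. The total change of slope is $2$, so with three turning points (Chen--Teo) the three positive jumps cannot all equal $1$. The correct vertex relation, consistent with the formulas for $F_j$ recorded in the paper, is $\det(\overline{\mathfrak v}_{j-1},\overline{\mathfrak v}_j)=\frac1A f(z_j)^2\,(f'_{j-1}-f'_j)$ in the basis $(\partial_y,\partial_t)$. Convexity does follow from the constant sign of these determinants. But it needs a vertex computation of this kind, which is where $\xi$ enters and which your proposal does not supply.

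\emph{Vertices.} $U-f(z)\log\rho^2$ is not axisymmetric harmonic off the axis. There $\Delta\bigl(f(z)\log\rho^2\bigr)=f''(z)\log\rho^2$, and the kinks of $f$ put a nonzero distribution on the whole planes $\{z=z_j\}$. Moreover, a removable-singularity theorem for bounded harmonic functions presupposes the boundedness you are trying to prove. Instead, subtract the harmonic comparison function $A\log\rho^2+\sum_j a_jU_0(\rho,z-z_j)$, which differs from $f(z)\log\rho^2$ by a function bounded near the axis. The difference is harmonic off the axis and bounded near the open edges, which are polar and hence removable. What remains is an isolated point at each vertex. It is removable once you have a growth bound $o(d_j^{-1})$, with $d_j$ the distance in $\R^3$ to the vertex, and such a bound should come from the two-term logarithmic expansion of $\phi$ there.
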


Biquard-Gauduchon \cite{BG} then introduced the basic axisymmetric harmonic function
$$
U_0(\rho,z) = 2r - z \log \frac{r+z}{r-z},
$$
which corresponds geometrically to the flat metric and has asymptotic behavior near the $z$-axis determined by the piecewise linear function
$$
f(z) = |z|.
$$
The behavior of $U$ at infinity can similarly be shown to satisfy
$$
U - U_0 = o(r).
$$
By superposing such basic solutions, it follows that for any convex piecewise linear function
$$
f(z) = A + \sum_{j=1}^n a_j |z - z_j|,
$$
with $A > 0$, $z_1 < \cdots < z_n$, and slope increasing from $-1$ to $1$ (so that each $a_j \in (0,1)$ and $\sum a_j = 1$), one can associate a canonical axisymmetric harmonic function
\begin{equation}\label{eq:axisymmetric harmonic U}
    U = A \log \rho^2 + \sum_{j=1}^n a_j U_0(\rho, z - z_j).
\end{equation}
Using the ansatz \eqref{eq:harmonic functioin ansatz} and choosing $k = 2A$, which ensures that $|X|_g^2 = V^{-1} \to 1$ at infinity, one obtains a Hermitian Ricci-flat metric defined on $\mathbb{H}^\circ \times \mathbb{R}^2$, admitting an $\mathbb{R}^2$-translation symmetry. Taking the quotient of $\mathbb{R}^2$ by a suitable lattice $\Lambda$, the metric completion of $(\mathbb{H}^\circ \times (\mathbb{R}^2/\Lambda), g)$ yields a smooth $4$-manifold.

\begin{figure}[ht]
    \begin{tikzpicture}[scale=0.45]
        \draw (0,12)--(0,0);
        \filldraw[color=gray!50,fill=gray!50,opacity=0.3](0,12)--(0,0)--(5,0)--(5,12)--(0,12);
        \node at (0,9) {$\bullet$};
        \node at (-1,9) {$z_1$};
        \node at (0,7) {$\bullet$};
        \node at (-1,7) {$z_2$};
        \node at (0,3) {$\bullet$};
        \node at (-1,3) {$z_3$};

        \draw[-Stealth] (10,6)--(6,6);
        \node at (8,7) {$(\rho,z)$};

        \draw (18,9)--(11,9)--(11,7)--(15,3)--(18,3);
        \filldraw[color=gray!50,fill=gray!50,opacity=0.3](18,9)--(11,9)--(11,7)--(15,3)--(18,3);
        \node at (11,9) {$\bullet$};
        \node at (11,7) {$\bullet$};
        \node at (15,3) {$\bullet$};
        \node at (10,9) {$z_1$};
        \node at (10,7) {$z_2$};
        \node at (14,3) {$z_3$};

    \end{tikzpicture}
	\caption{The diffeomorphism $(\rho,z)$.}
	 \label{Figure:identification between H and polytope}   
\end{figure}

Each interval $(z_j, z_{j+1})$ along the $z$-axis is identified, via the diffeomorphism $(\rho,z)$, with an edge of the polytope $\mathbf{\Delta}$. Let $f_j'$ denote the slope of $f$ on $(z_j, z_{j+1})$. By standard toric geometry, each edge of $\mathbf{\Delta}$ determines a Killing field, which in our setting is a linear combination of $\partial_y$ and $\partial_t$ and vanishes along that edge. A direct computation shows that these vectors are explicitly given by
\begin{equation}\label{eq:BG normailzed rod vector}
    \overline{\mv}_j =
    \begin{cases}
         \overline{\mathfrak{v}}_j = f_j'(\partial_{y} + F_j \partial_t), & \text{if } f_j' \neq 0, \\[0.5em]
         \overline{\mathfrak{v}}_j = \dfrac{1}{A} f_j^2 \partial_t, & \text{if } f_j' = 0.
    \end{cases}
\end{equation}
Here the constants $F_j$ are determined by the following conditions:
\begin{itemize}
\item
\[
F_0 = -\frac{1}{A}\Bigl(A + \sum_{j=1}^n a_j z_j\Bigr)^2 + \frac{1}{A} \sum_{j=1}^n a_j z_j^2;
\]
\item if $f_j' \neq 0$, then
\begin{equation}
    F_j - F_{j-1} = \frac{1}{A} f_j^2\left(\frac{1}{f_j'} - \frac{1}{f_{j-1}'}\right);
\end{equation}
\item if $f_j' = 0$, then
\begin{equation}
    F_{j+1} - F_{j-1}
    = \frac{1}{A}\left(f_j^2\left(\frac{1}{f_{j+1}'} - \frac{1}{f_{j-1}'}\right) - 2(z_{j+1} - z_j) f_j\right).
\end{equation}
\end{itemize}
In particular, these relations imply
\[
F_n = \frac{1}{A}\Bigl(A - \sum_{j=1}^n a_j z_j\Bigr)^2 - \frac{1}{A} \sum_{j=1}^n a_j z_j^2.
\]

Locally, near each edge of $\mathbf{\Delta}$, the Ricci-flat metric extends smoothly to a $4$-manifold if and only if the vector $\overline{\mv}_j$ is primitive in the lattice $\Lambda$, and any two adjacent vectors $\overline{\mv}_j, \overline{\mv}_{j+1}$ generate $\Lambda$.

Based on these strong constraints, \cite{BG} obtained a classification of toric Hermitian $\ALF/\AF$ gravitational instantons:
\begin{theorem}[\cite{BG}]\label{thm:BG}
A toric Hermitian $\ALF/\AF$ gravitational instanton must be, up to scaling, one of the following:
\begin{itemize}
    \item the anti-Taub-NUT space;
    \item the $\AF_\beta$ Kerr space for some $\beta \in [0,1)$;
    \item the $\AF_\beta$ Chen-Teo space for some $\beta \in (0,1)$;
    \item the Taub-Bolt or anti-Taub-Bolt space.
\end{itemize}
\end{theorem}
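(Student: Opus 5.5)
The plan is to reduce everything to the combinatorics of the convex piecewise linear function $f$ attached to $U$ in the Proposition above, and then to read off the admissible configurations from the lattice conditions on the rod vectors \eqref{eq:BG normailzed rod vector}. The first step is the analytic one. Given a toric Hermitian $\ALF/\AF$ gravitational instanton, I would pass to the associated toric extremal K\"ahler metric $g_K$ and its Delzant polytope $\mathbf{\Delta}$, identify $\mathbf{\Delta}$ with $\mathbb H$ via $(\rho,z)$, and obtain the axisymmetric harmonic function $U$ with $U-f(z)\log\rho^2$ bounded near the axis and $U-U_0=o(r)$ at infinity. Write $f=A+\sum a_j|z-z_j|$. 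The difference
$$U-A\log\rho^2-\sum_j a_jU_0(\rho,z-z_j)$$
is then an axisymmetric harmonic function on $\mathbb R^3$ minus the axis, bounded near the axis and $o(r)$ at infinity. A removable singularity argument (bounded harmonic functions extend across a codimension-two set) followed by Liouville shows that this difference is constant. Hence $U$ has exactly the canonical form \eqref{eq:axisymmetric harmonic U}, with $k=2A$ fixed by the normalization $|X|_g\to1$. This uniqueness step is the place where I expect the main analytic obstacle: controlling $U$ near the corner points $z_j$ and converting the $\ALF/\AF$ decay of $g$ into the growth bound for $U$ both require care.

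The second step is algebraic. The metric is now determined by the finite data $(A,z_j,a_j)$ and the lattice $\Lambda$. Smoothness requires that each $\overline{\mv}_j$ be primitive and that consecutive vectors form a $\mathbb Z$-basis of $\Lambda$. In particular
$$\det(\overline{\mv}_j,\overline{\mv}_{j+1})=\pm\det\Lambda \quad\text{for all } j.$$
Using the explicit formulas for the $F_j$, I would compute these determinants as functions of the slopes $f_j'$ and the values $f_j$. The asymptotic conditions further constrain the end vectors $\overline{\mv}_0,\overline{\mv}_n$: in the $\ALF$ case $\overline{\mv}_0+\overline{\mv}_n$ must be a multiple of $\partial_t$, while in the $\AF_\beta$ case they must differ by the twist $\beta$.

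The third step is the case analysis on the number $n$ of corners.
\begin{itemize}
\item For $n=1$ there is only one corner, which gives the anti-Taub-NUT space.
\item For $n=2$ one gets either the Kerr family, when no slope vanishes, or Taub-Bolt/anti-Taub-Bolt, when the middle slope is $0$ and the $\partial_t$ rod appears.
\item For $n=3$ one gets Chen-Teo.
\end{itemize}
The hard part is showing that $n\ge4$ is impossible. The determinant equalities give $n-1$ equations in roughly $2n-1$ continuous unknowns, so mere counting is not enough. I would try to exploit that every term in the determinants is a positive expression in the $a_j$ and the gaps $z_{j+1}-z_j$. Since $\sum a_j=1$ with $a_j\in(0,1)$, I expect the successive determinants to be strictly monotone along the chain of rods when $n\ge4$, which would contradict the requirement that they all be equal. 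The same monotonicity argument should also pin down the parameters within each family, leaving the one-parameter families in $\beta$ as the only freedom up to scaling.
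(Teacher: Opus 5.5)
Your first two steps follow the route the paper sketches: reduction to the axisymmetric harmonic function $U$, the Proposition on $f$, $U-U_0=o(r)$, superposition to the canonical form \eqref{eq:axisymmetric harmonic U}, and the lattice criterion for the normalized vectors \eqref{eq:BG normailzed rod vector}. The gap is in the third step: the monotonicity you hope to use to exclude $n\ge 4$ is false. In the basis $(\partial_y,\partial_t)$, for nonzero slopes $\det(\overline{\mv}_{j-1},\overline{\mv}_j)=f'_{j-1}f'_j(F_j-F_{j-1})$. Using the stated recursion for the $F_j$ (and the second formula when a slope vanishes), one gets in all cases
\[
\det(\overline{\mv}_{j-1},\overline{\mv}_j)=-\frac{2a_j}{A}\,f(z_j)^2 .
\]
So all these determinants have the same sign, and requiring them to equal $\pm\det\Lambda$ says exactly that $a_jf(z_j)^2$ is independent of $j$. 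Neither positivity nor $\sum a_j=1$ obstructs this. Take $z=(-3,-1,1,3)$ and $a=(a_1,a_2,a_2,a_1)$ with $a_1+a_2=\frac12$. Then $f$ is even, and the only remaining equation $(\frac12-a_1)(A+1+4a_1)^2=a_1(A+3)^2$ has a root $a_1\in(0,\frac12)$ by the intermediate value theorem. So the determinants are not monotone along the chain, and determinant equalities alone can neither bound $n$ nor pin down the families.

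The missing ingredient is the integrality hidden in ``consecutive vectors form a $\mathbb Z$-basis''. Since $(\overline{\mv}_{j-1},\overline{\mv}_j)$ is a basis and consecutive determinants coincide, $\overline{\mv}_{j+1}\in\Lambda$ forces $\overline{\mv}_{j-1}+\overline{\mv}_{j+1}=q_j\overline{\mv}_j$ with $q_j\in\mathbb Z$. Taking $\partial_y$-components gives $q_jf'_j=f'_{j-1}+f'_{j+1}$.

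If $f'_{j-1},f'_j,f'_{j+1}$ are all positive, then $q_j>1$, so $q_j\ge2$, which means $a_{j+1}\ge a_j$. But $f(z_{j+1})>f(z_j)$ together with $a_jf(z_j)^2=a_{j+1}f(z_{j+1})^2$ forces $a_{j+1}<a_j$. Three negative slopes give the mirror contradiction. Hence at most two slopes of each sign occur, so $n\le 4$. For $n=4$ this forces $f'_2=0$ and $q_1=-1/f'_1\ge 2$, i.e. $a_1\ge\frac14\ge a_2$, contradicting $a_1f(z_1)^2=a_2f(z_2)^2$ with $f(z_1)>f(z_2)$.

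The same two ingredients settle $n=3$. A zero slope would require $f'_0+f'_2=0$ or $f'_1+f'_3=0$, which is impossible. With $f'_1<0<f'_2$ one must have $q_1=q_2=1$, giving the $\AF$ Chen--Teo family with $a_2=\frac12$, since $q_1,q_2\ge2$ again violates $a_1f(z_1)^2=a_2f(z_2)^2$.

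Your $n=2$ case and asymptotic conditions also need correcting:
\begin{itemize}
\item If $f'_1\ne 0$, then $q_1=0$, i.e. $\overline{\mv}_2=-\overline{\mv}_0$. This is the $\AF$ condition $F_0=F_n$, giving Kerr with $\beta\neq0$. The twist $\beta$ is carried by the second generator of $\Lambda$, not by a mismatch of end vectors.
\item The relation $\overline{\mv}_0+\overline{\mv}_n\in\mathbb R\partial_t$ holds automatically, so it is not an $\ALF$ constraint.
\item If $f'_1=0$, then $q_1=2-2(z_2-z_1)/f(z_1)\in\{-1,0,1\}$. The value $q_1=0$ gives Schwarzschild, i.e. Kerr with $\beta=0$, so Kerr is not exactly the case ``no slope vanishes''. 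The values $q_1=\pm1$ give Taub-bolt and anti-Taub-bolt.
\end{itemize}
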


Moreover, \cite{BG} gives an explicit description of all these metrics in terms of their associated toric polytopes. From left to right, the corresponding (unbounded) polytopes are those of the anti-Taub-NUT, Kerr, Chen-Teo, anti-Taub-Bolt, and Taub-Bolt gravitational instantons.
\begin{figure}[ht]
    \begin{tikzpicture}[scale=0.6]
        \draw (0,0)--(3,0);
        \draw (0,0)--(0,3);
        \filldraw[color=gray!50,fill=gray!50,opacity=0.3](3,0)--(0,0) --(0,3);

        \draw (5,3)--(5,0);
        \draw (5,0)--(8,0);
        \draw (8,0)--(8,4);
        \filldraw[color=gray!50,fill=gray!50,opacity=0.3](5,3)--(5,0) --(8,0)--(8,4);

        \draw (10,3)--(10,0)--(11.5,0)--(13,1.5)--(13,4);
        \filldraw[color=gray!50,fill=gray!50,opacity=0.3] (10,3)--(10,0)--(11.5,0)--(13,1.5)--(13,4);

        \draw (15,4)--(15,1.5)--(16.5,0)--(19,0);
        \filldraw[color=gray!50,fill=gray!50,opacity=0.3](15,4)--(15,1.5)--(16.5,0)--(19,0);

        \draw (22,4)--(19.5,4)--(24,-0.5)--(24,2);
        \filldraw[color=gray!50,fill=gray!50,opacity=0.3](22,4)--(19.5,4)--(24,-0.5)--(24,2);
    \end{tikzpicture}
	\caption{The polytopes}
	 \label{Figure:polytopes}   
\end{figure}
There is also related work by \cite{OliveiraSenaDias}, which provides an alternative proof of the above toric classification under weaker assumptions at infinity.

There is a striking contrast between this result and the situation for Hermitian Einstein metrics with positive Einstein constant. For a Type $\mathrm{II}$ Einstein metric with positive Einstein constant, up to a finite cover, the metric is either K\"ahler or Hermitian. It was proved by LeBrun \cite{LeBrunEinsteinHermitian4Manifolds} that all Hermitian positive Einstein $4$-manifolds are, up to scaling, given by the Page metric on $\mathbb{CP}^2 \sharp \overline{\mathbb{CP}}^2$ \cite{Page}, and the Chen-LeBrun-Weber metric on $\mathbb{CP}^2 \sharp 2\overline{\mathbb{CP}}^2$ \cite{ChenLeBrunWeber}. An observation of LeBrun \cite{LeBrunEinsteinMetricsComplexSurfaces} further shows that all such metrics are toric.

Motivated by this comparison with the positive Einstein case, it was conjectured in \cite{AA} that all $\ALF$ and $\AF$ Hermitian gravitational instantons are likewise toric. In particular, this would imply that the list in Theorem~\ref{thm:BG} exhausts all $\ALF$ and $\AF$ Hermitian gravitational instantons. This conjecture was proved in \cite{Li2}; see also the discussion in Section~\ref{ss:Hermitian B}.

\subsection{Classification Results A: Asymptotic Structure}

As in the hyperk\"ahler case, we now have a complete classification of the asymptotic ends of Hermitian gravitational instantons. 

\begin{theorem}[Asymptotic Classification \cite{Li2}]\label{thm:Hermitian A}
    Let $(M, g)$ be a Hermitian non-K\"ahler gravitational instanton, then up to scaling it must be  $\ALE$, $\ALF$ or $\AF$.
\end{theorem}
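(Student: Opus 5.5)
The plan is to exploit the special structure of Hermitian Ricci-flat metrics, namely the conformal K\"ahler metric $g_K=\lambda^{2/3}g$ with $\Sc_{g_K}=\lambda^{1/3}>0$ and the Killing field $\mathbb K=-J\nabla_g\lambda^{-1/3}$. In the non-collapsed case the result of Bando-Kasue-Nakajima already gives $\ALE$, so we assume $(M,g)$ is collapsed. The key quantity is the function $\lambda^{-1/3}=\Sc_{g_K}^{-1}$. Its $g$-gradient has length $|\mathbb K|_g$, which is bounded since $\mathbb K$ is a Killing field on a manifold with curvature decay. We also know $\lambda=2\sqrt6|\W^+|_g=O(r^{-2})$.

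\textbf{Step 1: growth of $\lambda^{-1/3}$.} We first show that $\lambda^{-1/3}$ behaves like a distance function. From \eqref{eqn:scalar curvature equation}, or equivalently the Ricci-flat equation written in terms of $\lambda$, we expect a Bochner-type identity. One candidate is that $|\mathbb K|_g^2$ plus a multiple of $\lambda^{2/3}$ is harmonic or constant, in the spirit of the Kerr/Taub-NUT computations. Combining this with $\lambda\to0$ and the maximum principle, we aim to show that $|\mathbb K|_g\to c_0>0$ at infinity and that $\lambda^{-1/3}\sim c\,r$. This yields $|\W^+|\sim r^{-3}$ and a nondegenerate Killing field near infinity.

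\textbf{Step 2: asymptotic cones.} We then pass to asymptotic cones. Under rescaling, $\mathbb K$ has bounded length while distances blow up, so the orbits of the closure of the flow of $\mathbb K$ collapse. The quotient is described by the LeBrun-Tod ansatz \eqref{eq:lebrun-tod ricci flat metric}, with $\varrho=\lambda^{1/3}$ (after normalization) acting as a coordinate that tends to $0$ at infinity. We then analyze the Toda solution $u$ and the function $V=-12\varrho+6\varrho^2u_\varrho$ near $\varrho=0$, in analogy with the special affine structure used in \cite{SZ}. The goal is to show that $V\to$ a constant, which excludes models like the special Kasner and anti-$\ALH^*$ models (these have the wrong sign of $\Sc_{g_K}$) and forces $e^u$ to be asymptotically $\varrho^{-2}$ times a round-sphere factor. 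This gives cones of the form $\mathbb R^3/\Gamma$, $\mathbb C_{1/q}\times\mathbb R$, or a half-plane.

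\textbf{Step 3: conclusion.} We then upgrade to actual asymptotics. The closure of the $\mathbb K$-flow is either a circle (giving $\ALF$, via Gibbons-Hawking-like control of $V=1+\frac{m}{r}+\dots$) or a torus with irrational slope (giving $\AF_\beta$ after identifying the rotation angle). To get decay rates, we would use a perturbation argument as in Theorem \ref{thm:perturbation} together with weighted analysis of the linearized Toda equation.

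The main obstacle is Step 1/2: ruling out pathological multi-scale collapse and showing that the Toda solution near $\varrho=0$ has the model behavior. I would first try to prove a removable-singularity/B\^ocher-type theorem for the axisymmetric or Toda data, using the positivity $\Sc_{g_K}>0$ together with finite energy from Lemma \ref{lem:Gauss-Bonnet}.
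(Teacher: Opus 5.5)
Your outline has a genuine gap, and it sits exactly where you flag the ``main obstacle''. In addition, Step~1 rests on premises that are false as stated.

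The claim that $|\mathbb K|_g$ is bounded ``since $\mathbb K$ is a Killing field on a manifold with curvature decay'' is not a valid principle. On the flat $\AF_\beta$ model, the rotation about the axis is a Killing field of linear growth. On the anti-Calabi--Eguchi--Hanson instanton one has $\lambda^{-1/3}\sim r^2$, so $|\mathbb K|_g=|\nabla_g\lambda^{-1/3}|_g\sim r$. In the collapsed case, boundedness of $\mathbb K$ and $\lambda^{-1/3}\sim cr$ do hold, but as \emph{consequences} of the classification, not as inputs.

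Your candidate identity already fails on anti-Taub--NUT. There $|\mathbb K|_g^2=c\,r/(r+m)$ and $\lambda^{2/3}=c'(r+m)^{-2}$. The only radial $S^1$-invariant harmonic functions are $a+b/r$, and no nontrivial combination of the two is of that form.

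In Step~2 the variables are inverted. In \eqref{eq:V in lebrun-tod} one has $\varrho=1/\xi=\lambda^{-1/3}$, so infinity corresponds to $\varrho\to\infty$, not $\varrho\to0$.

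Most importantly, you hope to prove a B\^ocher-type removable singularity theorem for the Toda data. That is precisely the kind of statement that is not available, even in the hyperk\"ahler case (compare the discussion around \eqref{eqn:harmonic function local form}). As it stands, nothing in your argument controls the vertex of the asymptotic cone.

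The missing ideas are global.

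First, one shows that $\mathbb K$ has no zeros outside a compact set. The function $\xi=\Sc_{g_K}$ is Morse--Bott, has no local minima by \eqref{eqn:scalar curvature equation}, has a unique maximal critical set, and otherwise has only isolated index-$2$ points. If those escaped to infinity, rescaling would give a limiting Killing field vanishing along a $G_\infty$-orbit with degenerating weight ratio. This would produce an effective isometric $\mathbb R^2$-action fixing that orbit, which is impossible. The conclusion permits a global K\"ahler reduction at infinity, and a split according to whether $\mathbb K$ generates an $S^1$- or a $\mathbb T^2$-action.

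In the $S^1$ case, the level sets of $\xi$ are (orbi-)circle bundles over a compact Riemann surface $\Sigma$. Integrating \eqref{eq:toda equation} over $\Sigma$ with Gauss--Bonnet yields the exact identities \eqref{eq:e^u volume of sigma}--\eqref{eq:volume of rho annuli}. These replace any removable singularity theorem:
they force $\chi(\Sigma)>0$ and rule out Type~$\II$ local limits. Combined with a Cheng--Yau Harnack estimate for the normalized $\xi_j$ (from $6\Delta_{g_j}\xi_j+r_j^2\xi(q_j)^3\xi_j^4=0$), they show three things. Local limits are scalar-flat K\"ahler over a round $S^2$. $V_\infty$ is harmonic by LeBrun's ansatz, hence constant by non-maximal volume growth. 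So the cone is $\mathbb R^3/\mathbb Z_k$. Toda analysis then gives $\ALF$ or $\AF_\beta$ with $\beta\in\mathbb Q$; note that circle orbits do not mean $\ALF$ only.

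In the torus case, one excludes rank $\ge 3$ by explicit solutions and shows the cone is the flat half-plane, so $|\Rm|=o(r^{-2})$. One then needs the resolution of the Petrunin--Tuschmann conjecture \cite{LS2024} to identify the end as a finite quotient of $S^2\times S^1\times[1,\infty)$. Reducing on its universal cover gives $\AF_\beta$ with $\beta\notin\mathbb Q$.

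Theorem~\ref{thm:perturbation} plays no role here: it is hyperk\"ahler-specific, and $\mathbb K$ already supplies the symmetry.
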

\begin{remark}
   In \cite{Li2} a stronger classification result is proved: if we only assume $(M, g)$ is a Riemannian end which is Ricci-flat, Hermitian and with finite energy, then we may also see special Kasner or anti-$\ALH^*$ models. On the other hand, these two model ends cannot appear if $(M, g)$ is complete.
\end{remark}
  In the remainder of this subsection, we sketch a proof of the above theorem. In the non-collapsed case, $(M,g)$ must be $\ALE$ by \cite{BKN}, so we henceforth assume that $(M,g)$ is collapsed at infinity. Recall that there is a canonical Killing field $\mathbb{K}$.
  
\

\textbf{Claim.} $\mathbb K$ has no zeroes outside a compact set. 

\

To see this, we first note that the moment map $\xi := \Sc_{g_K}$ is a Morse-Bott function on $M$, whose critical submanifolds have indices $0$, $2$, or $4$. Since $\Sc_{g_K}$ is nowhere vanishing, it follows from \eqref{eqn:scalar curvature equation} and the maximum principle that $\xi$ cannot admit a local minimum. Moreover, it has exactly one maximal critical submanifold, while all other critical submanifolds consist of isolated points of index $2$.

It therefore suffices to show that the index $2$ critical points remain in a compact set. Suppose, for contradiction, that this is not the case. Then there exists a sequence of index $2$ critical points $q_j$ tending to infinity. Let $a_j > 0$ and $-b_j < 0$ denote the associated weights of $\mathbb{K}$ at $q_j$. Fix a base point $p$, and consider the rescaled spaces $(M, g_j, p) := (M, r_j^{-2} g, p)$, where $r_j = \dist_g(p, q_j)$.

For sufficiently small $\delta$, the balls $B_j := B(q_j, \delta) \subset (M, g_j)$ have bounded curvature and non-collapsed universal covers $(\widetilde B_j, \widetilde q_j)$. Passing to a subsequence, we may assume that $(B_j, q_j)$ converges to a limit ball $B_\infty := B(q, \delta)$, and that the universal covers $(\widetilde B_j, \widetilde q_j)$ converge to a limit $(\widetilde B_\infty, \widetilde q)$. The actions of the deck transformation groups $G_j$ on $\widetilde B_j$ converge to an isometric action of a Lie group $G_\infty$ of positive rank on $\widetilde B_\infty$, and
\[
B_\infty = \widetilde B_\infty / G_\infty.
\]

Since $\mathbb{K}$ vanishes at $q_j$, after suitable normalization we may assume that it converges to a nonzero Killing field $\mathbb{K}_\infty$ on $\widetilde B_\infty$ that vanishes at $\widetilde q$. It then follows that $\mathbb{K}_\infty$ must vanish along the orbit $G_\infty \cdot \widetilde q$. Consequently, we must have either $a_j / b_j \to 0$ or $a_j / b_j \to \infty$. In either case, the $\mathbb{R}$-action generated by $\mathbb{K}$ limits to an effective isometric $\mathbb{R}^2$-action on $\widetilde B_\infty$ that fixes the orbit $G_\infty \cdot \widetilde q$, which is impossible.

\

   We now divide into 2 cases 
\begin{itemize}
    \item $\mathbb{K}$ induces an $S^1$-action;
    \item $\mathbb{K}$ induces a higher rank $\mathbb T^k$-action.
\end{itemize}

\subsubsection{$\mathbb{K}$ induces an $S^1$-action}
\label{subsubsec:S1 positive case}

In this case, we may perform the K\"ahler reduction. For simplicity, we assume that the $S^1$-action is free; in general, one must work in the orbifold setting (see \cite{Li2}). Then each level set $\{\xi = \epsilon\}$ is an $S^1$-bundle over a compact Riemann surface $\Sigma$.

In \eqref{eq:lebrun-tod ricci flat metric}, the metric
$$
\underline g := e^u (dx^2 + dy^2)
$$
can be viewed globally as a family of metrics on $\Sigma$ parametrized by $\varrho$. Note that at infinity $\xi = \Sc_{g_K} \to 0$, and hence $\varrho \to \infty$.

Integrating the $SU(\infty)$ Toda equation
$$
(e^u)_{\varrho\varrho} + u_{xx} + u_{yy} = 0
$$
over $\Sigma$, and applying the Gauss-Bonnet formula, we obtain
\begin{equation}\label{eq:e^u volume of sigma}
    \int_{\Sigma} e^u \, dx\,dy = 2\pi \chi(\Sigma)\varrho^2 + a\varrho + b,
\end{equation}
for some constants $a$ and $b$. Using \eqref{eq:V in lebrun-tod}, one also finds
\begin{equation}\label{eq:Ve^u volume of sigma}
    \int_{\Sigma} V e^u \, dx\,dy = -6a\varrho^2 - 12b\varrho.
\end{equation}
Suppose that the period of the vector field $\mathbb{K}$ is $\mathfrak{p}$. Then we obtain the volume formula
\begin{equation}\label{eq:volume of rho annuli}
    \Vol(D_0 \le \varrho \le D_1)
    = -2\mathfrak{p} a (D_1^3 - D_0^3) - 6\mathfrak{p} b (D_1^2 - D_0^2).
\end{equation}
Since $\varrho > 0$, it follows from \eqref{eq:e^u volume of sigma}--\eqref{eq:volume of rho annuli} that $\chi(\Sigma) > 0$, and hence $\Sigma$ is conformally equivalent to $S^2$.

We now consider an asymptotic cone $(\mathcal C, O)$, defined as the pointed Gromov-Hausdorff limit of the rescaled spaces
\[
(M_j, g_j, p) := (M, r_j^{-2} g, p),
\]
for a sequence $r_j \to \infty$. As before, for any $q \in \mathcal C \setminus \{O\}$, a ball $B_\infty$ around $q$ is of the form
\[
B_\infty = \widetilde B_\infty / G_\infty,
\]
where $(\widetilde B_\infty, \tilde g_\infty)$ arises as the limit of the local universal covers. Our goal is to pass the conformally K\"ahler structure to the limit $\widetilde B_\infty$. This is more delicate than in the hyperk\"ahler case, since Type $\mathrm{II}$ metrics may degenerate to Type $\mathrm{I}$ metrics in the limit.

On $M_j$, we define
$$
\xi_j := (\xi(q_j))^{-1}\xi, \qquad \varrho_j := \xi_j^{-1}.
$$
By the scalar curvature formula under conformal change, we obtain
\begin{equation}\label{eq:scaled down lambda_j}
    6\Delta_{g_j}\xi_j + r_j^2 \xi(q_j)^3 \xi_j^4 = 0.
\end{equation}
Noting that $\xi(q_j) \le C r_j^{-2/3}$, the Cheng-Yau gradient estimate yields a Harnack inequality for $\xi_j$. In particular, after appropriate normalization and passing to a subsequence, there exists a smooth \emph{positive} limit function $\xi_\infty$ on $\widetilde B_\infty$ such that the conformal metric $\xi_\infty^2 \widetilde g_\infty$ is K\"ahler.

Moreover, $\xi_\infty$ descends to $B_\infty$ and extends to a globally defined function on $\mathcal C \setminus \{O\}$. From volume comparison and the formula \eqref{eq:volume of rho annuli}, it follows that $\xi_\infty$ is non-constant, with $\xi_\infty \to 0$ at $O$ and $\xi_\infty \to \infty$ at infinity.

After pulling back to each local cover $\widetilde{B}_j$, we may assume that the vector fields $\mathbb{K}_j := -J \nabla_{g_j} \xi_j^{-1}$ and functions $V_j := |\mathbb{K}_j|_{g_j}^{-2}$ converge, respectively, to
\[
\mathbb{K}_\infty := -J \nabla_{g_\infty} \xi_\infty^{-1}, 
\qquad 
V_\infty := |\mathbb{K}_\infty|_{g_\infty}^{-2}
\]
on $\widetilde B_\infty$.
We may rewrite \eqref{eq:lebrun-tod ricci flat metric} as
\begin{equation} \label{eqn5-30}g_j=V_j(d\varrho_j^2+e^{u_j}(dx^2+dy^2))+r_j^{-4}\xi^{-2}(q_j)V_j^{-1}\eta^2,
\end{equation}
where $$V_j=\xi^{-3}(q_j)r_j^{-2}(-12\varrho_j+6\varrho_j^2\partial_{\varrho_j}u_j)$$ and $$u_j=u+\log\xi^2(q_j).$$
We may also assume that $u_j$ converges to a smooth limit $u_\infty$. 
Denote  $$h_{\Sigma,j}:=e^{u_j}(dx^2+dy^2)$$
as the metric on $\Sigma$ determined by $\varrho_j=1$.
Using the above local convergence, one sees that for sufficiently large $j$, the metrics $h_{\Sigma,j}$ have uniformly bounded curvature. 
Furthermore, since $\chi(\Sigma) > 0$, the sequence $(\Sigma, h_{\Sigma,j})$ cannot collapse everywhere.
Without loss of generality, we may choose $q_j$ such that its image $q_{\Sigma,j} \in (\Sigma, h_{\Sigma,j})$ has injectivity radius bounded from below. 
It then follows that, after passing to a subsequence, the balls $B_\epsilon(q_{_{\Sigma,j}})\subset (\Sigma,h_{\Sigma,j})$ converges smoothly to a non-collapsed limit $(B_{\epsilon}(q_{\Sigma,\infty}),h_{\Sigma,\infty})$.

After passing to a subsequence, we may assume that one of the following occurs:
\begin{itemize}
    \item $\xi_j^3 r_j^2$ converges to a positive constant. Geometrically, this means that the Ricci-flat metric $(\widetilde B_\infty, \widetilde g_\infty)$ is of Type $\II$.
    \item $\xi_j^3 r_j^2$ converges to zero. Geometrically, this means that $(\widetilde B_\infty, \widetilde g_\infty)$ is of Type $\I$.
\end{itemize}
However, one can show that the first situation cannot occur, by considering the volume of the metric annuli $A_{1/2,2}$ in $(M,g_j,p)$ and using the choice of $q_j$, \eqref{eq:volume of rho annuli}, and \eqref{eqn5-30}.

In the second case,  the K\"ahler metric $(\widetilde B_\infty,\lambda^{2/3}_\infty\widetilde{g}_\infty)$ has vanishing scalar curvature, and we have $$-12\varrho_{\infty}+6\varrho_\infty^2\partial_{\varrho_\infty}u_\infty\equiv0. $$ It then follows that $h_{\Sigma,\infty}$ has constant curvature one. This implies that $h_{\Sigma_j}$ has bounded diameter and converges to the round metric on $S^2$. The asymptotic cone $\mathcal C$ is given by the metric 
$$g_\infty=V_\infty(d\varrho_\infty^2+\varrho_\infty^2g_{S^2}).$$
on $\mathbb R^3$.

A further argument based on LeBrun's ansatz for scalar-flat K\"ahler metrics~(\cite{LeBrun91}) shows that $V_\infty$ is harmonic with respect to the flat metric
\[
d\varrho_\infty^2 + \varrho_\infty^2 g_{S^2}.
\]
It then follows from \eqref{eq:e^u volume of sigma}--\eqref{eq:Ve^u volume of sigma}, together with the non-maximal volume growth assumption, that $V_\infty$ must be constant. This implies that $\mathcal C$ is itself isometric to $\mathbb{R}^3$.

In general, when the $S^1$-action is not necessarily free, a similar argument shows that $\mathcal C$ must be a quotient of the form $\mathbb{R}^3 / \mathbb{Z}_k$, where $\mathbb{Z}_k$ acts on an $\mathbb{R}^2$-factor. In particular, the asymptotic cone is unique.

From this, together with a PDE analysis of \eqref{eq:toda equation}--\eqref{eq:V in lebrun-tod}, one obtains an asymptotic expansion for $u$ and $V$. This implies that $(M, g)$ is up to scaling either $\ALF$ or $\AF_\beta$ for some $\beta \in \mathbb{Q}$.

\subsubsection{$\mathbb{K}$ induces a higher rank $\mathbb{T}^k$-action}
If $k \geq 3$, then one can explicitly write down solutions to the LeBrun-Tod ansatz and verify that such solutions cannot arise when $M$ is complete. Thus, we may assume $k = 2$. In this case, the unique maximal submanifold of $\xi = \Sc_{g_K}$ must be a fixed point. In particular, this implies that $M$ is simply connected.

Since the $T^2$-action has a fixed point, it follows that there exists a global moment map
\[
(x_1, x_2) : M \to \mathbb{R}^2.
\]
In this setting, although a global K\"ahler reduction with respect to the vector field $\mathbb K$ is not available, the local analysis still applies and allows one to study the structure of an asymptotic cone $(\mathcal C, O)$.
As before, away from $O$, the space $\mathcal C$ is locally given as a quotient of the form $(\widetilde B_\infty, \widetilde g_\infty)/G_\infty$. One can show that $\widetilde g_\infty$ must be of Type $\I$. Moreover, $\mathcal C \setminus \{O\}$ is a two-dimensional manifold with boundary, equipped with a Riemannian metric $g_\infty$, such that the conformally related metric $V_\infty^{-1} g_\infty$ is flat. 

Using arguments similar to those in \cite{LS2024}, one concludes that the flat space is a half-plane $\mathbb{H}$. Since $V_\infty$ is harmonic, it must be constant. Therefore, $\mathcal C$ itself is the flat half-plane $\mathbb{H}$, and the local limit $(\widetilde B_\infty, \widetilde g_\infty)$ is also flat.

The above discussion implies that $(M,g)$ is asymptotically flat, i.e., $|\Rm| = o(r^{-2})$. By the resolution of a conjecture of Petrunin-Tuschmann in \cite{LS2024}, it follows that the end of $M$ cannot be simply connected, and hence must be a finite quotient of $S^2 \times S^1 \times [1,\infty)$.
Let $K \subset M$ be a compact set, and consider the universal cover $\widetilde M$ of $M \setminus K$. One can then perform a global K\"ahler reduction on $\widetilde M$, which leads to the conclusion that $(M,g)$ is $\AF_\beta$ for some $\beta \notin \mathbb Q$.

\subsection{Classification Results B: Global Structure}
\label{ss:Hermitian B}

The following result provides a complete classification of Hermitian gravitational instantons in the collapsed case, and a partial classification in the non-collapsed case. 
\begin{theorem}[Global classification \cite{Li1,Li2}]\label{thm:Hermitian B}
    Let $(M,g)$ be a Hermitian  gravitational instanton.  Then, up to scaling, $(M, g)$ must be of the following
    \begin{itemize}
        \item the anti-Calabi-Eguchi-Hanson gravitational instanton, if $g$ is $\ALE$ with structure group $\Gamma\subset SU(2)$;
        \item  one of the spaces given in Theorem \ref{thm:BG}, if $g$ is $\ALF/\AF$. 
    \end{itemize}
\end{theorem}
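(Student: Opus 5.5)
By Theorem \ref{thm:Hermitian A}, $(M,g)$ is, up to scaling, $\ALE$, $\ALF$ or $\AF$. I treat these asymptotics separately. In each case the goal is to reduce to a setting where the metric has enough symmetry to become explicit.

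\emph{$\ALF/\AF$ case.} Theorem \ref{thm:BG} classifies the toric ones, so the task is to prove toricity, i.e.\ the Aksteiner--Andersson conjecture. We already have the canonical Killing field $\mathbb K=-J\nabla_g\lambda^{-1/3}$.
\begin{itemize}
\item If $\mathbb K$ generates a $\mathbb T^2$-action, we are done.
\item Otherwise $\mathbb K$ generates an $S^1$-action. Then the asymptotic analysis of the LeBrun--Tod data $(u,V)$ gives an expansion at infinity, and its leading terms carry an extra rotational symmetry of the model end ($\ALF$ or $\AF_\beta$ with $\beta\in\mathbb Q$).
\end{itemize}
In the second case I would try to produce a second holomorphic Killing field commuting with $\mathbb K$, as follows.
\begin{enumerate}
\item Take the model Killing field $Y_0$ on the end, graft it to $M$, and solve for a correction.
\item Use that Killing fields on Ricci-flat manifolds satisfy $\Delta Y=0$ as $1$-forms (Bochner), together with weighted Fredholm theory on the $\ALF/\AF$ end, to get a harmonic $1$-form $Y$ asymptotic to $Y_0$.
\item Show $Y$ is Killing by proving that $\mathcal L_Y g$ is a decaying solution of the linearized Einstein equation in divergence-free gauge, hence zero.
\item Show $Y$ preserves $J$ and commutes with $\mathbb K$. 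Since $J$ is determined by $\W^+$ via the eigenform of the simple eigenvalue, any isometry preserves $J$ up to sign, and continuity fixes the sign.
\end{enumerate}
The main obstacle is exactly this step: ruling out decaying infinitesimal Einstein deformations, i.e.\ showing $\mathcal L_Yg=0$. I would use the conformally Kähler structure to convert the question into one about holomorphy or the extremal condition for $g_K$, where integration by parts with boundary terms controlled by the $\ALF/\AF$ decay should work.

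\emph{$\ALE$ case with $\Gamma\subset SU(2)$.} Here I would work with $g_K=\lambda^{2/3}g$, which near infinity is modeled on the conformal flat metric $r^{-4}g_{\mathbb C^2/\Gamma}$. This suggests the conformal compactification: adding a point (orbifold point modeled on $\mathbb C^2/\Gamma$) at infinity. Specifically:
\begin{enumerate}
\item Show $\lambda\sim c\,r^{-6}$. Then $\lambda^{2/3}g$ extends as an orbifold Kähler metric, extremal with $\Sc_{g_K}>0$, on a compact complex orbifold $\widehat M=M\cup\{\infty\}$, making $(\widehat M,g_K)$ a compact Bach-flat Kähler orbifold.
\item The extremal field $\mathbb K$ extends and fixes the orbifold point. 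Using the positive scalar curvature and the constraint on $b_2$ from Lemma \ref{lem:Gauss-Bonnet}, combined with the adjunction-type constraint that $\Gamma\subset SU(2)$ forces a $(-2)$-curve, reduce to a toric (cohomogeneity one) situation.
\item There, solving the ODE yields Calabi--Eguchi--Hanson with reversed orientation.
\end{enumerate}
The decisive step I expect is showing the resolved orbifold surface has a single exceptional curve. I would first try a LeBrun-style weighted Gauss--Bonnet/signature argument: compute $\chi$ and $\tau$ of $M$ from $\int|\W^\pm|^2$ with the $\ALE$ boundary corrections from Lemma \ref{lem:Gauss-Bonnet}, and compare with the Hermitian identity relating $|\W^+|^2$ to $\Sc_{g_K}^2$.
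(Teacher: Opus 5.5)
Your two reductions (toricity plus Theorem~\ref{thm:BG} for $\ALF/\AF$, and conformal compactification of $g_K$ for $\ALE$) match the paper. In both cases, however, the step that carries the content is missing.

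\textbf{The $\ALF/\AF$ case.} As stated, Steps 1--3 use only that $g$ is Ricci-flat with $\ALF$ asymptotics, and in that generality the conclusion is false. A multi-Taub-NUT metric with three non-collinear centers is Ricci-flat and $\ALF$, has an $S^1$-action, and its model end carries the extra rotations, yet it has no second Killing field. For a lifted rotation $Y$, $\mathcal L_Yg$ is a nonzero, decaying infinitesimal Einstein deformation: the one obtained by rotating the centers. So ``decaying and divergence-free, hence zero'' is not available. Since $|Y|\sim r$ and $|\partial\Omega_R|\sim R^2$, the boundary term in $\int|\mathcal L_Yg|^2$ vanishes only if $|\mathcal L_Yg|=o(r^{-3})$, i.e.\ only if you already know the end is axisymmetric to high order. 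Your remedy, ``convert to holomorphy,'' presupposes exactly the missing ingredient: a second holomorphic vector field.

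The paper obtains it globally from complex geometry. Adding a point to each $\mathbb C^*$-orbit at infinity compactifies $(M,J)$ to an orbifold $\overline M$ with divisor $D\cong\CP^1$. LeBrun's observation is that the Hermitian metric on $K_M^{-1}$ induced by $g_K$, multiplied by $\Sc_{g_K}^{-2}$, is positively curved. This gives $-(K_{\overline M}+D)>0$, so $(\overline M,D)$ is log del Pezzo, hence rational. A minimal-model argument with the lifted $\mathbb C^*$-action, ending at $\CP^2$ or a Hirzebruch surface, then yields a holomorphic $(\mathbb C^*)^2$-action extending that of $\mathbb K$. Only after this does a Calabi-type theorem for the Poincar\'e-type extremal metric $g_K$ give an isometric $\mathbb T^2\subset(\mathbb C^*)^2$.

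\textbf{The $\ALE$ case.} Since $\Sc_{g_K}=\lambda^{1/3}\to0$ at infinity, the extended orbifold metric has $\Sc_{g_K}>0$ on $M$ but $\Sc_{g_K}=0$ at the added orbifold point. This vanishing is not a nuisance; it is the decisive constraint, and your outline never uses it (indeed it asserts positivity on all of $\widehat M$). Conversely, nothing in your argument yields a toric structure, let alone a cohomogeneity-one one. The latter needs a $U(2)$-type isometry group that you never produce, and toricity of $\overline M$ cannot be deduced a priori here. Nor does the $\chi/\tau$ computation count exceptional curves: it only expresses $\int|\W^+|^2\,d\mu_g$ through $\int\Sc_{g_K}^2\,d\mu_{g_K}$.

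The paper's argument runs as follows. $\overline M$ is a log del Pezzo orbifold with a single orbifold point fixed by the $\mathbb C^*$-action of $\mathbb K$, and for $\Gamma\subset SU(2)$ there are only finitely many such surfaces. For each one, it computes the minimum of the extremal scalar curvature from the K\"ahler class and the extremal vector field (as in LeBrun--Simanca). That minimum is zero only for $\mathbb P(\mathcal O(2)\oplus\mathcal O)$ with its $(-2)$-curve contracted and the fiberwise $\mathbb C^*$-action, which is the anti-Calabi--Eguchi--Hanson space.
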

\begin{remark}
In the $\ALF$ and $\AF$ cases, the above confirms a conjecture of \cite{AA}. There is also related work in \cite{BernardoLucietti}, following the approach of \cite{BG}, which shows that in the $\ALE$ case, under a toric assumption, the only possibility is the anti-Calabi-Eguchi-Hanson gravitational instanton.
\end{remark}

In the remainder of this subsection, we sketch a proof of the above theorem. We begin with the $\ALF$ and $\AF$ cases. The strategy is to show that such a gravitational instanton must be toric, and then to appeal to Theorem~\ref{thm:BG}.

Let $(g_K, J)$ denote the conformally K\"ahler metric. If the Killing field $\mathbb{K}$ generates a $\mathbb{T}^2$-action, then the metric is already toric. Thus, we may assume that $\mathbb{K}$ generates only an $S^1$-action. In this case, $(M,g)$ is either $\ALF$ or $\AF_\beta$ with $\beta \in \mathbb{Q}$. By the asymptotic classification (Theorem~\ref{thm:Hermitian A}), the K\"ahler metric $g_K$ exhibits cusp behavior at infinity, in the sense that locally it is asymptotic to a Poincar\'e cusp bundle over a round $\mathbb{S}^2$.

There are two main steps:
\begin{enumerate}
    \item Show that $(M, J)$ admits an effective holomorphic $(\mathbb{C}^*)^2$-action extending the $S^1$-action generated by $\mathbb K$;
    \item Prove a Calabi-type theorem, producing a compact torus $\mathbb T^2 \subset (\mathbb{C}^*)^2$ that preserves $g_K$.
\end{enumerate}

Step (1) follows from a compactification result. Denote by $h$ the Hermitian metric on $K_M^{-1}$ induced by $g_K$. An observation of LeBrun~\cite{LeBrunEinsteinMetricsComplexSurfaces} implies that the Hermitian metric $\Sc_{g_K}^{-2} h$ on $K_M^{-1}$ has positive curvature. 

One can naturally compactify $(M, J)$ to a two-dimensional complex orbifold $\overline{M}$ by adding a point to each $\mathbb{C}^*$-orbit at infinity. The complement $\overline M \setminus M$ is a divisor $D = \mathbb{CP}^1$. The $\mathbb{C}^*$-action induced by $\mathbb K$ extends to $\overline{M}$ and fixes $D$.

By studying the positive curvature $2$-form of the Hermitian metric $\Sc_{g_K}^{-2} h$, one shows that the line bundle $-(K_{\overline M} + D)$ is positive on $\overline{M}$. Hence $(\overline{M}, D)$ is a log del Pezzo surface. By performing suitable blow-ups, one can further show that $\overline{M}$ is birational to a ruled surface over $\mathbb{CP}^1$, and hence is rational. Taking a minimal resolution of $\overline{M}$ with the lifted holomorphic $\mathbb{C}^*$-action, and then passing to a minimal model $M'$, we obtain a surface that still admits a holomorphic $\mathbb{C}^*$-action. It follows that $M'$ must be either $\mathbb{CP}^2$ or a Hirzebruch surface. Under these strong constraints, one concludes that $\overline{M}$ admits a holomorphic toric $(\mathbb{C}^*)^2$-action containing the $S^1$-action generated by $\mathbb K$.

For Step (2), recall that for a compact extremal K\"ahler metric, Calabi~\cite{Calabi1970RicciII} proved that the isometry group contains a maximal compact subgroup of the identity component of the holomorphic automorphism group. In our setting, one can extend this result using the fact that $g_K$ has Poincar\'e-type behavior at infinity.

\

Finally, we consider the $\ALE$ case. In contrast to the $\ALF$ and $\AF$ cases, the K\"ahler metric $g_K$ is incomplete on $M$, and its completion $\overline{M}$ is obtained by adding an orbifold point at infinity with orbifold group $\Gamma \subset U(2)$. 
The compactified surface $\overline{M}$ is still a log del Pezzo surface, but in this case one cannot conclude that $\overline{M}$ is toric. The metric $g_K$ extends to an orbifold K\"ahler metric $\overline g_K$ on $\overline{M}$, which is Bach-flat. Its scalar curvature $s_{\overline g_K}$ vanishes at the orbifold point and is positive elsewhere.

In general, for $\Gamma \subset U(2)$, there exist infinitely many log del Pezzo surfaces with a single orbifold point admitting a holomorphic $\mathbb{C}^*$-action fixing that point. However, under the assumption $\Gamma \subset SU(2)$, there are only finitely many such orbifolds. One can compute the minimum value of $\Sc_{g_K}$ in terms of the K\"ahler class and the holomorphic extremal vector field, as essentially carried out by LeBrun-Simanca \cite{LeBrunSimanca}. 

In our setting, one can explicitly verify that the minimum of $\Sc_{g_K}$ is nonzero, except in the case of the orbifold obtained from $\mathbb{P}(\mathcal{O}(2)\oplus \mathcal{O})$ by contracting the $(-2)$-curve, with the $\mathbb{C}^*$-action given by rotation along the fibers. This exceptional case corresponds to the anti-Calabi-Eguchi-Hanson gravitational instanton.

\section{Type \texorpdfstring{$\III$}{III} gravitational instantons}\label{s:Type III}
The study of Type $\III$ gravitational instantons is largely an open area. On the one hand, there are not many known topological or geometric constraints; on the other hand, it is also challenging to solve the PDE system for general Einstein metrics. To date there are only two classes of examples of Type $\III$ (for both orientations) gravitational instantons are known, by \cite{KRWY} and  \cite{LiSun2025}.  In this section we will explain these results.

\subsection{Dimension reduction}
\label{subsec:reduction}

Let $(M,g)$ be an oriented  4-dimensional Riemannian manifold. Suppose $X$ is a Killing vector field on $M$ and denote by $\eta=X^{\flat}$ its dual 1-form. Then one can check that 
$$d^*(\eta\wedge d\eta)=2(\Ric_g.\eta) \wedge \eta. $$
In particular, if $g$ is Ricci-flat, then locally one can find an \emph{twist potential}
 $\mathcal E$, which is unique up to a constant addition,  such that  $$d\mathcal{E}=*(\eta\wedge d\eta).$$ 
The \emph{twist 1-form} $d\mathcal{E}$ measures the non-integrability of the orthogonal distribution to $X$. 
Clearly 
$$\mathcal L_{X}\mathcal E=\langle \eta, d\mathcal E\rangle=*(\eta\wedge *d\mathcal E)=0.$$
The twist was first defined in the context of general relativity for a stationary vacuum spacetime, i.e, a Lorentzian Einstein metric with a time-like Killing field. 

Now suppose we have two commuting Killing fields $X_1, X_2$, with dual 1-forms $\eta_1, \eta_2$ respectively.  Then we get two twist potentials $\mathcal{E}_1, \mathcal{E}_2$ respectively.  
Denote $$\Theta_i:=*(\eta_1\wedge\eta_2\wedge d\eta_i),\ i=1, 2.$$ Then 
$$d\Theta_1=d\langle\eta_2, d\mathcal{E}_1\rangle=d\mathcal L_{X_2}\mathcal{E}_1=\mathcal L_{X_2} \eta_1=0,$$ 
and similarly, $d\Theta_2=0$. So $\Theta_1$ and $\Theta_2$ are both constants. They are zero precisely when the distribution orthogonal to the span of $X_1, X_2$ is integrable. This is the case if $X_1$ and $X_2$ have a common zero. 

The above discussion motivates one to work with Riemannian metrics $g$ on $B^2\times \mathbb T^2$ which are $\mathbb T^2$-invariant. Assuming the orthogonal distribution is integrable, they are locally  of the form 
\begin{equation}
    g=\underline g+\rho \sum_{1\leq\alpha, \beta\leq 2}\Phi_{\alpha\beta}d\phi_\alpha d\phi_\beta
\end{equation}
where $\underline g$ is a metric on $B$, $\rho>0$ is a function on $B$, and $\Phi$ is a smooth map from $B$ into $\mathrm{SL}(2;\R)/\SO(2)$; the latter is identified with the space of positive definite symmetric 2 by 2 matrices with determinant 1. We also normalize the area form such that 
$$\int_{\mathbb T^2} d\phi_1d\phi_2=1.$$
To derive the dimension reduction of the Ricci-flat equation in this setting, we recall that a Ricci-flat metric is a critical point of the Einstein-Hilbert functional 
$$\EH(g)=\int_{M} \Sc_g \dvol_g.$$
A straightforward computation shows that 
\begin{equation}
    \Sc_g=\Sc_{\underline g}-2\rho^{-1}\Delta_{\underline g}\rho+\frac{1}{2}\rho^{-2}|d\rho|^2_{\underline g}-\frac{1}{4}\Tr(\Phi^{-1}d\Phi )_{\underline g}^2. 
\end{equation}
So we may write 
\begin{equation}
    \EH(g)=\int_B (\Sc_{\underline g}\rho-2\Delta_{\underline g}\rho+\frac{1}{2}\rho^{-1}|d\rho|^2_{\underline g}-\frac{1}{4}\rho\Tr(\Phi^{-1}d\Phi )_{\underline g}^2)\dvol_{\underline g}.
\end{equation}
To derive the Euler-Lagrange equation of $\EH$, we work at a critical point of $\EH$ and  consider compactly supported variations of $\rho, \Phi$ and $\underline{g}$ separately. 
First, if we conformally vary $\underline g$ to $(1+u)\underline g$, then noticing that the last three integrals are conformally invariant, so 
$$\delta\EH=\int_B -\frac{1}{2}\rho \Delta_{\underline g}u \dvol_{\underline g}=-\frac{1}{2}\int_B u\Delta_{\underline g} \rho \dvol_{\underline g} .  $$
This implies that $\rho$ is a harmonic function on $B$. Let $z$ be its harmonic dual, i.e., $dz=*d\rho$. Generically we may assume $\rho+\sqrt{-1}z$ gives local \emph{conformal} coordinates for $\underline g$. 

If we consider the variation of $\Phi$, then we get 
	\begin{equation}\label{eqn:axisymmetric}
	    (\rho\Phi^{-1}\Phi_\rho)_\rho+(\rho\Phi^{-1}\Phi_z)_z=0.
	\end{equation}	
This is the same as saying that $\Phi$ defines an axisymmetric harmonic map in 3 dimensions, if we use cylindrical coordinates $(\rho, z, \theta)$. 
    
If we vary the conformal class of $\underline g$, i.e., vary $\underline g$ by $h$ with $\tr_{\underline g}h=0$, then we have
\begin{equation}\label{eqn6-4}\nabla^2\rho+\frac{1}{2}\rho^{-1}d\rho\otimes d\rho -\frac{\rho}{4}\Tr(d\Phi\Phi^{-1})^2=c \underline g
\end{equation}
for some constant $c$.
Finally, if we vary $\rho$, we get 
\begin{equation}\label{eqn6-5}
    \Sc_{\underline g}=\Delta_{\underline g}\log\rho+\frac{1}{2}\rho^{-2}|d\rho|^2_{\underline g}+\frac{1}{4}\Tr(\Phi^{-1}d\Phi)_{\underline g}^2.
\end{equation}
One can see that \eqref{eqn6-4} and \eqref{eqn6-5} combine to one equation 
\begin{equation}
    \Ric_{\underline g}=\nabla^2\log\rho+\frac{1}{4}\Tr(\Phi^{-1}d\Phi+\rho^{-1}d\rho \Id)^2_{\underline{g}}.
\end{equation}
If we write the metric 
$$\underline g=e^{2\nu}(d\rho^2+dz^2),$$
then this is equivalent to the system 
\begin{align} \label{e:eqn2.33}
	\left\{\begin{aligned}
		&\p_z\nu=\frac{1}{4\rho}\Tr(\mathcal U\mathcal V),\\
		&\p_\rho\nu=-\frac{1}{2\rho}+\frac{1}{8\rho}\Tr(\mathcal U^2-\mathcal V^2),
	\end{aligned}\right.
\end{align}
where we denote  
$$\mathcal U=\Id+\rho\Phi^{-1}\Phi_\rho,$$ 
and 
$$\mathcal V=\rho\Phi^{-1}\Phi_z.$$ 
The compatibility between the two equations in \eqref{e:eqn2.33} follows from \eqref{eqn:axisymmetric}. It also implies that $\nu$ is determined up to constant by $\Phi$.

The upshot is that in the above setting, a Ricci-flat metric locally reduces to an \emph{augmented harmonic map}, which consists of an axisymmetric harmonic map $\Phi$ together with a choice of the \emph{augmentation} $\nu$. Such a formalism is well-studied in the general relativity literature, see \cite{Kunduri, Lott, Harmark}.

\subsection{Rod structures}
We are interested in \emph{toric} gravitational instantons, namely, gravitational instantons with an effective action of a torus $\mathbb T^2$. The topology is encoded in a \emph{rod structure}, which plays the role of a Delzant polytope in toric K\"ahler geometry. For a rigorous and precise definition of a rod structure and related terminologies we refer to \cite[Section 3]{LiSun2025}. For the expository purpose in this paper,  we give a less technical and slightly different definition from \cite{LiSun2025}.

\begin{definition}A rod structure $\mathfrak R$ consists of the following data
\begin{itemize}
  \item   A collection of \emph{turning points} in $\R$ \[ -\infty< z_1 = 0 < \dots < z_n <+\infty.\]
  \item A lattice $\Lambda\subset\R^2$.
    \item A  primitive \emph{rod vector}  $\mathfrak v_j\in \Lambda$ for each rod $$\mathfrak I_j:=(z_j, z_{j+1}),  \quad j=0, \cdots, n,$$ such that $\Lambda$ is generated by $\mathfrak v_j$ and $\mathfrak v_{j+1}$. Here we denote $z_0=-\infty$ and $z_{n+1}=+\infty$. 
    If $\mv_0\neq\mv_n$, then we require that $\Lambda$ is generated by $\mv_0$ and $\mv_n$ and
$|\mv_0|=|\mv_n|;
  $ 
     if $\mv_0=\mv_n$, then we require that $\Lambda$ is generated by $\mv_0$ and $\mv_0'$, where $\mv_0'$ is obtained by rotating $\mv_0$ counterclockwise by $\pi/2$.
    \item An asymptotic \emph{Type} $\sharp$, which is either $\Ae, \ALF^+, \ALF^-$ if $\mv_0\neq \pm\mv_n$, and $\AFa$ for some $\beta\in \R$ if $\mv_0=\pm\mv_n$
    \end{itemize}
    \end{definition}

\begin{remark}
    Here $\Ae$ means $\ALE$ with the finite group $\Gamma=\{\Id\}$. 
\end{remark}
    
    Given a rod structure $\mathfrak R$, we can build an oriented smooth toric four manifold (without specifying a Riemannian metric) as follows. In the $(\rho, z)$ plane we denote
    $$\mathbb H:=\{\rho\geq 0\}. $$
    Denote by $\mathbb H^\circ$ the interior of $\mathbb H$ and we identify $\p\mathbb H$ with $\R$ so the turning points and the rods are naturally subsets of $\p\mathbb H$.
Consider the product 
$$M^\circ:=\mathbb H^\circ\times \mathbb T^2,$$
where $\mathbb T^2:=\mathbb R^2/\Lambda$. We endow $M^\circ$ with the canonical orientation induced by the 4-form $d\rho\wedge dz\wedge d\phi_1\wedge d\phi_2$, where $\phi_1, \phi_2$ are the natural coordinates on $\R^2$. It admits a natural translation action of the 2-torus $\mathbb T^2$. The open  manifold $M^\circ$ can be partially compactified into a smooth toric manifold $M$, by adding points with stabilizers over $\p\mathbb H$: over each rod $\mathfrak I_j$ we add a copy of $\dS^1$, which is the quotient of $\mathbb T^2$ by the subgroup generated by the rod vector $\mathfrak v_j$, and at each turning point $z_j$ we  add a point whose stabilizer is the whole $\mathbb T^2$.

Changing the orientation of this $4$-manifold corresponds to replacing $\mathfrak R$ by its \emph{opposite} $\overline{\mathfrak R}$. The opposite rod structure has the same turning points as $\mathfrak R$, but the rod vectors $\mathfrak v_j$ are replaced by $R_1\mathfrak v_j$, and the asymptotic Type $\sharp$ is replaced by $\overline{\sharp}$. Here $R_1$ denotes the reflection in $\mathbb R^2$ that swaps the two coordinate axes, and the Type transforms according to
\[
\overline{\Ae}=\Ae, \qquad 
\overline{\ALF^{\pm}}=\ALF^{\mp}, \qquad 
\overline{\AFa}=\AF_{-\beta}.
\]

Given a rod structure $\mathfrak R$, we may associate to it a nonnegative integer \emph{degree} $d(\mathfrak R)\ge 0$. Starting from the line $[\mathfrak v_0]$, for each $j=1,\ldots,n$ we rotate the line $[\mathfrak v_{j-1}]$ counterclockwise by an angle less than $\pi$ to obtain the line $[\mathfrak v_j]$. If $\mathfrak v_n=\mathfrak v_0$, this procedure produces a loop in $\R\mathbb P^1$. If $\mathfrak v_n\neq \mathfrak v_0$, we perform one additional counterclockwise rotation from $[\mathfrak v_n]$ to $[\mathfrak v_0]$ to close the loop. We then define $d(\mathfrak R)$ to be the winding number of this loop, with a shift by $-1$ when $\mathfrak R$ is of Type $\Ae$ in the $\ALF^+$ case. This shift is not essential and is introduced only to maintain uniform notation. In general, we have $d(\overline{\mathfrak R})\neq d(\mathfrak R)$. One can see that for a  rod structure $\mR$ with asymptotic Type $\Ae/\ALF/\AFa$, if $\mR$ has more than 2 turning points, then $d(\mR)>1$.

\subsection{Model examples}\label{ss:Type III model examples}
Examples of toric gravitational instantons can be obtained by applying the Gibbons-Hawking ansatz  to an axisymmetric harmonic function on $\R^3$. The associated axisymmetric harmonic maps also serve as local and asymptotic models for solving general axisymmetric harmonic map equation; see Section \ref{subsec:tame harmonic maps}. 

Denote the flat metric on $\R^3$ by $d\rho^2+dz^2+\rho^2d\phi_2^2$. For an axisymmetric harmonic function $V$,  we can locally write the Gibbons-Hawking ansatz as 
\begin{equation}\label{eq:Type I metric}
    g=V(d\rho^2+dz^2+\rho^2d\phi_2^2)+V^{-1}(d\phi_1+Fd\phi_2)^2
\end{equation}
with $$dF=\rho V_\rho dz-\rho V_zd\rho.$$ Then the associated axisymmetric harmonic map $\Phi$ is given by 
\begin{equation}\label{eqn:GH harmonic map}
    \Phi=\rho^{-1}\left(\begin{matrix}
        V^{-1} & V^{-1}F \\ V^{-1}F & \rho^2 V+V^{-1}F^2
    \end{matrix}\right).
\end{equation}
The reversed orientation can be obtained by swapping the $\phi_1$ and $\phi_2$ coordinates, which means that the associated axisymmetric harmonic map is given by 
\begin{equation}\label{eqn:GH harmonic map with reversed orientation}
    \Phi=\rho^{-1}\left(\begin{matrix}
      \rho^2 V+V^{-1}F^2 & V^{-1}F \\ V^{-1}F &   V^{-1}
    \end{matrix}\right).
\end{equation}

As more specific examples,  the flat $\R^4$ can be obtained by applying \eqref{eqn:GH harmonic map} to 
$$V=\frac{1}{2r}, \ \  F=\frac{z}{2r}. $$ 
and the rod structure has exactly one turning point $z_0=0$, and with rod vectors  $$\mv_0=(\frac{1}{2}, 1)^{\top}, \ \ \mv_1=(-\frac{1}{2}, 1)^{\top}.$$
For the Taub-NUT metric we have the same rod structure but with $$V=1+\frac{1}{2r}, \ \  F=\frac{z}{2r}. $$

If we now let $V=1$ and $F=0$ we get the flat product $\R^3\times \dS^1$, and the associated  harmonic map is 
\begin{equation}\label{e:standard harmonic map AF}\Phi^{\AF_0}=\begin{pmatrix}
\rho^{-1} & 0 \\
0 & \rho
\end{pmatrix}.
\end{equation}
The rod structure has no turning points and the rod vector is $\mv_0=(0, 1)^{\top}$. For $\beta\in \mathbb R$ we may deform $\Phi^{\AF_0}$ to 
\begin{equation}
	\Phi^{\AFa}=\frac{1}{\rho}\left(\begin{matrix}\rho^2&\beta\rho^2\\\beta\rho^2&\beta^2\rho^2+1\end{matrix}\right).
\end{equation}
This gives rise to the  quotient $M_\beta:=(\R^3\times \R)/\langle q_\beta\rangle$,  where $q_\beta$ acts as a rotation by angle $2\pi\beta$ on $\R^3$ and a translation by distance $1$ on $\R^1$.

\subsection{Tame harmonic maps}
\label{subsec:tame harmonic maps}

In order to construct toric gravitational instantons using axisymmetric harmonic maps, we need to prescribe the asymptotics of harmonic maps near the rods and at infinity. This leads to the notion of \emph{tameness} for axisymmetric harmonic maps defined on $\mathbb H^\circ$. We denote 
$$r=\sqrt{\rho^2+z^2}.$$
For a non-zero $v\in \mathbb R^2$, there is a model harmonic map $\Phi_v$, which is given by 
$$\Phi_{v}:=P_v^{\top}\left(\begin{matrix}
			     \rho&0\\0&\rho^{-1}
		    \end{matrix}\right) P_v, $$
            where $P_v$ is an element in $SO(2)$ which rotates $v$ to the direction of $[(1, 0)^{\top}]$. For $\beta\in \R$ we also define  \begin{equation}
\Phi^{\AFa}_{v}=\rho^{-1}P_v^{\top}\left(\begin{matrix}\rho^2&\beta\rho^2\\\beta\rho^2&\beta^2\rho^2+1\end{matrix}\right)P_v.
\end{equation}
            When $[v]=[(1, 0)^{\top}]$, this is the harmonic map associated to the  flat space $M_\beta$.

            Similarly, for two linearly independent vectors $v_1, v_2\in \R^2$, there are 3  model axisymmetric harmonic maps $\Phi^{\Ae}_{v_1, v_2}, \Phi^{\ALFp}_{v_1, v_2}, \Phi^{\ALFm}_{v_1, v_2}$. When $[v_1]=[(\frac{\alpha}{2}, 1)^{\top}]$ and $[v_2]=[(-\frac{\alpha}{2}, 1)^{\top}]$ for $\alpha>0$, we have 
            	 \begin{equation}\label{e:turning point model}
	\Phi^{\Ae}_{v_1, v_2}:=\frac{1}{\rho}\left(\begin{matrix}
		\frac{2r}{\alpha} & z\\
	z & \frac{\alpha r}{2}
	\end{matrix}\right),
\end{equation}
and $\Phi^{\ALFp}$ and $\Phi^{\ALFm}$ are given by the expression in \eqref{eqn:GH harmonic map} and \eqref{eqn:GH harmonic map with reversed orientation} respectively, with $V=1+\frac{\alpha}{2r}$ and $F=\frac{\alpha z}{2r}$.

When $\alpha=1$, these are the harmonic maps associated to the flat $\R^4$, the Taub-NUT metric and the anti-Taub-NUT metric respectively.  For more general $v_1, v_2$, there is a similar definition by reducing to the above case using a rotation in $SO(2)$.

\begin{definition}[Tameness]
    Given a rod structure $\mR$ with asymptotic Type $\sharp$, a smooth map $\Phi: \mathbb H^\circ\rightarrow SL(2;\R)/SO(2)$ is said to be  (globally) \emph{tamed} by $\mR$ if 
\begin{itemize}
	\item For any rod $\mI_j(j=0, \cdots, n)$ and $z\in \mJ_j$, there is a neighborhood $V$ of $z$ in $\mathbb H$ such that $$\sup_{V\cap \mathbb H ^\circ}d(\Phi,\Phi_{\mv_j})<\infty.$$
	\item For each turning point $z_j(j=1, \cdots, n)$,  there is a neighborhood $V$ of $z_j$ in $\mathbb H$ such that   $$\sup_{V\cap \mathbb H^\circ}d(\Phi,T_{z_j}^*\Phi_{\mv_{j}, \mv_{j+1}})<\infty,$$
    here $T_{\underline{z}}$ refers to the translation $$T_{\underline{z}}:\bH\to\bH ;(\rho,z)\mapsto (\rho,z-\underline{z}).$$
    \item There is a compact set $K\subset \mathbb H$, such that outside $K$ $$d(\Phi,\Phi^{\sharp}_{\mv_0})\leq C.$$
\end{itemize}
We say $\Phi$ is \emph{strongly tamed} by $\mR$ if furthermore, 
$$d(\Phi,\Phi^{\sharp}_{\mv_0})=O(r^{-1}),  \ \ r\rightarrow\infty.$$
\end{definition}

We have a general existence result, which is essentially due to  \cite{Weinstein}, \cite{Kunduri}. The key reason is due to the fact that the source space $\R^3$ is flat and the target space $SL(2;\R)/SO(2)$ is non-positively curved.  
\begin{theorem}[\cite{LiSun2025}, Theorem 4.24]\label{t:existence result}	
Given a rod structure $\mathfrak{R}$,  there is a unique axisymmetric harmonic map $\Phi: \dH^\circ\rightarrow SL(2;\R)/SO(2)$ which is strongly tamed by $\mR$. 
\end{theorem}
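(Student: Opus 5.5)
We will prove existence by the classical method for harmonic maps into nonpositively curved targets with prescribed singular behaviour along the axis (Weinstein; Khuri--Weinstein--Yamada). Uniqueness will come from the convexity of the distance function.

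\emph{Step 1: model map.} We first build a smooth reference map $\Phi_0:\mathbb H^\circ\to SL(2;\R)/SO(2)$ strongly tamed by $\mR$. Near each rod $\mI_j$ we take $\Phi_0=\Phi_{\mv_j}$. Near each turning point $z_j$ we take $\Phi_0=T_{z_j}^*\Phi_{\mv_j,\mv_{j+1}}$. Outside a large half-ball we take $\Phi_0=\Phi^\sharp_{\mv_0}$. These pieces are patched with cutoff functions in $(\rho,z)$, using geodesic interpolation in the target. The compatibility conditions in the definition of a rod structure make the pieces agree up to bounded distance on the overlaps. Indeed, the turning-point model has exactly the rod asymptotics $\Phi_{\mv_j}$ for $z<z_j$ and $\Phi_{\mv_{j+1}}$ for $z>z_j$, and the model at infinity matches $\mv_0,\mv_n$. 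It follows that the tension field $\tau(\Phi_0)$, computed for the axisymmetric Laplacian on $\R^3$, is supported in compact transition regions and is bounded there. It also decays at infinity, since the models are exactly harmonic in their regions. We should check that it is in a weighted space, $|\tau(\Phi_0)|\le C(1+r)^{-3}$ say. This holds as long as the overlap regions at infinity are chosen so that the differences between models decay like $O(r^{-1})$.

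\emph{Step 2: existence by exhaustion.} View $\Phi$ as an $SO(2)$-invariant map on $\R^3\setminus\{\text{axis}\}$. On balls $B_R$ we solve the Dirichlet problem $\tau(\Phi_R)=0$ with $\Phi_R=\Phi_0$ on $\p B_R$, using Hamilton's theorem or the heat flow (the target is nonpositively curved). Let $\sigma=d(\Phi_R,\Phi_0)$. Since the target has nonpositive curvature, $\Delta\sigma\ge -|\tau(\Phi_0)|$ weakly, away from the axis. The axis has codimension two in $\R^3$ and $\sigma$ is bounded near it by construction, so it is removable for subharmonicity. Comparing with the Newtonian potential $w$ of $|\tau(\Phi_0)|$, which is bounded and satisfies $w=O(r^{-1})$, gives $\sigma\le w$ uniformly in $R$. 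Local interior estimates for harmonic maps then give subsequential convergence as $R\to\infty$ to a harmonic $\Phi$ with $d(\Phi,\Phi_0)\le w=O(r^{-1})$. Hence $\Phi$ is strongly tamed.

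\emph{Step 3: uniqueness.} If $\Phi_1,\Phi_2$ are both strongly tamed, then $d(\Phi_1,\Phi_2)$ is bounded, subharmonic on $\R^3$ minus the axis (hence on all of $\R^3$, by removability), and tends to $0$ at infinity. The maximum principle then forces $\Phi_1=\Phi_2$.

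\emph{Main obstacle.} The delicate points are in Step 1 at infinity, especially in the $\AF_\beta$ and $\ALF^\pm$ cases. There one must verify that the reference map is harmonic up to an integrable error, so that the potential $w$ is finite and $O(r^{-1})$. Near the turning points the models differ from the rod models only by a bounded amount, so the local patching is easier. A second, smaller point is justifying removability of the axis for the subharmonic function $\sigma$. For this I would use a standard log cutoff argument in codimension two, which relies on $\sigma$ being bounded.
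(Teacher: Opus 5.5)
Your overall strategy is the Weinstein / Khuri--Weinstein--Yamada argument the paper cites: a reference map, an exhaustion with a Newtonian-potential barrier for $d(\cdot,\Phi_0)$, and uniqueness from subharmonicity of $d(\Phi_1,\Phi_2)$. Step 3 is correct as written. The gap is in Step 2, in how the approximating problems handle the axis.

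The Dirichlet problem on $B_R$ with data $\Phi_0|_{\p B_R}$ is not covered by Hamilton's theorem or the heat flow. The data leaves every compact subset of $SL(2;\R)/SO(2)$ at the two poles $\p B_R\cap\{\rho=0\}$. More seriously, any solution those methods produce (say after regularizing the data) is a genuine harmonic map on the open ball. It is therefore smooth and locally bounded across the piece of the axis inside $B_R$, while $\Phi_0$ runs off to the ideal boundary there like $\log\rho$. So $\sigma=d(\Phi_R,\Phi_0)$ blows up along the axis. It is not bounded ``by construction,'' and the codimension-two removability argument has nothing to act on. A solution on $B_R$ that actually carries the rod singularities is essentially the theorem itself on a bounded domain.

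The standard remedy is to excise the axis rather than cross it. Solve the Dirichlet problem on $\Omega_{R,\epsilon}=\{r<R,\ \rho>\epsilon\}$ (corners smoothed), prescribing $\Phi_0$ on the \emph{whole} boundary, including the tube $\{\rho=\epsilon\}$, where $\Phi_0$ is smooth. Hamilton's theorem applies there, and axisymmetry follows from uniqueness. Then $\sigma_{R,\epsilon}=0$ on $\p\Omega_{R,\epsilon}$ and $\Delta\sigma_{R,\epsilon}\ge-|\tau(\Phi_0)|$. The plain maximum principle gives $\sigma_{R,\epsilon}\le w$ uniformly in $R$ and $\epsilon$, and interior estimates on compact subsets of $\R^3\setminus\{\rho=0\}$ let you pass to the limit. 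Removability is then needed only in Step 3, where both maps are already known to be at bounded distance.

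Two smaller points.
\begin{itemize}
\item The difficulty you flag at infinity is absent from your own construction. Since $\Phi_0=\Phi^\sharp_{\mv_0}$ exactly outside a compact set, $\tau(\Phi_0)$ is compactly supported and $w=O(r^{-1})$ is automatic.
\item What actually needs checking in Step 1 is the patching along rods near the axis. There $|d\Phi|\sim\rho^{-1}$, so bounded distance between two models does not by itself bound the tension of an interpolation. What helps is that adjacent models share the same leading behaviour $\Phi(\mv_j,\mv_j)\asymp\rho$, and in coordinates adapted to the rod they differ by terms smooth in $(\rho^2,z)$. This keeps $|\tau(\Phi_0)|$ bounded; in any case, a bound $O(\rho^{-a})$ with $a<2$ would still give a bounded potential $w$.
\end{itemize}
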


\subsection{From tame harmonic maps to conical gravitational instantons}

Given a harmonic map $\Phi$ that is strongly tamed by a rod structure $\mR$ of asymptotic Type $\sharp$, we may, by the discussion in Section~\ref{subsec:reduction}, choose an \emph{augmentation} $\nu : \mathbb H^\circ \to \mathbb R$, which is unique up to an additive constant. As explained in Section~\ref{subsec:reduction}, there is a smooth oriented toric $4$-manifold $M$ associated with $\mR$. The augmented harmonic map $(\Phi,\nu)$ then determines a toric Ricci-flat metric $g$ on the open subset $M^\circ$.

\begin{theorem}[\cite{LiSun2025}, Section 4]\label{thm:from tame harmonic map to toric GI}
There exists a unique choice of augmentation $\nu$ such that the resulting metric $g$ is a toric gravitational instanton of $\sharp$ asymptotics,  possibly with cone singularities along the union of $2$-spheres corresponding to the finite rods of $\mR$.
\end{theorem}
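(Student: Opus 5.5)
We build the metric from the augmented harmonic map $(\Phi,\nu)$ and determine the additive constant in $\nu$ by the requirement at infinity. On $M^\circ=\mathbb H^\circ\times\mathbb T^2$ we take
\[
g=e^{2\nu}(d\rho^2+dz^2)+\rho\sum_{\alpha,\beta}\Phi_{\alpha\beta}\,d\phi_\alpha d\phi_\beta .
\]
By Section~\ref{subsec:reduction}, this is Ricci-flat whenever $\Phi$ solves \eqref{eqn:axisymmetric} and $\nu$ solves \eqref{e:eqn2.33}. Compatibility of \eqref{e:eqn2.33} holds by harmonicity, and $\mathbb H^\circ$ is simply connected, so $\nu$ exists and is unique up to a constant $c$. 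Replacing $\nu$ by $\nu+c$ rescales only the $(\rho,z)$ block. It therefore changes the cone angles along every rod simultaneously, since each angle is proportional to $e^{-c}$. We fix $c$ so that the metric is asymptotic to the model $g^\sharp$ at infinity.

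\textbf{Step 1: the region near infinity.} Strong tameness gives $d(\Phi,\Phi^\sharp_{\mv_0})=O(r^{-1})$. Harmonicity of both maps and elliptic estimates on dyadic annuli (rescaling to unit size) upgrade this to derivative bounds $|\nabla^k(\Phi-\Phi^\sharp_{\mv_0})|=O(r^{-1-k})$ in a suitable frame. These derivative bounds are needed uniformly up to the axis, and the axis is where they are delicate. Integrating \eqref{e:eqn2.33} along rays shows that $\nu-\nu^\sharp$ converges to a constant at infinity, with decaying error. Fixing $c$ so that this constant is $0$ gives $g-g^\sharp=O(r^{-\epsilon})$ with derivatives, so $g$ has $\sharp$ asymptotics. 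This choice is the unique one: a different constant would change the asymptotic cone angle at infinity, and the model's cone angle is smooth by definition.

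\textbf{Step 2: the infinite rods and the rods near turning points.} Near a rod $\mI_j$, boundedness of $d(\Phi,\Phi_{\mv_j})$ together with harmonicity should give a refined local expansion. It says $\Phi=P^\top\mathrm{diag}(\rho\, a,\rho^{-1}a^{-1})P+O(\rho^2)$, with $a$ smooth and positive in $z$. Here one uses the regularity theory for axisymmetric harmonic maps into a nonpositively curved target with a model singularity: the difference from the model satisfies an elliptic equation in three dimensions that is smooth across the axis. Along the rod, $\nu$ then has a limit, and the metric has a cone angle $2\pi\theta_j$ in the plane transverse to the circle collapsed by $\mv_j$. The second equation in \eqref{e:eqn2.33} makes $\theta_j$ constant along each rod. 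At a turning point, the same comparison with $T_{z_j}^*\Phi_{\mv_j,\mv_{j+1}}$ shows that the singularity is locally a product of cones, that is, an orbifold-type cone structure. The metric is therefore smooth, or conical, over the rods and points of $\partial\mathbb H$.

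\textbf{Step 3 and the main obstacle.} The semi-infinite rods $\mI_0$ and $\mI_n$ reach infinity. There the cone angle is read off from the asymptotic model, whose angle is the smooth $2\pi$ once $c$ is fixed by Step 1. Since $\theta_j$ is constant along each rod, these two rods are smooth. Only the finite rods can carry cone angles, and these correspond to the $2$-spheres over $\mI_1,\dots,\mI_{n-1}$. Completeness follows from the model end together with the bounded local geometry near $\partial\mathbb H$. Quadratic curvature decay follows from Step 1, with the $O(r^{-2})$ rate coming from scaling on annuli.

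The main obstacle is the boundary regularity in Steps 1 and 2. We need to show that mere boundedness of $d(\Phi,\text{model})$ (from tameness) yields smooth extension across the axis, with decay uniform up to $\rho=0$, so that $\nu$ has the claimed boundary limits. We would first try writing $\Phi=\Phi_{\text{model}}\exp(\psi)$. Then $\psi$ satisfies a semilinear equation in $\mathbb R^3$ that is regular across the axis, and a maximum principle on the bounded subharmonic quantity $d(\Phi,\Phi_{\text{model}})$ combined with Schauder theory gives the regularity.
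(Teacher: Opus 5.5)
Fixing the constant in $\nu$ by matching $\nu^\sharp$ at infinity is equivalent to the paper's choice. The paper fixes $\vartheta_0=2\pi$, then uses $|\mv_0|=|\mv_n|$ (resp.\ the choice of $\Lambda$ in the $\AFa$ case) to get $\vartheta_n=2\pi$. That normalization is exactly what your phrase ``the model's cone angle is $2\pi$ by definition'' relies on. Constancy of the angle along a rod comes from the $\p_z\nu$ equation, which is the first line of \eqref{e:eqn2.33}, not the second. On the rods the paper takes a lighter route. The axisymmetric harmonic-map estimates of Li--Tian, Nguyen and HKWX only give a Lipschitz extension of the metric across $S_j$. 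Smoothness then follows from the Einstein equation in harmonic coordinates, so no full regularity theory for $\Phi$ up to the axis is needed.

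The genuine gap is at the turning points. You assert that comparing with the turning-point model ``shows that the singularity is locally a product of cones'', but the rod mechanism does not transfer there:
\begin{itemize}
\item at $z_j$ the model is singular at a point rather than along a line;
\item the two adjacent rods have different collapsing directions and, in general, different cone angles $\vartheta_{j-1}\neq\vartheta_j$;
\item a bound on $d(\Phi,\text{model})$ by itself gives no regularity at the corner.
\end{itemize}
Nothing in your maximum-principle/Schauder sketch yields that the metric near this point is a $(\vartheta_{j-1},\vartheta_j)$-conical version of a smooth metric.

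The missing idea is to remove the cone angles locally before dealing with the point. Near $z_j$, replace $\Lambda$ by the lattice $\Lambda'$ generated by $2\pi\vartheta_{j-1}^{-1}\mv_{j-1}$ and $2\pi\vartheta_j^{-1}\mv_j$. By the rod regularity, the resulting toric Ricci-flat metric is smooth along both adjacent surfaces, so it is a smooth Ricci-flat metric on a punctured $4$-ball. Tameness at $z_j$, together with local elliptic estimates, supplies the hypotheses of the Bando--Kasue--Nakajima removable singularity theorem, so this metric extends smoothly across the point. Undoing the change of lattice is precisely the ``product of cones'' statement you want. Without this step, or an equivalent one, your conclusion that $g$ is smooth or conical over all of $\p\mathbb H$ is not established.
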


The proof of Theorem \ref{thm:from tame harmonic map to toric GI} involves three main  ingredients. We fix a choice of $\nu$. 

First, there is a local regularity theorem along the interior of each rod $\mathfrak I_j$. This relies on analytic estimates for axisymmetric harmonic maps developed in \cite{LiTian, Nguyen, HKWX}. These estimates were originally proved in the context of general relativity to handle singular behavior near rods, and in our setting they imply the following: for each $j = 0, \cdots, n$, there exists a constant $c_j$ such that the metric $g_j$ associated to the augmented harmonic map $(\Phi, \nu + c_j)$ extends as a Lipschitz metric across the $2$-sphere $S_j \subset M$ corresponding to the rod $\mathfrak I_j$.

Using the ellipticity of the Einstein equation together with harmonic coordinates, one can further show that $g_j$ is in fact smooth across $S_j$. In particular, the original metric $g$ may have cone singularities along $S_j$, with cone angle
\begin{equation}
 \vartheta_j = 2\pi e^{c_j}.
\end{equation}
Intrinsically, this can be expressed as
\begin{equation}
 \vartheta_j = 2\pi \lim_{\rho \to 0} \frac{1}{e^\nu \rho} \sqrt{\rho \, \Phi(\mv_j, \mv_j)}.
\end{equation}

Note that these cone singularities are analytically mild: they arise from an incorrect normalization of the rod vector $\mv_j$. If one rescales $\mv_j$ to $e^{-c_j}\mv_j$, then the metric becomes smooth. In particular, the curvature of $g$ remains locally bounded.

Secondly, there is a singularity removal theorem at each turning point $z_j$. To analyze this, consider the two adjacent rods $\mI_{j-1}$ and $\mI_j$. The above discussion shows that $g$ has cone angle $\vartheta_{j-1}$ along $\mI_{j-1}$ and cone angle $\vartheta_j$ along $\mI_j$. 
In a neighborhood of $z_j$, we may replace the lattice $\Lambda$ by a new lattice $\Lambda'$, generated by $2\pi \vartheta_{j-1}^{-1}\mv_{j-1}$ and $2\pi \vartheta_j^{-1}\mv_j$. With respect to $\Lambda'$, the metric $g$ becomes smooth along the corresponding $2$-spheres $S_{j-1}$ and $S_j$. One can then show that $g$ also extends smoothly across the point associated with the turning point $z_j$. This again relies on local elliptic PDE analysis and the removable singularity theorem for Einstein metrics due to Bando-Kasue-Nakajima \cite{BKN}.

Finally, there is a unique choice of $\nu$ such that $\vartheta_0 = 2\pi$. By the definition of asymptotic models and the normalization conditions on $\mv_0$ and $\mv_n$ in the definition of a rod structure, it follows that $\vartheta_n = 2\pi$ as well. Hence, $g$ has no cone singularities outside a compact set.

\medskip

There is also a rigidity result for globally tame harmonic maps. Note that there is a natural action of $SL(2;\mathbb{R})$ on the set of rod structures, given by simultaneously rotating all rod vectors.

\begin{proposition}[\cite{LiSun2025}, Propositions 4.36 and 4.38] \label{p:rigidity}
Suppose $\Phi$ is a harmonic map globally tamed by a rod structure $\mR$. Then there exists $Q \in SL(2;\mathbb{R})$, preserving the directions of $\mv_0$ and $\mv_n$, such that $Q \cdot \Phi$ is the unique harmonic map strongly tamed by $Q \cdot \mR$ (as constructed in Theorem~\ref{t:existence result}).
\end{proposition}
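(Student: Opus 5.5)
The plan is to reduce the rigidity statement to the uniqueness part of Theorem~\ref{t:existence result}. The reduction uses the fact that the distance between two harmonic maps into the non-positively curved target $SL(2;\R)/SO(2)$ is subharmonic with respect to the flat Laplacian on $\R^3$, written in cylindrical coordinates. Let $\Phi$ be globally tamed by $\mR$ and let $\Phi_0$ be the harmonic map strongly tamed by $\mR$. Near rods and turning points, both maps stay within bounded distance of the same local models $\Phi_{\mv_j}$ and $T_{z_j}^*\Phi_{\mv_j,\mv_{j+1}}$. Hence $d(\Phi,\Phi_0)$ is bounded near every compact part of $\p\bH$. Outside a compact set it is bounded by $d(\Phi,\Phi^\sharp_{\mv_0})+d(\Phi_0,\Phi^\sharp_{\mv_0})\le C$. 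So $f:=d(\Phi,\Phi_0)$ is a bounded, axisymmetric, subharmonic function on $\R^3\setminus\{z\text{-axis}\}$.

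Since the axis has zero capacity in $\R^3$, a bounded subharmonic function extends across it as a subharmonic function on all of $\R^3$. A bounded subharmonic function on $\R^3$ need not be constant, but its limit at infinity controls it. I therefore study the asymptotic behaviour of $\Phi$ relative to the model $\Phi^\sharp_{\mv_0}$.

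The key step is to show that $\Phi$ is asymptotic, at rate $O(r^{-1})$ or at least $o(1)$, to $Q^{-1}\cdot\Phi^\sharp_{\mv_0}$ for some $Q\in SL(2;\R)$. The first attempt is to write $\Phi=\Phi^\sharp_{\mv_0}\cdot(\text{bounded perturbation})$ and linearize. In the frame adapted to $\mv_0$ and $\mv_n$, the bounded difference behaves like a bounded solution of a linear elliptic system on $\R^3$ minus the axis. By a Liouville or Bôcher-type argument, such a solution tends to a constant element of the stabilizer of the model at infinity. For the $\Ae$ and $\ALF^\pm$ models, the symmetries that preserve tameness near the two semi-infinite rods are exactly linear maps $Q$ fixing the lines $[\mv_0]$ and $[\mv_n]$. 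These act on the model by the diagonal rescalings $\mathrm{diag}(\lambda,\lambda^{-1})$ in that basis. For the $\AFa$ case, where $\mv_0=\pm\mv_n$, the maps are the unipotent and diagonal ones preserving $[\mv_0]$. I expect this asymptotic analysis to be the main obstacle. It requires precise knowledge of bounded harmonic maps near the model at infinity, and in particular a proof that the defect converges instead of oscillating. For this I would try a monotonicity or frequency argument on large half-spheres, together with the rigidity of the model.

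Once $Q$ is found, $Q\cdot\Phi$ is harmonic and globally tamed by $Q\cdot\mR$. Here I use that $Q$ preserves the directions of $\mv_0,\mv_n$, so the asymptotic Type is unchanged up to rescaling. Moreover $d(Q\cdot\Phi,\Phi^\sharp_{Q\mv_0})\to0$ at infinity. Then $d(Q\cdot\Phi,\Phi_{0}')$ is a bounded subharmonic function on $\R^3$ tending to $0$ at infinity, where $\Phi_0'$ is the strongly tamed map for $Q\cdot\mR$. By the maximum principle it vanishes identically, so $Q\cdot\Phi=\Phi_0'$. Strong tameness of $Q\cdot\Phi$ then follows from that of $\Phi_0'$.
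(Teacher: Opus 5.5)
Your scaffolding is sound. $d(\Phi,\Phi_0)$ is bounded by the triangle inequality against the common local models and the model at infinity. It is subharmonic because $SL(2;\R)/SO(2)$ is non-positively curved. It extends across the axis because a line is polar in $\R^3$. The closing maximum-principle step also works: for a bounded subharmonic function on $\R^3$ the supremum is the limit of the spherical means, so convergence to $0$ away from thin cones around the axis would even suffice.

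But this only shows that the proposition is \emph{equivalent} to the asymptotic statement you postpone. Strong tameness is by definition $d(Q\cdot\Phi,\Phi^\sharp_{Q\mv_0})=O(r^{-1})$. So producing an admissible $Q$ with $d(Q\cdot\Phi,\Phi^\sharp_{Q\mv_0})\to 0$ is the whole content of the rigidity statement, and the rest of your argument is just the uniqueness part of Theorem~\ref{t:existence result}. The survey gives no proof here; it refers to \cite{LiSun2025}. That decisive step is named in your proposal but not proved.

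Nor does the route you indicate close it. Global tameness gives only $d(\Phi,\Phi^\sharp_{\mv_0})\le C$, so there is no small perturbation to linearize. The assertion that a bounded ``defect'' converges to a constant group element therefore has no basis. The natural substitute is a blow-down as $R\to\infty$, which needs two things you do not supply:
(i) a Liouville theorem saying that every harmonic map at bounded distance from the one-turning-point (or turning-point-free) model is $Q\cdot(\text{model})$ for an admissible $Q$. This is the proposition itself for the simplest rod structure, so it cannot be quoted.
(ii) an argument that the best-fitting $Q_R$ converge. The admissible group is non-compact (one-dimensional for $\Ae$, the two-dimensional stabilizer of $[\mv_0]$ for $\AFa$). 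Boundedness of $d$ only confines $Q_R$ to a compact set, and ``a monotonicity or frequency argument'' does not specify any quantity that excludes slow drift.

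Your description of the admissible deformations is also wrong in the $\ALF^{\pm}$ case. There the model is $\sim\mathrm{diag}(\rho^{-1},\rho)$ at infinity in a frame whose first vector spans the asymptotic circle direction, a line different from $[\mv_0]$ and $[\mv_n]$. A $Q\neq\pm I$ fixing $[\mv_0]$ and $[\mv_n]$ moves that line. So $Q\cdot\Phi^\sharp$ is at distance $\gtrsim\log\rho$ from $\Phi^\sharp$ and cannot describe the asymptotics of a globally tamed map. Your key step would therefore force $Q=\pm I$, i.e.\ ``globally tamed implies strongly tamed'', and this fails.

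For the one-turning-point rod structure with lines $[(\pm\frac{\alpha}{2},1)^{\top}]$, take the Gibbons--Hawking map with $V=T+\frac{\alpha}{2r}$ and $F=\frac{\alpha z}{2r}$. It is globally tamed for every $T>0$ but strongly tamed only for $T=1$. It is carried to a model (with $\alpha$ replaced by $\alpha/T$) by a $Q$ that is diagonal in the Gibbons--Hawking frame, i.e.\ one that fixes the circle direction rather than the rod lines.

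So the $\ALF$ case needs a different normalization before any convergence argument can start. In the $\Ae$ and $\AFa$ cases, the convergence itself is the missing core.
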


Finally, the following theorem characterizes the Type of $g$ in terms of the degree of the rod structure.

\begin{theorem}[\cite{LiSun2025}, Theorem 7.4]\label{thm:equivalence of degree and type}
Given a rod structure $\mR$, let $g$ be the conical gravitational instanton produced by Theorems~\ref{t:existence result} and \ref{thm:from tame harmonic map to toric GI}. Then:
\begin{itemize}
    \item $g$ is of Type $\mathrm{I}$ if and only if $d(\mathfrak{R}) = 0$;
    \item $g$ is of Type $\mathrm{II}$ if and only if $d(\mathfrak{R}) = 1$;
    \item $g$ is of Type $\mathrm{III}$ if and only if $d(\mathfrak{R}) \ge 2$.
\end{itemize}
\end{theorem}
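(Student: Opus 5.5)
We plan to relate the algebraic type of $g$ to the existence of a compatible (conformally) K\"ahler structure, and then read such a structure off from the boundary behavior of $\Phi$ along the rods. By the discussion in Section~\ref{ss:Type II case}, $g$ is Type $\I$ or $\II$ exactly when it is locally conformal to a K\"ahler metric for an orthogonal complex structure compatible with the orientation. For a toric metric we may assume this structure is $\mathbb T^2$-invariant. In the Type $\II$ case this is automatic, since $J$ is canonically determined by $\W^+$. In the Type $\I$ case we average over the $SU(2)$-sphere of parallel complex structures, where the torus acts through $SO(3)$ and a fixed direction exists because a torus acting on $\mathbb S^2$ has a fixed point. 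So the plan is:
\begin{itemize}
\item classify invariant (conformally) K\"ahler structures in terms of the harmonic map $\Phi$;
\item show that the existence of such a structure is equivalent to $\Phi$ being of a reduced form;
\item show that this reduced form is compatible with tameness only when $d(\mR)\le 1$.
\end{itemize}
The Type $\III$ statement then follows by exclusion.

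\textbf{Step 1 (degree $0$ $\Rightarrow$ Type $\I$, degree $1$ $\Rightarrow$ Type $\II$).} For $d(\mR)=0$ we expect the rod vectors to rotate in the hyperk\"ahler sense, so that the Gibbons--Hawking data fits the rod structure. Concretely, set $V=\sum_j \frac{\alpha_j}{2|{\bm x}-{\bm x}_j|}+T$, with poles at the turning points and weights read from consecutive rod vectors, and form $\Phi$ as in \eqref{eqn:GH harmonic map}. This map is axisymmetric harmonic and tamed by $\mR$. By Proposition~\ref{p:rigidity} and the uniqueness in Theorem~\ref{t:existence result}, it coincides with our $\Phi$ after an $SL(2;\R)$ normalization, so $g$ is Type $\I$.

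For $d(\mR)=1$ we do the same with the Biquard--Gauduchon ansatz \eqref{eq:harmonic functioin ansatz}. Starting from $U$ as in \eqref{eq:axisymmetric harmonic U}, with $f$ determined by the rod structure, we produce a tamed harmonic map and conclude again by uniqueness. Here we must allow cone angles, and we must check that every degree-one rod structure arises from some convex $f$. The degree-one condition, namely a single turn of the rod vectors, is what makes the slopes of $f$ increase monotonically from $-1$ to $1$.

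\textbf{Step 2 (converse).} Suppose $g$ is Type $\I$ or $\II$ with an invariant $J$. Pick the rod vector $\mv_j$, which vanishes on the $2$-sphere $S_j$. The $\mathbb T^2$-action is holomorphic, so the K\"ahler moment maps (Type $\I$) or the LeBrun--Tod coordinates (Type $\II$) give a map from the orbit space $\mathbb H$ into a polygon. Near each turning point, $J$ must be one of the two model complex structures on $\C^2$. Its orientation compatibility forces the counterclockwise rotation from $[\mv_{j-1}]$ to $[\mv_j]$ to have a fixed sense relative to $J$.

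Summing the rotations around the boundary of the Delzant-type polygon, which is convex for the K\"ahler metric $g_K$, gives a winding of at most one half-turn per $\pi$. Together with the asymptotic normalization at infinity, this gives $d(\mR)=0$ in the K\"ahler case. In the conformally K\"ahler case, the extra Killing field $\mathbb K$ costs exactly one extra turn, giving $d=1$. Since the three degree classes are disjoint and the types are exhaustive, the two implications of Step 1 combined with these converses give all three equivalences.

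\textbf{Main obstacle.} The hard part is the converse in Step 2, specifically the Type $\II$ winding count. It requires global convexity of the polytope for the incomplete conformal K\"ahler metric $g_K$ and control of its behavior at infinity for each of $\Ae$, $\ALF^\pm$ and $\AFa$. Our first attempt would be to compute $d(\mR)$ as a Maslov-type index of the boundary loop of $\Phi$ in $\partial(SL(2;\R)/SO(2))\cong\R\mathbb P^1$, and to identify it with the degree of the Gauss map of $\W^+$-eigenlines along the rods.
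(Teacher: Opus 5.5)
Your architecture (explicit models plus uniqueness of strongly tamed maps for $d\le 1$, a converse for Types I and II, Type III by exclusion) is sound. However, the converse in Step 2 has no working mechanism.

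The claim that $\mathbb K$ ``costs exactly one extra turn'' is false as stated. Taub--NUT ($\ALF^+$, lines $[(\tfrac12,1)^\top],[(-\tfrac12,1)^\top]$), anti-Taub--NUT ($\ALF^-$, the reflected lines) and Kerr ($\AF_\beta$) all have rod lines winding exactly once around $\R\mathbb P^1$. The last two have $d=1$ only because the $-1$ shift is absent for $\ALF^-$ and $\AF_\beta$ ends. What must be shown is twofold:
\begin{enumerate}
\item a Type I toric metric has winding one \emph{and} an $\Ae$ or $\ALF^+$ end;
\item a Type II one has winding two for $\Ae$ or $\ALF^+$ ends (the moment image of $g_K$ then closes up at infinity, as for Taub--Bolt or the orbifold compactification in the ALE case), and winding one for $\ALF^-$ or $\AF_\beta$ ends.
\end{enumerate}
Convexity of a moment polygon neither excludes an $\ALF^-$ end in Type I nor decides between the alternatives in Type II.

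The missing idea is the one the paper points to right after the statement: Type I and II toric metrics have explicit normal forms.
\begin{itemize}
\item In Type I, the kernel of $\mathfrak t\to\mathfrak{so}(3)$ contains a tri-holomorphic $X$, so $g$ is Gibbons--Hawking with axisymmetric $V$. B\^ocher plus Liouville give $V=T+\sum_j\alpha_j/(2|\bm x-z_je_3|)$ with $T\ge0$ and $\alpha_j>0$. Hence the end is $\Ae$ or $\ALF^+$, and the rod lines $[(-F_j,1)^\top]$ move monotonically ($F_j-F_{j-1}=\alpha_j$), so $d=0$.
\item In Type II, $\mathbb K\in\mathfrak t$, since it commutes with the torus and vanishes at its fixed points. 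For ALF/AF ends, the LeBrun--Tod/Ward/Biquard--Gauduchon reduction yields \eqref{eq:BG normailzed rod vector} with $f'$ increasing from $-1$ to $1$. Then $F_j<F_{j-1}$ whenever $f'_{j-1},f'_j$ have the same sign, so the finite steps pass through $[\partial_t]$ exactly once, at the sign change of $f'$. The closing arc contains $[\partial_t]$ exactly for $\ALF^+$ ends, which the shift absorbs, so $d=1$.
\end{itemize}
There is also a local version of this count. $[Y]$ lies in the counterclockwise arc from $[\mv_{j-1}]$ to $[\mv_j]$ iff $Y$ has weights of equal sign (for the orientation of $M$) at the corresponding fixed point. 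A tri-holomorphic field never does, while $\mathbb K$ does, or is a bolt's rod vector, only at the unique maximum of $\xi=\Sc_{g_K}$. This is the precise form of your ``extra turn'', but the end still has to be treated case by case.

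Step 1 also does not cover all of $d=1$. The family \eqref{eq:axisymmetric harmonic U} has $A>0$ and is normalized by $|X|_g\to1$, so it only produces $\ALF^\pm$ and $\AF_\beta$ ends. Degree-one $\Ae$ rod structures do exist. For example, the opposite of the rod structure $(1,0)^\top,(1,1)^\top,(0,1)^\top$ of the branched double cover of Eguchi--Hanson has lines $[(0,1)^\top],[(1,1)^\top],[(1,0)^\top]$ winding twice, so $d=1$; it is the branched double cover of anti-Eguchi--Hanson.

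Even for ALF/AF ends, the real content is a surjectivity statement. For prescribed $z_1<\dots<z_n$, lines $[\mv_j]$, end type (and $\beta$), you must find $A>0$, $a_j\in(0,1)$ with $\sum_j a_j=1$, and $P\in SL(2;\R)$ such that $P[\overline{\mv}_j]=[\mv_j]$ for all $j$ and the asymptotics are correct. Here the $F_j$ depend nonlinearly on all of $A,a_i,z_i$. Your justification (degree one ``makes the slopes of $f$ increase monotonically'') is the converse implication and does not address this.

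Finally, Proposition~\ref{p:rigidity} identifies $Q\cdot\Phi$ with the strongly tamed map of $Q\cdot\mR$, which is a different rod structure as soon as there are interior rods. So you should build the model map strongly tamed by $\mR$ itself. For Gibbons--Hawking, take the tri-holomorphic direction to be the Euclidean bisector of the closing arc from $[\mv_n]$ to $[\mv_0]$; this is how $\Phi^{\Ae}_{\mv_0,\mv_n}$ and $\Phi^{\ALF^+}_{\mv_0,\mv_n}$ are normalized.
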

\begin{remark}
    A more general statement is proved in \cite{LiSun2025}, where the data of the lattice $\Lambda$ is dropped in the definition of a rod structure. Then it gives rise to a PDE classification result for axisymmetric harmonic maps with given asymptotic controls.  
\end{remark}

In both the Type $\mathrm{I}$ and Type $\mathrm{II}$ cases, toric gravitational instantons (possibly with conical singularities) reduce to axisymmetric harmonic functions via the Gibbons-Hawking and LeBrun-Tod ansatz. In particular, they can be classified and admit explicit expressions. In the Type $\I$ case this is easy to write down, see Section \ref{ss:Type III model examples};  in the Type $\mathrm{II}$ case, this was carried out in \cite{BG}, see Section \ref{ss:BG}.

\subsection{Cone angles and rod lengths}
\label{ss:cone angles and rod lengths}

Given the above discussion, it becomes clear that, in order to construct toric gravitational instantons, the central task is to choose a rod structure $\mR$ such that all cone angles $\vartheta_j$ are equal to $2\pi$. A natural idea is to achieve this by varying the rod lengths. At first glance, this is a reasonable strategy, since the number of parameters matches the number of constraints, and it is not difficult to see that the cone angles $\vartheta_j$ depend continuously on the rod lengths. However, in general, we do not have an explicit description of the associated axisymmetric harmonic maps, and hence of the cone angles, making the situation more subtle.

First, it may happen that the cone angles $\vartheta_j$ do not vary at all when the rod lengths are changed. As a simple example, consider the double cover of the Calabi-Eguchi-Hanson metric branched along the totally geodesic $2$-sphere. This yields a gravitational instanton with cone angle $4\pi$ along the $2$-sphere. It is associated with a rod structure of asymptotic Type $\Ae$ with three rods, whose rod vectors are $(1,0)^{\top}$, $(1,1)^{\top}$, and $(0,1)^{\top}$. In this case, there is only one finite rod, and varying its length merely rescales the metric. The cone angle remains equal to $4\pi$ and hence does not change.

More generally, one has the following (see \cite{LiSun2025}, Remark 7.8):
\begin{proposition}
If $d(\mathfrak R) = 0$, then all cone angles $\vartheta_j$ remain constant under variations of the rod lengths.
\end{proposition}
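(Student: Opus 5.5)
The plan is to use Theorem~\ref{thm:equivalence of degree and type} to reduce to an explicit Gibbons-Hawking description and then read the cone angles off directly. If $d(\mR)=0$, the conical metric $g$ attached to $\mR$ is of Type $\I$. A Type $\I$ toric Ricci-flat metric is locally hyperk\"ahler. Its torus action preserves $\W^-$ and acts on the parallel $\Lambda^+$ trivially up to a constant rotation, so after choosing a suitable generator $X$ of the torus action we obtain a tri-holomorphic Killing field. Hence $g$ is given on $M^\circ$ by the Gibbons-Hawking ansatz \eqref{eq:Type I metric}. Here $V$ is an axisymmetric harmonic function on $\R^3$, and its harmonic map is of the form \eqref{eqn:GH harmonic map} or \eqref{eqn:GH harmonic map with reversed orientation}, up to the $SL(2;\R)$ action on rod vectors.

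Next we identify $V$ from the tameness conditions. Near each turning point $z_j$ the harmonic map must be comparable to $T_{z_j}^*\Phi_{\mv_j,\mv_{j+1}}$, which forces $V$ to have a pole $\frac{\alpha_j}{2|\bm x-z_j|}$ on the axis. Along the interiors of the rods, $V$ must stay bounded and positive. At infinity, the asymptotic Type fixes the constant term ($0$ for $\Ae$, positive for $\ALF^\pm$). By uniqueness of strongly tame maps (Theorem~\ref{t:existence result}) and B\^ocher's theorem,
$$V=c+\sum_{j=1}^n\frac{\alpha_j}{2|\bm x-z_j\bm e_z|}.$$
The weights $\alpha_j$ are determined purely by the rod vectors, through the jump of $F$ across $z_j$, namely $\mv_{j}-\mv_{j-1}$ in the $\partial_{\phi_1}$ component, and not by the rod lengths. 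The $\AFa$ case is handled by the same argument with $\Phi^{\AFa}$ as the background model.

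The cone angle along the rod $\mI_j$ can then be computed from the intrinsic formula
$$\vartheta_j=2\pi\lim_{\rho\to0}\frac{\sqrt{\rho\Phi(\mv_j,\mv_j)}}{e^{\nu}\rho}.$$
For a Gibbons-Hawking metric the augmentation is explicit: $e^{2\nu}=V$ up to a constant, normalized by $\vartheta_0=2\pi$. Near an interior rod point, $F$ is locally constant, equal to some $F_j$. The Killing field vanishing on $\mI_j$ is $\partial_{\phi_2}-F_j\partial_{\phi_1}$, and its norm squared is $V\rho^2$. This gives
$$\vartheta_j=2\pi\cdot\frac{|\mv_j\text{-normalization}|}{(\text{period of the vanishing Killing field})},$$
which depends only on $F_j$ and the lattice vector $\mv_j$. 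Since $F_j=\frac12\sum_{i\le j}\alpha_i-\frac12\sum_{i>j}\alpha_i$ (up to a constant), it depends only on the $\alpha_i$ and not on the $z_i$. So $\vartheta_j$ is independent of the rod lengths.

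The main obstacle is the first step: justifying that a Type $\I$ toric conical instanton is globally Gibbons-Hawking with respect to a lattice direction, and matching the tameness data to the $\alpha_j$. If finding a tri-holomorphic generator is delicate, I would first try to pass to the local universal cover, where the metric is hyperk\"ahler, and use the fact that the $\mathbb T^2$ action acts on $\mathbb S^2$ of complex structures by rotations. This yields a one-dimensional subtorus acting trivially, and that subtorus can be identified with the direction $(1,0)$ in \eqref{eqn:GH harmonic map} via the $SL(2;\R)$ normalization of Proposition~\ref{p:rigidity}.
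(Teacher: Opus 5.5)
Your route matches the one the paper has in mind: use Theorem~\ref{thm:equivalence of degree and type} to get Type~$\I$, find a tri-holomorphic field in the torus, pass to Gibbons--Hawking form, and read the angles off near the rods. The paper itself only cites \cite{LiSun2025}, Remark~7.8, and notes that Type~$\I$ toric metrics are explicit Gibbons--Hawking metrics.

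The gap is in the step that carries the proposition. Everything you compute is relative to the frame $(X,Y)=(\partial_{\phi_1},\partial_{\phi_2})$ in which the metric takes Gibbons--Hawking form. But $X$, the tri-holomorphic direction, is a property of the metric, and the metric changes with the rod lengths. Appealing to ``the $SL(2;\R)$ normalization of Proposition~\ref{p:rigidity}'' does not pin $X$ down, since that proposition only relates globally and strongly tamed maps.

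Your recipe for $\alpha_j$ as the $\partial_{\phi_1}$-component of $\mv_j-\mv_{j-1}$ also presupposes that all $\mv_j$ have the same $\partial_{\phi_2}$-component. In general $\mv_j=\lambda_j(\partial_{\phi_2}-F_j\partial_{\phi_1})$. Near $\mI_j$ the transverse part of the metric is, to leading order, $V(d\rho^2+\rho^2d\psi^2)$ with $\partial_\psi=\partial_{\phi_2}-F_j\partial_{\phi_1}$. This gives
\[
\vartheta_j=2\pi\,\frac{|\ell(\mv_j)|}{|\ell(\mv_0)|}\qquad\text{for any linear }\ell\neq 0\text{ with }\ell(X)=0 .
\]
So equal $\partial_{\phi_2}$-components is exactly the case where all $\vartheta_j=2\pi$. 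In this formula the $\alpha_j$, the $F_j$ and the turning points drop out entirely. What actually has to be proved is that the line $[X]$ does not depend on the rod lengths.

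That follows from strong tameness at infinity, which you never use for this purpose. Your explicit $V=c+\sum_j\alpha_j/(2r_j)$ shows that $\Phi$ is asymptotic to the Gibbons--Hawking model with $V=c+\alpha/(2r)$ in the frame $(X,Y)$. At the same time, strong tameness says $\Phi$ is $O(r^{-1})$-close to the fixed model $\Phi^{\sharp}_{\mv_0,\mv_n}$.
\begin{itemize}
\item In the $\Ae$ case both models are homogeneous of degree $0$ in $(\rho,z)$, so they coincide. Flat $\R^4$ has a unique tri-holomorphic circle in $\mathbb T^2$ for the given orientation, which fixes $[X]$.
\item In the $\ALF^+$ case, $[X]$ is the direction whose orbits stay bounded at infinity.
\end{itemize}
Either way $[X]$ depends only on $\mv_0$, $\mv_n$ and $\sharp$. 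In the paper's $\Ae$ examples it is $[\mv_0-\mv_n]$. This gives $\vartheta_1=4\pi$ for the branched double of Eguchi--Hanson, matching Section~\ref{ss:cone angles and rod lengths}.

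You can also bypass both Theorem~\ref{thm:equivalence of degree and type} and the tri-holomorphic step you flag as the main obstacle:
\begin{enumerate}
\item Fix $[X]$ from $\mv_0,\mv_n$.
\item Read $F_j$ off the directions $[\mv_j]$.
\item Write down the Gibbons--Hawking map for arbitrary turning points and check it is strongly tamed by $\mR$.
\item Invoke the uniqueness in Theorem~\ref{t:existence result}.
\end{enumerate}
The angle formula above is then visibly independent of the rod lengths.
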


Secondly, there are cases in which the cone angles vary in a favorable way. As an example, consider the Kerr metrics, which form an explicit family of smooth $\AF_\beta$ gravitational instantons $g_\beta$ on $\mathbb{R}^2 \times S^2$ for $\beta \in [0,1)$. The associated rod structure consists of three rods with directions $(1,0)^\top$, $(0,1)^\top$, and $(1,0)^\top$, where the vector $(0,1)^\top$ generates the screw rotation at infinity in the $\AF_\beta$ model.

When $\beta = 0$, one recovers the Schwarzschild metric, while as $\beta \to 1$, an explicit analysis shows that the length $l_\beta$ of the finite rod tends to infinity. Now fix $\beta \in (0,1)$. For any $\gamma \in (0,1)$, one can modify the smooth metric $g_\gamma$ to produce a conical $\AF_\beta$ gravitational instanton with varying rod length as follows.

First, replace the lattice $\Lambda$ by the lattice generated by $(1,0)^\top$ and $(0,\beta\gamma^{-1})^\top$. This produces a toric Ricci-flat metric with cone angle
\[
\vartheta_{\beta,\gamma} = 2\pi \beta \gamma^{-1}
\]
along a $2$-sphere. Next, rescale the metric by a factor $\beta^{-1}\gamma$. The resulting metric $g_{\beta,\gamma}$ has $\AF_\beta$ asymptotics, with associated rod length
\[
l_{\beta,\gamma} = l_\gamma \, \beta^{-1} \gamma.
\]
As $l_{\beta,\gamma} \to \infty$ (since $\beta$ is fixed and $l_\gamma\to\infty$ happens when $\gamma\to1$, $l_{\beta,\gamma}\to\infty$ happens precisely when $\gamma \to 1$), the cone angle $\vartheta_{\beta,\gamma}$ tends to $2\pi\beta < 2\pi$, while as $l_{\beta,\gamma} \to 0$ (i.e., $\gamma \to 0$), the cone angle $\vartheta_{\beta,\gamma}$ diverges to infinity. By continuity of the cone angle, there exists a critical value $\gamma_0$ such that $\vartheta_{\beta,\gamma_0} = 2\pi$, and the corresponding metric $g_{\beta,\gamma_0}$ is smooth. Of course, we already know this occurs when $\gamma = \beta$, but this discussion illustrates that, in favorable situations, one can tune the cone angles to $2\pi$ by varying the rod length.

More generally, in the Type $\mathrm{II}$ case, the results of Biquard--Gauduchon \cite{BG} provide explicit relations between cone angles and rod lengths, which in particular imply the uniqueness of the critical rod length (up to scaling).

\subsection{The gravitational instantons in \texorpdfstring{\cite{LiSun2025}}{LiSun2025}}

Fix $\beta\in (0, 1)$ and an integer $n\geq 1$. For $\bl=(l_1, \cdots, l_n)\in \R^n_+$,  we consider the rod structure  $\mathfrak{R}_{\bl}$ with

\begin{itemize}
\item turning points $\{z_j\}_{j=1}^{n+1}$, such that $z_1=0$ and $z_{j+1}-z_{j}=l_j$ for $j=1, \cdots, n$;
\item lattice $\Lambda=\mathbb Z^2\subset \mathbb R^2$;
    \item rod vectors $\mathfrak{v}_0=\mathfrak{v}_{n+1}=(1,0)^{\top}$ and for $j\neq0,n+1$, $\mathfrak{v}_{j}=(0,1)^{\top}$ when $j$ is odd, and $\mathfrak{v}_{j}=(-1,1)^{\top}$ when $j$ is even;
    \item asymptotic Type $\AFa$
\end{itemize}
 We let $(\Phi_{\bl}, \nu_{\bl})$ be the augmented harmonic map which is strongly tamed by  $\mR_{\bl}$, and let $g_{\bl}$ be the associated conical $\AFa$ gravitational instanton on $X_n$. Here $X_n$ is the oriented 4-manifold diffeomorphic to 
 \begin{equation}\label{eqn:definition of Xn}X_n:=\begin{cases}(S^2\times \R^2)\# k {\CP^2}\# k \overline{\CP^2},  & \text{if } n=2k+1;
\\ (S^2\times \R^2)\# k{\CP^2} \# (k+1)\overline{\CP^2}, & \text{if } n=2k+2.
\end{cases}
\end{equation}
They are simply-connected and non-spin. In particular, it is clear that they do not admit Type $\I$
 gravitational instantons. 
 
\begin{theorem}[\cite{LiSun2025}]\label{t:adjusting angle}
There exists an $\bl\in \R^n_+$ such that $g_{\bl}$ is smooth (so is an $\AFa$ gravitational instanton).
\end{theorem}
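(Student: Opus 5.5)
We propose a continuity/degree argument on the parameter space $\R^n_+$ of rod lengths, as in the Kerr illustration of Section \ref{ss:cone angles and rod lengths}. By Theorems \ref{t:existence result} and \ref{thm:from tame harmonic map to toric GI}, normalize the augmentation so that $\vartheta_0=\vartheta_{n}=2\pi$ on the two infinite rods. Then each $\bl$ gives a vector of cone angles $\Theta(\bl)=(\vartheta_1(\bl),\dots,\vartheta_{n-1}(\bl))$ on the finite rods $\mI_1,\dots,\mI_{n-1}$. Smoothness of $g_{\bl}$ means $\Theta(\bl)=(2\pi,\dots,2\pi)$. Since the total scaling removes one parameter, we work on the slice $\{\sum l_j=1\}$, or fix the asymptotic normalization; we then count: $n$ lengths modulo scaling leaves $n-1$ free parameters for $n-1$ conditions. (If instead the normalization of $\Lambda$ through $\AFa$ fixes the scale, one gets $n$ conditions on $n$ lengths, and the same argument applies.)

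\textbf{Step 1: Continuity.} We show that $\bl\mapsto\Theta(\bl)$ is continuous. Uniqueness in Theorem \ref{t:existence result}, together with uniform local control of the distance $d(\Phi_{\bl},\text{model})$ near the rods and turning points (barrier arguments in the nonpositively curved target), gives local smooth convergence of $\Phi_{\bl}$ as $\bl$ varies. The intrinsic formula for $\vartheta_j$ as a limit at $\rho\to0$ then passes to the limit by the rod regularity estimates.

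\textbf{Step 2: Boundary behaviour.} We analyse the degenerations where some $l_j\to0$ or $l_j\to\infty$ (after rescaling) and identify the limiting conical instantons via bubble/rescaling limits. When a finite rod shrinks, the two adjacent turning points merge. The rescaled neighbourhood converges to a toric ALE-type model whose rod structure has vectors $\mv_{j-1},\mv_{j},\mv_{j+1}$. Here, as for the branched Eguchi–Hanson example, the cone angle on the shrinking rod is computed from the degree-zero rigid models, where it is locally constant by the Proposition following Section \ref{ss:cone angles and rod lengths}. With the alternating vectors $(0,1)^\top,(-1,1)^\top$, we expect this angle to exceed $2\pi$. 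When a rod length goes to infinity, the configuration splits into pieces asymptotic to Kerr-type ($\AF$) or flat models, and the angle on the long rod tends to a value below $2\pi$, as in the Kerr computation $\vartheta\to 2\pi\beta$. Concretely, we want the inequalities
\[
\vartheta_j\to\text{(value }>2\pi)\ \text{as } l_j\to0,\qquad \vartheta_j\to\text{(value }<2\pi)\ \text{as } l_j\to\infty,
\]
uniformly enough in the other parameters.

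\textbf{Step 3: Topological conclusion.} With these boundary signs, $\Theta-2\pi$ on a large cube-like region in log-length coordinates points outward/inward on opposite faces in each coordinate. A Poincaré–Miranda (or Brouwer degree) argument then yields a zero. This gives $\bl$ with all $\vartheta_j=2\pi$, hence $g_{\bl}$ is smooth, and by Theorem \ref{thm:equivalence of degree and type} it is of Type $\III$.

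The main obstacle is Step 2. The inequalities must hold on whole faces of the parameter region, where several lengths degenerate simultaneously at different rates, and multi-scale bubble trees appear. For each face one needs a hierarchical limit analysis, using Proposition \ref{p:rigidity} to identify the limiting tame harmonic maps. The key sign computation on a degenerate piece can be made explicit through Gibbons–Hawking or Biquard–Gauduchon formulas, since the limiting pieces have low degree. I would first treat $n=1,2$ explicitly, then set up an induction on the number of rods via these splitting limits.
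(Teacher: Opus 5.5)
Your architecture (continuity of the angle map, limits under degeneration of the rod lengths, a degree argument on a cube) is the paper's. However, Step 2, which carries all the content, has a genuine gap, and the mechanism you propose for it is partly wrong.

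First, some bookkeeping. $\mR_{\bl}$ has turning points $z_1,\dots,z_{n+1}$, so there are $n$ finite rods $\mI_1,\dots,\mI_n$, and the infinite rods are $\mI_0,\mI_{n+1}$. Since $\Lambda=\mathbb Z^2$ and strong tameness fix the scale, rescaling the lengths is not a symmetry (for $n=1$ this is the Kerr picture, where the single rod length tunes the angle). So the problem is $n$ equations in $n$ unknowns, and the slice $\sum l_j=1$ would lose a parameter.

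More importantly, when an interior rod $\mI_j$ with $2\le j\le n-1$ shrinks, its neighbours carry the \emph{same} vector, $\mv_{j-1}=\mv_{j+1}$. The rescaled limit is therefore not of Type $\Ae$. Up to $GL(2,\mathbb Z)$ its rod vectors are those of Kerr, of degree one in either orientation, so the rigidity for $d(\mR)=0$ does not apply. Only for the end rods $\mI_1,\mI_n$ is the bubble, up to orientation, the branched double cover of Eguchi--Hanson with angle $4\pi$. For an isolated short interior rod the angle plausibly still exceeds $2\pi$ (compare Kerr, where it diverges as the rod shrinks), but not for the reason you give.

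The more serious problem is that Poincar\'e--Miranda needs the coordinatewise sign $\vartheta_j>2\pi$ on the \emph{whole} face $\{l_j=\epsilon\}$, including points where many rods are short simultaneously. For instance, if all $l_i=\epsilon$, the rescaled limit carries the full rod structure, of degree $\ge 2$ for $n\ge 3$. Then no Gibbons--Hawking or Biquard--Gauduchon formula is available, and your induction on explicit low-degree pieces has nothing to compute with.

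The paper avoids this in two ways. First, it homotopes $\mathcal A$ to the explicit map $\mathcal A_0(\bl)=(e^{-l_1}(2\pi+1),\dots,e^{-l_n}(2\pi+1))$ and only shows that $t\mathcal A+(1-t)\mathcal A_0\neq\btheta_0$ on $\partial[\epsilon,N]^n$. It therefore suffices that \emph{some} coordinate of $\mathcal A$ lies on the same side of $2\pi$ as the corresponding coordinate of $\mathcal A_0$. On the large side this is supplied by the maximal rod alone: after the collapsing rescaling its angle tends to $2\pi\beta$ or $2\pi(1-\beta)$, depending on parity, which is where $\beta\in(0,1)$ enters.

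Second, on the small side the paper uses a soft inequality rather than any computation. In a bubble of Type $\Ae$ with $k$ turning points one has $\vartheta_1\ge\vartheta_k$ and $\vartheta_{k-1}\ge\vartheta_0$, a consequence of Bishop--Gromov volume comparison. Combined with the normalization $\vartheta_0=\vartheta_{n+1}=2\pi$, this is what excludes a zero of the homotopy once some $l_j<\epsilon$ (with all $l_j<2N$). Your proposal lacks this comparison principle or any substitute for it, so Step 2 does not close as written.

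Finally, the Type $\III$ conclusion only holds for $n\ge 3$; for $n=1,2$ one recovers Kerr and Chen--Teo.
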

\begin{remark}
    It is not known from the proof whether the $\bl$ above is uniquely determined by $\beta$. 
\end{remark}
\begin{remark}
Notice that $d(\mR_l)=1$ when $n=1, 2$,  $\min (d(\mR_l), d(\overline{\mR}_l))>1$ when $n\geq 3$. 
By the classification result Theorem \ref{thm:equivalence of degree and type}, we know that when $n=1$ or $n=2$, the gravitational instantons constructed above coincide with the Kerr spaces and the Chen-Teo spaces, and when $n\geq 3$  the gravitational instantons constructed above is Type $\III$ for both orientations. 
\end{remark}

We give a brief outline of the main ideas in the proof here. Denote by $\vartheta_j(\bl)$  the cone angle along the 2-sphere associated to $\mI_j$. By construction, we know that $\vartheta_0=\vartheta_{n+1}=2\pi$.  We define the \emph{angle} map 
\begin{equation}
    \mathcal{A}:\mathbb{R}^n_+\to\mathbb{R}^n_+;(l_1,\ldots,l_n)\mapsto (\vartheta_1(\bl),\ldots,\vartheta_n(\bl)).
    \end{equation}
It suffices to show that the image of $\mathcal{A}$ contains the point $${\btheta}_0:=(2\pi,\ldots,2\pi).$$  Denote  
    $$\mathcal{A}_{0}:\mathbb{R}^n_+\to\mathbb{R}^n_+; (l_1,\ldots,l_n)\mapsto(e^{-l_1}(2\pi+1),\ldots,e^{-l_n}(2\pi+1)).$$
Theorem \ref{t:adjusting angle} follows from the following result and a simple degree argument. 
\begin{proposition} \label{p:boundary avoid}There are $0<\epsilon<N$ such that the boundary of the cube $[\epsilon,N]^n$  is contained in the set \    $$\mathcal S:=\{\bl=(l_1, \cdots, l_n)\in \R_n^+\mid t\mathcal{A}(\bl)+(1-t)\mathcal{A}_0(\bl)\neq \btheta_0, \ \ \forall \ \  t\in [0, 1]\}.$$
\end{proposition}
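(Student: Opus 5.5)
We need $\epsilon$ and $N$ such that on every face of $\partial[\epsilon,N]^n$ no convex combination $t\mathcal A(\bl)+(1-t)\mathcal A_0(\bl)$ equals $\btheta_0$. Since all entries of $\mathcal A$ and $\mathcal A_0$ are positive, it is enough to find, for each boundary point $\bl$, one index $j$ with
\[
\text{either}\quad \vartheta_j(\bl)>2\pi \text{ and } e^{-l_j}(2\pi+1)>2\pi,\qquad\text{or}\quad \vartheta_j(\bl)<2\pi \text{ and } e^{-l_j}(2\pi+1)<2\pi .
\]
Then the $j$-th coordinate of every convex combination stays strictly on one side of $2\pi$. The choice of $\mathcal A_0$ makes its $j$-th entry $>2\pi$ exactly when $l_j<\log\frac{2\pi+1}{2\pi}$, and $<2\pi$ when $l_j$ is larger. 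So I would choose $\epsilon<\log\frac{2\pi+1}{2\pi}$ and $N$ large, and the task reduces to the following.

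\textbf{Claim A.} If $l_j=\epsilon$ is small for some $j$, then some index $i$ with $l_i$ small has $\vartheta_i>2\pi$.

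\textbf{Claim B.} If no $l_j$ is small but some $l_j=N$ is large, then some index $i$ with $l_i$ large has $\vartheta_i<2\pi$.

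I would carry this out in the following order.

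First, I would establish \emph{scaling and decoupling limits} for the strongly tamed harmonic maps of Theorem~\ref{t:existence result}. The uniqueness in that theorem, together with convexity and nonpositive curvature of $SL(2;\R)/SO(2)$, should give continuity of $\Phi_{\bl}$ and of the cone angles in $\bl$, including degenerations where some $l_j\to0$ or $l_j\to\infty$. In the degenerate limit I would rescale around the relevant cluster of turning points. The limiting object should be a harmonic map strongly (or at least globally) tamed by a simpler rod structure, after normalizing with Proposition~\ref{p:rigidity}. That rod structure is either the one obtained by merging collapsed rods, or the two halves obtained by splitting at an infinitely long rod. Cone angles along the surviving rods converge to those of the limit. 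The cone angles along the collapsing rods are read off from the rescaled bubble, which is a harmonic map tamed by the cluster with Type $\Ae$/$\ALF$-like asymptotics.

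Second, I would compute angles on the model pieces, which are all of degree $\le 1$ and hence explicit.

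For Claim A, a short rod $\mI_j$ with vectors $(0,1)^\top,(-1,1)^\top$ (in some order) has neighbours whose vectors do not generate the lattice compatibly. The bubble is a conical Type $\I$/$\II$ model: a rescaled Eguchi–Hanson-type or Gibbons–Hawking configuration, as in the double-cover example of Section~\ref{ss:cone angles and rod lengths}. For these I expect the cone angle along the short rod to be forced $\ge 4\pi$, or to blow up like the Kerr computation $\vartheta_{\beta,\gamma}\to\infty$ as $l\to0$. The explicit formulas of Section~\ref{ss:Type III model examples} and of \cite{BG} should confirm this.

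For Claim B, when a rod becomes long the configuration splits at that rod. Near its end the geometry is modelled on a Kerr-type piece with rod structure $(1,0),(0,1),(1,0)$. By the explicit Kerr analysis, the angle on a long rod tends to $2\pi\beta<2\pi$. The same should hold for pieces with rod vector $(-1,1)$, with $\beta$ replaced by an appropriate value below $1$ obtained after an $SL(2;\R)$ change of basis.

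Third, for mixed faces, where some coordinates equal $\epsilon$ and others are intermediate or equal $N$, I would argue by compactness and contradiction. Suppose that along sequences no good index exists. Passing to the multi-scale limit, apply Claim A at the smallest scale where collapse occurs; this handles every face containing an $\epsilon$ coordinate. On faces where all coordinates are $\ge\epsilon$ and some equal $N$, apply Claim B at the largest scale.

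The main obstacle is the first step: controlling the cone angles uniformly under multi-scale degenerations of the rod lengths. This requires showing that the rescaled harmonic maps converge to the tamed model maps with convergence of the augmentation $\nu$ at the rods, so that the angle formula $\vartheta_j=2\pi\lim_{\rho\to0}\frac{\sqrt{\rho\Phi(\mv_j,\mv_j)}}{e^\nu\rho}$ passes to the limit. I would first try to obtain uniform distance bounds $d(\Phi_{\bl},\Phi_{\text{model}})\le C$ from the maximum principle for $d$, since it is subharmonic between harmonic maps into a nonpositively curved target. Then I would upgrade these bounds to convergence using the rod regularity estimates of \cite{LiTian,Nguyen,HKWX}.
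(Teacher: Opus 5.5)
Your proposal has a genuine gap. Claim~A has no working mechanism, and the reduction to a single index $j$ at which $\vartheta_j-2\pi$ and $e^{-l_j}(2\pi+1)-2\pi$ have the same strict sign discards exactly the information needed to get one.

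A bubble at the scale of a collapsing cluster $\mI_a,\dots,\mI_{b-1}$ only determines the short rods' angles \emph{relative} to the angles $\vartheta_{a-1},\vartheta_b$ of the two flanking rods. Those angles are unknown unless a flanking rod is $\mI_0$ or $\mI_{n+1}$. So ``some short rod has $\vartheta_i>2\pi$'' cannot be read off from the bubble.

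Nor are the bubbles explicit, because the model pieces are not all of degree $\le 1$. Take $n\ge 5$ and $l_1=\dots=l_4=\epsilon$. The bubble has rod vectors $(1,0),(0,1),(-1,1),(0,1),(-1,1),(0,1)$. This is a genuine $\Ae$ rod structure of degree at least $2$, so by Theorem~\ref{thm:equivalence of degree and type} the bubble is a Type $\III$ conical metric with no formula.

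What you are missing is the paper's angle comparison for $\Ae$ bubble limits: $\vartheta_1\ge\vartheta_k$ and $\vartheta_{k-1}\ge\vartheta_0$. It comes from Bishop--Gromov and holds in any degree. To exploit a comparison \emph{between} rods, you must keep the coupling through $t$. If $\bl\notin\mathcal S$, then for a single $t\in(0,1]$ every angle satisfies $\vartheta_i=\bigl(2\pi-(1-t)e^{-l_i}(2\pi+1)\bigr)/t$. Hence angles increase with rod length, and the collapsing rods carry strictly the smallest angles. This contradicts $\vartheta_a\ge\vartheta_b$. The limiting case $t\to 1$ would produce a smooth non-flat $\Ae$ bubble, which Bishop--Gromov rigidity excludes.

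Your order of quantifiers is also backwards. The paper first shows that $l_j>N$ for some $j$ forces $\bl\in\mathcal S$ with \emph{no} restriction on the other lengths. Rescaling by the maximal length degenerates the lattice and sends that rod's angle to $2\pi\beta$ or $2\pi(1-\beta)$. This fixes $N$. Only then is $\epsilon=\epsilon(N)$ chosen, for configurations with all $l_i<2N$; that upper bound is what makes the short-rod bubbling compact.

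You instead restrict Claim~B to configurations without short rods. You send every face containing an $\epsilon$-coordinate to Claim~A, including corners where some $l_k=N$. Then either $\epsilon$ and $N$ depend on each other circularly, or Claim~A must hold uniformly as other rods become arbitrarily long. There is no reason for the latter. For example, take $n=4$ with $l_2=l_3=\epsilon$ and $l_1=l_4=N$. The short pair is flanked by long rods whose angles are near $2\pi\beta$ and $2\pi(1-\beta)$, and the comparison only bounds $\vartheta_2,\vartheta_3$ below by quantities of that size. Corners like this must be handled by the long-rod estimate, which is why that estimate has to be proved first and uniformly.
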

This is in turn a consequence of the following two statements 

\begin{itemize}
    \item There exists an $N>0$ such that if $l_j>N$ for some $j$, then $(l_1, \cdots, l_n)\in \mathcal S$.
    \item  There exits an $\epsilon>0$ such that if $l_j<2N$ for all $j$ and $l_j<\epsilon$ for some $j$, then $(l_1, \cdots, l_n)\in \mathcal S.$
\end{itemize}

As mentioned above, the main difficulty lies in the lack of explicit control over the cone angles in general. Nevertheless, for the purposes of the above statements, it suffices to obtain limiting information on the cone angles when the rod structure degenerates, i.e., when one of the rod lengths tends to zero or infinity. This is achieved via a bubbling analysis of harmonic maps, carried out in \cite[Section~5]{LiSun2025}, by comparison with a family of \emph{background} maps constructed by gluing suitably rescaled model maps through an induction on scale.

It is worth noting that, although the rod structures under consideration are of asymptotic Type $\AF_\beta$, the bubbling limits may involve rod structures of asymptotic Type $\Ae$, corresponding to non-collapsed limits of the associated toric gravitational instantons. On the other hand, during the bubbling process, the lattice $\Lambda$ appearing in the definition of the rod structure may degenerate. As a result, the limiting harmonic maps need not correspond to genuine rod structures, and hence may fail to give rise to toric gravitational instantons. This phenomenon reflects the presence of \emph{collapsing} in the degeneration.

While the bubbling analysis does not provide a complete description of the cone angles, it yields two key estimates that are sufficient to establish the two statements above.

First, suppose that the maximal rod length, say $l_j$, tends to infinity. By rescaling the rod structure by $l_j^{-1}$, one obtains a degeneration of the lattice $\Lambda$. For the rod structures under consideration, one can show that the corresponding cone angle $\vartheta_j$ converges to either $2\pi\beta$ or $2\pi(1-\beta)$, depending on the parity of $j$. In particular, it is always strictly less than $2\pi$. This also explains the requirement that $\beta \in (0,1)$ in our construction. The Kerr metrics discussed in Section~\ref{ss:cone angles and rod lengths} provide a concrete example of this phenomenon.

Second, if a bubble limit has a rod structure $\mR'$ of asymptotic Type $\Ae$, then one obtains an \emph{angle comparison} estimate: if the rod structure has $k$ turning points, then
\[
\vartheta_1 \ge \vartheta_k, \qquad \vartheta_{k-1} \ge \vartheta_0.
\]
In particular, if $\vartheta_k = 2\pi$, then $\vartheta_1 \ge 2\pi$. This comparison ultimately follows from the Bishop--Gromov volume comparison principle.

\begin{remark}
    From our proof it is not clear whether $\bl$ depends uniquely or continuously on $\beta$. There is also no explicit understanding of the geometry of $g_{\bl}$. In \cite{LiSun2025}, it is shown that the limit of $g_{\bl}$ as $\beta\rightarrow 0$ can nevertheless be understood. More precisely, 
as $\beta\rightarrow 0$, in the pointed Gromov-Hausdorff sense, without rescalings, $(X_{n}, g_\beta)$ splits into the union of 
   \begin{itemize}
       \item A  Taub-NUT metric, an anti-Taub-Bolt metric and $k-1$ Schwarzschild metrics (when $n=2k\geq 2$);
       \item An anti-Taub-Bolt metric, a  Taub-Bolt metric and $k-1$ Schwarzschild metrics (when $n=2k+1\geq3$);
   \end{itemize}

    When $n=1$ or $n=2$ it is also possible to see this directly using the explicit expression of the Kerr metrics and the Chen-Teo metrics. 
\end{remark}

\subsection{The gravitational instantons of  Khuri-Weinstein-Yamada \texorpdfstring{\cite{KWY}}{KWY}} \label{ss:KWY}
In \cite{KWY}, another class of toric Type $\III$ gravitational instantons was constructed earlier via an explicit ansatz, which in turn depends on a simplification of the axisymmetric harmonic map formalism. There is a very similar construction in the Lorenztian setting \cite{KorotkinNicolai1994PeriodicAnalog}. There is an associated generalized rod structure consisting of infinitely many turning points arranged in a periodic configuration. Equivalently, one may view this as a rod structure in which $\mathbb R$ is replaced by $S^1$.

The construction is based on the observation that when a map $\Phi: \mathbb H^\circ\rightarrow SL(2;\R)/SO(2)$ is diagonalizable, i.e., of the form
$$
\Phi = \begin{pmatrix}
e^{u_1} & 0 \\
0 & e^{u_2}
\end{pmatrix},
$$
then $\Phi$ is an axisymmetric harmonic map if and only if $u_1$ and $u_2$ are axisymmetric harmonic functions. Correspondingly the image of $\Phi$ is a \emph{geodesic} in $SL(2; \R)/SO(2)$. In the special case when $u_1 = -\log \rho$ and $u_2 = \log \rho$, this yields the flat product $\mathbb R^3 \times S^1$. 
	
To construct nontrivial toric gravitational instantons, \cite{KRWY} considers axisymmetric harmonic functions that are periodic in the $z$-direction. Equivalently, one works with axisymmetric harmonic functions on the product $Q = S^1 \times \mathbb{R}^2$.
More precisely, divide $S^1$ into two segments of equal length, denoted by $I_1$ and $I_2$. These can be naturally viewed as subsets of $Q$. One can then construct Green's functions $u_\alpha$ ($\alpha = 1,2$) on $Q$ such that $u_\alpha$ is asymptotic to $-\log \rho$ near points in the interior $I_\alpha^\circ$ and also at infinity.
Using $u_1$ and $u_2$ to define $\Phi$, one can then solve for $\nu$ according to \eqref{e:eqn2.33}, which is uniquely determined up to an additive constant.
	
	This yields a toric Ricci-flat metric on 
$$
M^\circ = (\mathbb H^\circ / \mathbb Z) \times (\mathbb R^2 / \Lambda),
$$
where the lattice $\Lambda$ is generated by $(1,0)^\top$ and $(0,1)^\top$. As in the previous discussion, one obtains a toric gravitational instanton $(M,g)$ with, in general, cone singularities along two 2-spheres. The cone angles along the two rods are given by
$$
2\pi \lim_{\rho\rightarrow 0}e^{-\log \rho - \nu + \frac{1}{2}u_i}.
$$
By adjusting $u_i$ by suitable additive constants, one can arrange for the metric to extend smoothly.
	
	 The underlying manifold $M$ is diffeomorphic to $S^4\setminus T^2$. It has fundamental group isomorphic to $\dZ$. The universal cover $\widetilde M$ is an infinite connected sum of $S^2\times S^2$, which admits a complete Ricci-flat metric but it has infinite $b_2$ and infinite curvature energy. By taking finite covers of $M$ we can also get a gravitational instanton on the connected sum of an arbitrary number of $S^2\times S^2$'s. 
	 
The metric $g$ also has an interesting asymptotic, namely, it is asymptotic to the special Kasner metric  
$$dr^2+r^{-\frac{2}{3}}dx_1^2+r^{\frac{4}{3}}dx_2^2+r^{\frac{4}{3}}dx_3^2.$$
It has quadratic volume growth and exactly quadratic curvature decay.	

\section{Open questions}\label{s:open problems}
In this section, we list some open questions related to gravitational instantons. 

\subsection{Toric gravitational instantons}
Following the construction of  toric \(\AFa\) gravitational instantons in Theorem~\ref{t:adjusting angle}, it is a natural problem to develop a systematic understanding of general toric gravitational instantons.

\begin{problem}
Classify toric gravitational instantons.
\end{problem}

In particular, for other types of asymptotic behavior, one may ask:

\begin{question}
Are there Type $\III$ toric gravitational instantons (for both orientations)  with $\ALE$ or $\ALF$ asymptotics?
\end{question}

 It is well known that toric gravitational instantons give rise to \emph{integrable systems}, and one may hope to apply the inverse scattering methods to generate explicit solutions. This approach has already proved successful in the discovery of the Chen-Teo gravitational instantons. Note that the new gravitational instantons  in Theorem~\ref{t:adjusting angle} were constructed by a topological degree argument. This leads to:

\begin{question}
Do toric gravitational instantons always admit an explicit description?
\end{question}

A first step in this direction would be to recover the gravitational instantons constructed in Theorem~\ref{t:adjusting angle} by means of integrable systems, especially for those rod structures with low degree. There is the very recent work \cite{Teo2026NewAsymptoticallyFlatGravitationalInstanton} that provides an explicit construction for the $\AFa$ gravitational instantons constructed in \cite{LiSun2025} on $X_3$.

\subsection{Asymptotic classification}

\begin{problem}\label{prob1}
    Classify the asymptotic structure of gravitational instantons.
\end{problem}

In the non-collapsed setting, a gravitational instanton is known to be $\ALE$ by \cite{BKN}, so the main difficulty lies in the collapsed setting. As explained in this article, the asymptotic classification in the collapsed case has already been completed in the hyperk\"ahler and Hermitian settings. The Type $\II$ case can always be reduced to the Hermitian case, up to passing to a double cover. The Type $\I$ case can be reduced to the hyperk\"ahler case, up to passing to a finite cover, provided the fundamental group is finite. This naturally leads to the following question:

\begin{question}\label{q:Type I infinite fund}
Does there exist a non-flat Type $\I$ gravitational instanton with infinite fundamental group?
\end{question}

Observe that the known examples of Type $\III$ gravitational instantons, discussed in Section \ref{s:Type III}, are all asymptotic to Type $\I$ or Type $\II$ model ends (up to orientation reversal).  There are  many Ricci-flat ends with symmetry which are Type $\III$ (for both orientations), for example, the general Kasner family of ends given in \eqref{eqn:Kasner}. One may ask 

\begin{question}
    Is there a gravitational instanton which is asymptotic to a Type $\III$ model end? 
\end{question}

In general, one of the main difficulties in addressing Problem~\ref{prob1} is the lack of a geometric understanding of asymptotic cones. 

\begin{conj}\label{q:conicity}
An asymptotic cone of a gravitational instanton is always a metric cone.
\end{conj}

\begin{conj}\label{q:uniqueness of asymptotic cone}
A gravitational instanton has a unique asymptotic cone.
\end{conj}

There is also a topological finiteness question
\begin{conj}
A gravitational instanton must be of finite topological type, i.e.,  it  can compactified into a  smooth manifold with boundary.
\end{conj}

All of these questions are known to have affirmative answers for general complete Riemannian manifolds whose curvature decays \emph{faster} than quadratically, for example when \(|\Rm_g| = O(r^{-2-\epsilon})\) for some \(\epsilon > 0\); see \cite{Kasue, MashikoNaganoOtsuka2005AsymptoticCones, PT}. 
If, in addition, the end is assumed to be simply connected, then the asymptotic cone must be flat: it is \(\R^m\) when \(m = \dim M \neq 4\), and it is either \(\R^4\) or \(\R^3\) when \(m = 4\). This follows from the work of Petrunin-Tuschmann \cite{PT}, together with the proof of the Petrunin-Tuschmann conjecture in \cite{LS2024}, which excludes the two dimensional  half-plane \(\mathbb H\) when $m=4$.
There is also a partial result toward Problem~\ref{prob1} in \cite{ChenLi}, obtained under an additional asymptotic holonomy assumption and curvature decay.

\

The diversity of asymptotic models of gravitational instantons also generates various analytic questions, for example, Fredholm theory for natural elliptic operators and Hodge theory on gravitational instantons. Especially interesting is the case where the asymptotics is not standard, for example, in the $\ALG^*, \ALH^*, \AFa(\beta\notin\mathbb Q)$ cases. 

\begin{problem}
    Develop the elliptic theory for general asymptotics of gravitational instantons. 
\end{problem}

This will likely involve the techniques of microlocal analysis. See, for example, \cite{HHM04,mazzeo2025tianyaumetricsfredholmtheory} and references therein.

\subsection{Rigidity with fixed asymptotics}

Given a prescribed asymptotic structure at infinity, there are several longstanding rigidity questions. In the \(\ALE\) case, one has the conjecture of Bando-Kasue-Nakajima:

\begin{conj}[\cite{BKN}]
   An $\ALE$ gravitational instanton must be of Type $\I$, up to orientation reversal.
\end{conj}

There are some partial results on this question, see for example \cite{Nakajima1990SelfDualityALE,LockViaclovsky2016QuotientSingularities, Li1}. To understand $\ALE$ gravitational instantons of  Type $\II$ requires an extension of Theorem \ref{thm:Hermitian B} in the case when $\Gamma$ is not in $SU(2)$. 

\

In the \(\AF\) case, there is a \emph{Riemannian black hole uniqueness conjecture}, which has been discussed by many authors; see, for example, Gibbons \cite{Gibbons}, Lapedes \cite{Lapedes}, Yau (\cite{Yau1982problem}, Problem~116), and Aksteiner-Andersson-Dahl-Nilsson-Simon \cite{AADNS}. In general relativity, the black hole uniqueness conjecture asserts that the only asymptotically flat, stationary vacuum Einstein spacetime is the Kerr spacetime. 
The naive Riemannian analogue-namely, the uniqueness of Kerr metrics among \(\AF\) gravitational instantons, was disproved by the Chen-Teo metrics, and more generally by the metrics constructed in Theorem~\ref{t:adjusting angle}. A modified version of the Riemannian black hole uniqueness conjecture were proposed in \cite{LiSun2025}. 
\begin{BHUconjI}\label{BHUconjI}
    The Schwarzschild metric is, up to isometry, the unique non-flat $\AF_0$ gravitational instanton. 
\end{BHUconjI}
         
Evidence for this is provided by the uniqueness result of Mars-Simon \cite{MarsSimon} (with a mild extension in \cite{LiSun2025}), which implies that the above conjecture holds in the presence of an isometric semi-free \(S^1\)-action compatible with the \(\AF_0\) asymptotics. The proof relies on a conformal argument due to Bunting-Masood-ul-Alam \cite{BuntingMasood1987}, which reduces the problem to an application of the positive mass theorem.
\

There is a natural extension of the above conjecture to the \(\ALF\) case. Again, supporting evidence is available under the assumption of an \(S^1\)-symmetry.

\begin{conj}
The multi-Taub-NUT metrics and the Taub-Bolt metric are the only \(\ALF\)-\(A_k\) $(k\geq 0)$ gravitational instantons, up to isometry, scaling, and orientation reversal.
\end{conj}

\subsection{Topological obstructions and rigidity}
Besides the general topological constraints arising from nonnegative Ricci curvature, an obstruction specific to gravitational instantons is given by the Hitchin-Thorpe inequality. Recall that for a compact oriented Einstein 4-manifold \((M, g)\), we have
\begin{equation}
2\chi(M) + 3\tau(M) \geq 0.
\end{equation}
where $\tau(M)$ is the signature of $M$, and the equality holds if and only if $g$ is Ricci-flat and Type $\I$. This follows from a combination of the Gauss-Bonnet-Chern theorem and the signature formula
\begin{equation}
    \tau(M)=\frac{1}{12\pi^2}\int_M(|\W^+|^2-|\W^-|^2)\dvol_g.
\end{equation}
For gravitational instantons with given asymptotics, an expected general form of the Hitchin-Thorpe inequality is
\begin{equation}
2\bigl(\chi(M) - \delta(g)\bigr) + 3\bigl(\tau(M) - \eta(g)\bigr)\geq0.
\end{equation}
Here the correction term \(\delta(g)\) to \(\chi(M)\) is given in Lemma~\ref{lem:Gauss-Bonnet}, while the correction term \(\eta(g)\) to \(\tau(M)\) involves the limiting eta invariant; see, for example, \cite{Nakajima1990SelfDualityALE,DaiWei2007HitchinThorpe,LockViaclovsky2016QuotientSingularities, Chen-Li}. In particular, for the standard \(\ALE\), \(\ALF\), \(\ALG\), and \(\ALH\) asymptotics, these correction terms can be computed explicitly.

As a consequence, if \(X\) admits a hyperk\"ahler gravitational instanton with \(\ALs\) (\(\sharp = E, F, G, H\)) asymptotics, then any gravitational instanton on \(X\) with the same \(\ALs\) asymptotics must be hyperk\"ahler, up to orientation reversal. This also yields certain nonexistence results under prescribed asymptotics.

Motivated by this, it is a natural to ask similar rigidity questions. 

\begin{conj}\label{c:TB uniqueness with topology}
The Taub-Bolt metric is the unique \(\ALF\)-\(A_k\) gravitational instanton on the total space of \(\mathcal{O}(1)\) over $\mathbb{CP}^1$, up to isometry and scaling.
\end{conj}

\begin{conj}\label{BHUconjII}
For each \(\beta \in (0,1)\) and \(n \geq 1\), the metric constructed in Theorem~\ref{t:adjusting angle} is the unique \(\AFa\) gravitational instanton on \(X_n\), up to isometry.
\end{conj}

There are some partial results on these questions assuming an \(S^1\)-symmetry;  see \cite{AADNS}. 

   \

For manifolds with simple topology, one expects a general rigidity

\begin{conj}
    A gravitational instanton on a topological $\R^4$ must be either the flat metric or the Taub-NUT metric, up to isometry, scaling and orientation reversal. 
\end{conj}
 This is known to hold when the curvature decays faster than quadratic rate, by \cite{Chen-Li, LS2024}.

\begin{problem}
    Classify gravitational instantons with $b_1>0$. 
\end{problem}

This is related to Question \ref{q:Type I infinite fund}. The only known examples of non-flat gravitational instantons with $b_1>0$ are the ones in \cite{KRWY}.

\

Recall that for compact Einstein 4-manifolds, there are obstructions arising from smooth topology, for example through the Seiberg-Witten theory \cite{LeBrun1996,LeBrun2001Ricci,LeBrun1995Mostow,LeBrun1998Weyl,LeBrun2003DifferentialTopology,IshidaLeBrun2002Spin}. One may ask whether analogous phenomena occur in the noncompact setting.

\begin{problem}
Find smooth obstructions to the existence of gravitational instantons.
\end{problem}

\subsection{Mass and renormalized volume}

Since a gravitational instanton necessarily has vanishing scalar curvature, results concerning nonnegative scalar curvature impose constraints on gravitational instantons as well. The classical positive mass theorem applies to asymptotically Euclidean (\(\Ae\), i.e., \(\ALE\) with trivial \(\Gamma\)) manifolds \((M,g)\) with nonnegative scalar curvature. Since gravitational instantons exhibit a variety of interesting asymptotic geometries, it is natural to seek a notion of \emph{mass} that can be defined and computed in terms of the asymptotic structure.

\begin{problem}
    Formulate and prove a positive mass theorem for Riemannian metrics with the asymptotics of gravitational instantons. 
\end{problem}

There has already been a lot of progress in this direction; see, for example, \cite{Nakajima1990SelfDualityALE, Dahl1997, Dai2004,  Minerbe2009mass,  HeinLeBrun2016, LiuShiZhu2021,ChenLiuShiZhu2021, DaiSun2023, khuri2025masslowerboundsasymptotically,alaee2026comparison}. It is particularly interesting to understand the condition of non-negative Ricci curvature, as opposed to non-negative scalar curvature. 

In the classical setting, zero mass is achieved only by flat metrics. For the asymptotics of gravitational instantons, typically  flat models cannot be realized, and thus one expects the existence of a definite mass gap. This is analogous to the Penrose-type inequality in general relativity. In the Riemannian setting, the area of the horizon can be replaced by the area of certain minimal submanifolds.

\begin{problem}
  Formulate and prove a Penrose-type inequality characterizing  the Schwarzschild metric and the Kerr metrics.
\end{problem}
There are related results, see, for example \cite{HeinLeBrun2016, khuri2025masslowerboundsasymptotically}. 

\

In a different direction, recall that for $\ALE$ gravitational instantons, Biquard--Hein \cite{BiquardHein2023RenormalizedVolume} introduced a notion of \emph{renormalized volume} $\mathcal V$, using a canonical foliation of the end by constant-mean-curvature hypersurfaces (see also \cite{wang2025curvatureinfinityasymptoticallyflat} for a different description). This renormalized volume is a global geometric invariant of an $\ALE$ gravitational instanton. It is similar to the notion of renormalized volume for Poincar\'e-Einstein manifolds. 

\begin{question}
    Is there an analogous notion of renormalized volume for gravitational instantons with other asymptotics?
\end{question}

\subsection{Stability}
Gravitational instantons are critical points of the Einstein--Hilbert functional. For reasons arising from physics and from Ricci flow, it is important to investigate their stability.

\begin{problem}
   Investigate the stability of gravitational instantons.
\end{problem}

Note that, a priori, there may be several different notions of stability. There are many results on this problem; see, for example, \cite{DaiWangWei2005ParallelSpinors,Sesum2006LinearDynamical,DeruelleKroencke2021ALEStability,DeruelleOzuch2023ALEInstability,BiquardOzuch2023Instability,AnderssonAraneda2026,HaslhoferMueller2014RicciFlat,KimOzuch2025ALF}. In particular, it is shown by Biquard-Ozuch \cite{BiquardOzuch2023Instability} that Hermitian gravitational instantons are all linearly unstable. A natural question is whether the Type $\III$ gravitational instantons discussed in Section \ref{s:Type III} are stable.

\subsection{Gauge theory}
There are deep  connections between gravitational instantons and gauge theory, which we did not discuss in detail in this survey. A general problem is

\begin{problem}
Realize gravitational instantons as moduli spaces arising in gauge theory.
\end{problem}

For hyperk\"ahler gravitational instantons, there are explicit conjectures connecting to moduli spaces in gauge theory. On the one hand, there is a  modularity conjecture (\cite{Boalch2015OpenQuestions}) which predicts that any  hyperk\"ahler gravitational instanton with quadratic volume growth arises as some Hitchin moduli spaces (of Higgs bundles), see \cite{MazzeoSwobodaWeissWitt2016,MazzeoSwobodaWeissWitt2019,ChenLi,fredrickson2026alggravitationalinstantonshitchin} for progress in this direction. On the other hand,  hyperk\"ahler gravitational instantons are also expected to arise as suitable monopole moduli spaces, see \cite{CherkisKapustin2002, CherkisWard2012, HamanakaKannoMuranaka2014, MaldonadoWard2014, Cross2015, Foscolo2016, Foscolo2017Gluing, ThomasHarris2026} for conjectures and progress. 

\

It is also a natural question to study gauge theory over gravitational instantons. 

\begin{problem}
    Classify finite energy ASD (anti-self-dual) instantons over a given gravitational instanton. 
\end{problem}

For the flat $\R^4$ this is the famous ADHM construction. For $\ALE$ and $\ALF$ hyperk\"ahler gravitational instantons, there are extensive studies on this topic, see, for example \cite{KronheimerNakajima1990, Cherkis2011, CherkisLarrainHubachStern2021, CherkisLarrainHubachStern2024, CherkisLarrainHubachStern2026} and the references therein.

\subsection{Submanifold geometry}
Dually, it is natural to investigate special submanifolds, for example, complete or compact minimal submanifolds, submanifolds of constant mean curvature, etc,  in a gravitational instanton.

\begin{problem}
    Construct and classify special submanifolds in gravitational instantons.
\end{problem}

There is a long line of research on understanding stable minimal surfaces in Ricci-flat manifolds. For example, this is related to a question of Yau (\cite[Question 64]{Yau1993OpenProblems}), see, for example, \cite{Micallef1984,MicallefWolfson2006,foscolo2024unstableminimalspheresdegree1}.

\subsection{Moduli and bubbling}

Even though hyperk\"ahler gravitational instantons have been classified, it remains to understand the moduli compactification of these spaces, similar to studying the case of compact hyperk\"ahler metrics on the K3 manifolds in \cite{SZ}. See \cite{LinTakahashi2023ALHStar,LinSouZhu} for progress in this direction. 

\begin{problem}\label{p:moduli of GI}
    Study moduli compactification of hyperk\"ahler gravitational instantons and the connections to algebraic geometry. 
\end{problem}

Due to noncompactness, taking limits of gravitational instantons would inevitably involve choosing base points and scales. These will be important in understand general bubbling analysis for singularity formations. On the other hand, as is discussed in \cite{SZ}, it would be an interesting question to understand the multi-scale moduli of hyperk\"ahler metrics, either compact, or non-compact, encoding the bubbling information, and relate them to period domains (in the spirit of \cite{OdakaOshima2021CollapsingK3}).

\subsection{Relation with compact Einstein metrics}
As explained in Section \ref{ss:GI and K3}, hyperk\"ahler gravitational instantons can  arise from singularity formation of Ricci-flat metrics on the K3 manifold. More generally, Type $\I$ gravitational instantons may also come out of singularity formation of general compact K\"ahler-Einstein metrics in complex dimension 2. In the non-collapsing situation, which is automatic when the Ricci curvature is positive,  this is well-understood, see, for example,  \cite{OdakaSpottiSun2016DelPezzo, BiquardRollin2013Smoothing,deborbon2023modelsbubblinglogkahlereinstein}.  In the negatively curved case, collapsing may occur and one may see non-$\ALE$ gravitational instantons, see, for example, \cite{MR4680200,fu2025continuouscuspclosingprocess}. 

\begin{problem}
Understand the gravitational instantons bubbling off from degeneration of K\"
ahler-Einstein metrics with negative Ricci curvature.
\end{problem}

This would give extra geometric information on the KSBA moduli space for algebraic surfaces of general type.

\

In the more general Riemannian setting, a natural question is 

\begin{question}
    Can Type $\II$ or Type $\III$ gravitational instantons arise from singularity formation on compact Einstein metrics?
\end{question}

Note that currently we do not have many examples of compact Einstein metrics in 4 dimensions. In particular, all known examples with non-negative Einstein constants are of either Type $\I$ or Type $\II$ for a choice of orientation. Note that the notion of Type can be defined for general Einstein metrics.   Naturally we can ask 

\begin{question}
  Is there a compact Einstein 4-manifold with non-negative Einstein constant which is of Type $\III$ for both orientations? 
\end{question}

In the Ricci-flat case, this is related to a longstanding question of Page:

\begin{question}\label{q:page}
    Is there a compact simply-connected Ricci-flat 4-manifold which is not diffeomorphic to the K3 manifold?
\end{question}

Following a proposal of Gibbons-Pope and Page, there are extensive results towards understanding a potential variant of the Kummer construction, where one reverses the orientation of some of the Calabi-Eguchi-Hanson spaces, to construct a Ricci-flat manifold not diffeomorphic to the K3 manifold, see \cite{brendle2016gluing, ozuch2022higher}. These results are all negative, suggesting the difficulty of Question \ref{q:page}.

\subsection{Infinite energy}

In general, for a complete oriented Ricci-flat 4-manifold,  the second Betti number $b_2$ could be infinity. There are two classes of known examples, both given by explicit constructions. 

The first class of examples is given by Anderson-Kronheimer-LeBrun \cite{AKL}, which applies the Gibbons-Hawking ansatz to a positive harmonic function on $\R^3$ with a suitably chosen infinite set of singularities. In particular, these examples are hyperk\"ahler and they are often referred to as Type $A_\infty$. Their geometry is much more complicated than gravitational instantons.  In particular, it is known \cite{Hattori} that the asymptotic cones may not be unique and may not be metric cones. 

The second class of examples is given by the universal covers of the gravitational instantons of \cite{KRWY}, as discussed in Section \ref{ss:KWY}. 

\

A general conjecture is 

\begin{conj}[\cite{SZ}]
   A complete oriented Ricci-flat 4-manifold $(M, g)$ is a gravitational instanton if and only if $M$ has finite topological type. 
\end{conj}

This is related to 

\begin{question}
    Can an exotic $\R^4$ admit a complete Ricci-flat metric? 
\end{question}

It is also unknown whether an exotic $\R^4$ can admit a complete Riemannian metric with non-negative Ricci curvature.

\bibliographystyle{alpha}
\bibliography{ref.bib}

\end{document}